\newcommand{\ulp}{{\textup{(}}}
\newcommand{\urp}{{\textup{)}}}
\newcommand{\urps}{{\textup{)} }}
\newcommand{\lep}{\left (}
\newcommand{\rip}{\right )}
\newcommand{\lev}{\left\vert}
\newcommand{\riv}{\right\vert}
\newcommand{\leV}{\left\Vert}
\newcommand{\riV}{\right\Vert}
\newcommand{\leb}{\left\{\,}
\newcommand{\rib}{\,\right\}}
\newcommand{\lea}{\left\langle}
\newcommand{\ria}{\right\rangle}
\newcommand{\leh}{\left[}
\newcommand{\rih}{\right]}
\newcommand{\R}{{\mathbb{R}}}
\newcommand{\N}{{\mathbb{N}}}
\newcommand{\rest}[1]{{\restriction_{#1}}}
\newcommand{\supp}{{\textup{supp\,}}}
\newcommand{\rbs}{{\pi}}
\newcommand{\rbt}{{\pi}}
\newcommand{\rct}{{\pi}}
\newcommand{\erbt}{{\rbt_\proj}}
\newcommand{\erbttemp}{{\tilde\rbt}}
\newcommand{\ts}{X}
\newcommand{\tselt}{\xi}
\newcommand{\set}{X}
\newcommand{\setelt}{\xi}
\newcommand{\bl}{E}
\newcommand{\elt}{x}
\newcommand{\centersymbol}{Z}
\newcommand{\bs}{E}
\newcommand{\dual}[1]{#1^\prime}
\newcommand{\bidual}[1]{#1^{\prime\prime}}
\newcommand{\dualelt}{\dual{\elt}}
\newcommand{\dualbs}{\dual{\bs}}
\newcommand{\dualbl}{\dual{\bl}}
\newcommand{\bidualbl}{\bidual{\bl}}
\newcommand{\centerbl}{\centersymbol(\bl)}
\newcommand{\positive}[1]{#1_+}
\newcommand{\negative}[1]{#1_-}
\newcommand{\adjoint}[1]{#1^\prime}
\newcommand{\commutant}[1]{#1^\prime}
\newcommand{\bicommutant}[1]{#1^{\prime\prime}}
\newcommand{\blpos}{\positive{\bl}}
\newcommand{\dualblpos}{\positive{\dualbl}}
\newcommand{\condelt}{\elt\in\bl}
\newcommand{\condeltpos}{\elt\in\blpos}
\newcommand{\conddualelt}{\dualelt\in\dualbl}
\newcommand{\condboth}{\elt\in\bl,\dualelt\in\dualbl}
\newcommand{\condbothsp}{\elt\in\bl,\,\dualelt\in\dualbl}
\newcommand{\condbothpossp}{\elt\in\blpos,\,\dualelt\in\dualblpos}
\newcommand{\standardpair}{\elt,\dualelt}
\newcommand{\pairing}[1]{\lea #1\ria}
\newcommand{\idmap}{\textup{\bf id}}
\newcommand{\onefunction}{\mathbf 1}
\newcommand{\zerofunction}{0}
\newcommand{\zerofunctionset}{\zerofunction}
\newcommand{\onefunctionset}{\onefunction_\set}
\newcommand{\zerofunctionts}{\zerofunction}
\newcommand{\onefunctionts}{\onefunction_\ts}
\newcommand{\linear}{\mathcal L}
\newcommand{\bounded}{\mathcal L}
\newcommand{\regular}{\linear_{\textup{r}}}
\newcommand{\boundedop}[1]{\bounded(#1)}
\newcommand{\regularop}[1]{\regular(#1)}
\newcommand{\boundedbs}{\boundedop{\bs}}
\newcommand{\boundedbl}{\boundedop{\bl}}
\newcommand{\regularbl}{\regularop{\bl}}
\newcommand{\norm}[1]{{\leV #1\riV}}
\newcommand{\regnorm}[1]{{\leV #1\riV}_{\textup r}}
\newcommand{\abs}[1]{\left\vert #1\right\vert}
\newcommand{\normfunction}{\norm{\,.\,}}
\newcommand{\regnormfunction}{\regnorm{\,.\,}}
\newcommand{\boundedblwithnorm}{\lep\boundedbl,\normfunction\rip}
\newcommand{\regularblwithnorm}{\lep\regularbl,\normfunction\rip}
\newcommand{\regularblwithregnorm}{\lep\regularbl,\regnormfunction\rip}
\newcommand{\mset}{\Delta}
\newcommand{\proj}{P}
\newcommand{\maxprojset}{\proj(\set)}
\newcommand{\maxprojts}{\proj(\ts)}
\newcommand{\mupair}[1]{\mu_{#1}}
\newcommand{\mustandardpair}{\mupair{\standardpair}}
\newcommand{\intts}{\int_\ts}
\newcommand{\cont}[1]{C(#1)}
\newcommand{\conto}[1]{C_0(#1)}
\newcommand{\contc}[1]{C_{\textup c}(#1)}
\newcommand{\contts}{\cont{\ts}}
\newcommand{\contots}{\conto{\ts}}
\newcommand{\contcts}{\contc{\ts}}
\newcommand{\sigmaset}{\Omega}
\newcommand{\sigmats}{{\Omega}}
\newcommand{\boundedletter}{\mathcal B}
\newcommand{\simpleletter}{\mathcal S}
\newcommand{\simpleset}{\simpleletter(\set)}
\newcommand{\simplets}{\simpleletter(\ts)}
\newcommand{\boundedset}{\boundedletter(\set)}
\newcommand{\boundedts}{\boundedletter(\ts)}
\newcommand{\boundedregmeas}[1]{M_{\textup b}(#1)}
\newcommand{\boundedregmeasts}{\boundedregmeas{\ts}}
\newcommand{\charfunc}{\chi}
\theoremstyle{plain}
\newtheorem{theorem}{Theorem}[section]
\newtheorem{proposition}[theorem]{Proposition}
\newtheorem{lemma}[theorem]{Lemma}
\newtheorem{corollary}[theorem]{Corollary}
\theoremstyle{definition}
\newtheorem{definition}[theorem]{Definition}
\newtheorem{remark}[theorem]{Remark}
\numberwithin{equation}{section}
\begin{document}


\title [Positive representations of $C_0(X)$]{Positive representations of $\boldsymbol{C_0(X)}$.\! I.}

\author{Marcel de Jeu}
\address{Marcel de Jeu, Mathematical Institute, Leiden University, P.O.\ Box 9512, 2300 RA Leiden, The Netherlands}
\email{mdejeu@math.leidenuniv.nl}

\author{Frejanne Ruoff}
\address{Frejanne Ruoff}
\email{frejanne.ruoff@gmail.com}

\subjclass[2010]{Primary 46H25; Secondary 46B42, 46G10, 47A67}

\keywords{Positive representation, Banach lattice, KB-space, positive spectral measure, locally compact Hausdorff space}


\begin{abstract}
We introduce the notion of a positive spectral measure on a $\sigma$-algebra, taking values in the positive projections on a Banach lattice. Such a measure generates a bounded positive representation of the bounded measurable functions.
If $X$ is a locally compact Hausdorff space, and $\pi$ is a positive representation of $C_0(X)$ on a KB-space, then $\pi$ is the restriction to $C_0(X)$ of such a representation generated by a unique regular positive spectral measure on the Borel $\sigma$-algebra of $X$. The relation between a positive representation of $C_0(X)$ on a Banach lattice and\textemdash if it exists\textemdash a generating positive spectral measure on the Borel $\sigma$-algebra is further investigated; here and elsewhere phenomena occur that are specific for the ordered context.
\end{abstract}

\maketitle


\section{Introduction and overview}\label{sec:intro}

Suppose that  $\ts$ is a locally compact Hausdorff space, with $\contots$ denoting the ordered Banach algebra of all real-valued continuous functions vanishing at infinity. Positive representations of such an algebra $\contots$ on a Banach lattice $\bl$, i.e.\ representations such that positive functions act as positive operators on $\bl$, are quite common. Rather trivially, $\contots$ acts positively by multiplication on many Banach lattices of (equivalence classes of) functions on $\ts$. Somewhat in disguise, since the center $\centerbl$ of an arbitrary Banach lattice is such a space for compact $\ts$, there is also a positive representation of this type associated with every Banach lattice. Such positive representations also occur in a very general context where ordering is not present from the start: If $\bs$ is an arbitrary Banach space, $\ts$ is compact, and $\rct$ is a bounded unital representation of $\contots$ on $\bs$, then every cyclic closed subspace of $\bs$ can be supplied with an ordering and an equivalent norm so that it becomes a Banach lattice on which $\contots$ acts positively, cf.\ \cite[Lemma~4.6]{abrarekit} or  \cite[Proposition~2.5]{pagric}; according  to \cite[Proof of Lemma~4.6]{abrarekit} this result goes back to \cite{kai}.

Given the ubiquity of their occurrence, it is natural to ask which positive representations of $\contots$ on a Banach lattice $\bl$ are generated by a (regular) positive spectral measure on the Borel $\sigma$-algebra $\sigmats$ of $\ts$; here `positive' refers to the fact that the  measure takes values in the positive projections on $\bl$. It is one of our main results that this is always the case if $\bl$ is a KB-space, cf.\ Theorem~\ref{thm:generating_measure_KB}.

The study of positive spectral measures and of positive representations of $\contots$ (and also that of positive representations of algebras of bounded measurable functions) appears to be new. The main related results in the literature that we are aware of are concerned with bounded unital representations of $\contots$ for compact $X$ on a Banach space.  Thus the class of representation spaces is more general, but the class of representations is more restrictive:  our $\ts$ need not be compact, and, even if it is, our representation need not be unital. Of course, one can extend a bounded representation $\rct$ of $\contots$ for non-compact $X$ to a bounded unital representation $\rct_\infty$ of $\conto{\ts_\infty}$, where $\ts_\infty$ is the one-point compactification of $\ts$. Thus one places oneself in the convenient unital and compact framework, but removing (and hence also masking) all possible non-degeneracy of the original representation from the very start seemed counterproductive to us. Hence we have considered the most general case, which for positive representations in Banach lattices turns out to be feasible.

The presence of ordered structures brings about several new phenomena as compared to the Banach space context, some of them already at a very basic level. For example, boundedness of positive representations tends to be automatic, and it is often possible to give exact expressions for their norms (cf.\ Propositions~\ref{prop:automatic_continuity_set}, ~  \ref{prop:automatic_continuity_ts}, and \ref{prop:norm_for_generated_representation}). Furthermore, relations of a type that is particular to the ordered context make their appearance. For example, if $\proj$ is a positive spectral measure on $\sigmats$ that is regular in the (usual) sense of Definition~\ref{def:regular_spectral_measure}, and $\mset$ is a Borel set, then
\[
\proj(\mset)=\inf\leb \proj(V): V \textup{ open and }\mset\subset V\rib
\]
in the regular operators on $\bl$, cf.\ Proposition~\ref{prop:regularity_of_the_spectral_measure}. Therefore $\proj$ is outer regular as a map from the $\sigmats$ to the regular operators on $\bl$; likewise, $\proj$ is inner regular in this sense. In a similar vein, if $\pi$ is a positive representation of $\contots$ on $\bl$ that has a generating regular positive spectral measure $\proj$ on $\sigmats$, and $V$ is open, then
\begin{equation*}
\proj(V)=\sup \leb\rct(\phi) : \phi\in\contcts,\,\supp \phi\subset V,\, \zerofunctionts\leq \phi\leq\onefunctionts\rib
\end{equation*}
in the regular operators on $\bl$, cf.\ Theorem~\ref{thm:construct_everything_from_rcts} for this and related results. As in the previous example, this statement, which is reminiscent of (and stems from) a well known formula in the circle of ideas around the Riesz representation theorem, is only meaningful in an ordered context, as it tells us how to determine $\proj$ from $\rct$ in a purely order-theoretical manner.

\medskip

The reader who is familiar with the theory of spectral measures in Banach spaces will without difficulty recognise several ideas and techniques employed in this paper. We have, nevertheless, still included the detailed proofs of our results, as these proofs are inspired by, but not identical, to those in the general case. The ordered context has its own peculiarities from the very start, and it seemed to us that the only way to exploit this extra information to its full potential, while still producing a readable paper, was to make a fresh start and give precise arguments. It has the additional advantage that it makes the paper essentially self-contained. We have given pertinent references to the general theory to the best of our abilities.

\medskip
This paper is organised as follows.

Section~\ref{sec:preliminaries} contains the basis notation and terminology, as well as an overview of material around the Riesz representation theorem. We need more than the mere existence of measures representing bounded linear functionals, and we have included what we need for the ease of reference, and also to establish terminology (which is not uniform throughout the literature).

Section~\ref{sec:positive_spectral_measures} introduces positive spectral measures on arbitrary $\sigma$-algebras, and contains some first basic results.

In Section~\ref{sec:from_positive_spectral_measure_to_positive_representation} a bounded positive representation of the bounded measurable functions on a measurable space is constructed from a positive spectral measure. Any positive representation of this algebra is, in fact, always bounded, but when is there an underlying positive spectral measure?
If $\bl$ has $\sigma$-order continuous norm, then we can characterise the positive representations thus obtained: they are precisely the $\sigma$-order continuous ones.  It is not true that an arbitrary positive representation of the bounded measurable functions has a generating positive spectral measure on the pertinent $\sigma$-algebra; this will be taken up in \cite{dejjia}.

Section~\ref{sec:existence_of_generating_regular_positive_spectral_measure} focusses on the topological context of a locally compact Hausdorff space $\ts$. We are concerned with automatic continuity of positive representations of $\contots$, and show that every positive representation of $\contots$ on a KB-space has a generating regular positive spectral measure on $\sigmats$, cf.\ Theorem~\ref{thm:generating_measure_KB}. It is indicated in Remark~\ref{rem:alternative_approaches} how this result can also be derived using Banach space results and Banach space properties of KB-spaces. This alternative approach is more involved and lacks the order-theoretical flavour of ours. Additional regularity properties of a regular positive spectral measures are investigated, as well as the relation between a positive representation of $\contots$ and\textemdash if there is one\textemdash a generating possibly regular positive spectral measure on $\sigmats$ for that representation.

\medskip

This paper, which we consider to be part of the groundwork for the theory of positive representations of $\contots$, is expected to have a sequel \cite{dejjia}. There are still several issues to be investigated. The relation between the degeneracy of the representation of $\pi$ and the projection corresponding to the element $\ts$ of $\sigmats$ is one of these; Remarks~\ref{rem:preview_1} and \ref{rem:preview_2} contain additional ones. Furthermore, it is interesting to turn the tables: if $\bl$ is a Banach lattice such that every positive representation of every $\contots$-space (where $\ts$ is a locally compact Hausdorff space) on $\bl$ has a generating regular positive spectral measure on the pertinent Borel $\sigma$-algebra, then what can one say about $\bl$? Must $\bl$ be a KB-space, i.e.\ is Theorem~\ref{thm:generating_measure_KB} optimal?

For an at this moment more distant perspective, we recall that the spectral theorem for representations of commutative $C^*$-algebras on Hilbert spaces underlies, in the end, a structure theorem (even a classification theorem) for an arbitrary normal operator on a separable Hilbert space, cf.\ \cite[Theorem~IX.10.1]{con}. Although still a considerable effort would be needed, and success is not guaranteed (presumably one would need a reasonable analogue of von Neumann algebras), it could be hoped that, analogously, Theorem~\ref{thm:generating_measure_KB} could be the basis for a structure theorem for an arbitrary orthomorphism on suitable KB-spaces.

\section{Preliminaries}\label{sec:preliminaries}

We start by collecting some basic notation and facts, as well as giving a precise overview of the Riesz representation theorem and the facts surrounding it.

\subsection{Basics}\label{subsec:basics}
All vector spaces in this paper are over the real numbers. If $V$ is an ordered vector space, we write $\positive{V}$ and $\negative{V}$ for its positive and negative cone, respectively.

If $\bs$ is a Banach space, then $\dualbs$ denotes its norm dual. We write $\boundedbs$ for the bounded linear operators on $\bs$, and $\idmap_\bs$ for the identity map on $\bs$. If $\bl$ is a Banach lattice, then $\regularbl$ will denote the vector space of regular operators on $\bl$, i.e.\ the linear operators on $\bl$ that can be written as a difference of two positive linear operators on $\bl$. It is well known that every positive linear operator on $\bl$ is bounded, so that $\regularbl\subset\boundedbl$, and that, for $T\geq 0$, $\norm{T}=\sup\leb\norm{Tx} : \elt\in\blpos,\norm{x}\leq 1\rib$, where $\blpos$ is the positive cone of $\bl$ \cite[Proposition~1.3.5]{mey}. Consequently, the operator norm is monotone on the positive cone $\positive{\regularbl}$ of $\regularbl$: if $T_1,T_2\in\regularbl$ and $0\leq T_1\leq T_2$, then $\norm{T_1}\leq\norm{T_2}$.

If $\bl$ is a Dedekind complete Banach lattice, then $\regularbl$ is a Dedekind complete Banach lattice when supplied with the natural ordering and the regular norm $\regnormfunction$, defined by $\regnorm{T}=\norm{\abs{T}}$ ($T\in\regularbl$), cf.\ \cite[Theorem~4.74]{aliburpos}. If $\bl\neq\leb 0\rib$ the inclusion map $\regularblwithregnorm\hookrightarrow\boundedblwithnorm$ has norm 1 \cite[p.~255]{aliburpos}.

If $F$ is a normed space, and $\bl$ is a Banach lattice, then the norm of a bounded linear map $\rbs \colon F\to\boundedblwithnorm$ or $\rbs \colon F\to\regularblwithnorm$ will be denoted by $\norm{\rbs}$. If $\bl$\ is Dedekind complete, then the norm of a bounded linear map $\rbs \colon F\to\regularblwithregnorm$ will be denoted by $\regnorm{\rbs}$.

\begin{definition}
If $A$ is an ordered algebra, and $\bl$ is a Banach lattice, then a \emph{positive representation of $A$ on $\bl$} is an algebra homomorphism $\rbs \colon A \to\boundedbl$ such that $\rbs(\positive{A})\subset\positive{\regularbl}$. If $A$ is normed, we do not require $\rbs$ to be bounded. If $A$ is unital, we do not require $\rbs$ to be unital. The linear span of $\{\,\rbs(a)x : a\in A,\,x\in\bl \,\}$ need not be dense in $\bl$, i.e.\ the representation can be degenerate.
\end{definition}

\begin{remark}\label{rem:preview_1}
In the cases of our interest, the ordered algebra $A$ is in fact a lattice, so that a positive representation of $A$ on $\bl$ maps $A$ into $\regularbl$. If $\bl$ is Dedekind complete, then $\regularbl$ is also a lattice, and it is meaningful to require that $\pi \colon A \to\regularbl$ is a lattice homomorphism. We emphasize that this is not required in the present paper, but in \cite{dejjia} we shall investigate to which extent this is automatically the case.
\end{remark}

If $V$ is a not necessarily order complete ordered vector space, and $(v_n)_{n=1}^\infty\subset V$, then we shall write $v_n\uparrow v$ if $v_n\leq v_{n+1}$ ($n\geq 1$) and $v=\sup_n v_n$.

If $\set$ is a set, then $\onefunctionset$ will denote the constant function 1 on $\set$. If $\sigmaset$ is a $\sigma$-algebra of subsets of $\set$, then $\boundedset$ will denote the bounded $\sigmaset$-measurable functions on $\set$. It is a Banach lattice algebra, and we let $\simpleset$ denote the lattice subalgebra of simple functions. The order bounded subsets of $\boundedset$ are precisely the norm bounded ones. If $\lep\phi_n\rip_{n=1}^\infty\subset\boundedset$ and $\phi\in\boundedset$, then $\phi_n\uparrow\phi$ in $\boundedset$ if and only if $\phi_n(\setelt)\uparrow\phi(\setelt)$ for all $\setelt\in\set$.

The supremum norm of $\phi\in\boundedset$ is written as $\norm{\phi}$.

\subsection{Regular Borel measures}
If $\ts$ is a locally compact Hausdorff space, we let $\sigmats$ denote the Borel $\sigma$-algebra generated by the open sets, and $\boundedts$ the Banach lattice algebra of bounded Borel measurable functions on $\ts$. We write $\contcts$ for the normed lattice algebra of continuous functions on $\ts$ with compact support, and $\contots$ for the Banach lattice algebra of continuous functions vanishing at infinity. If $\ts$ is compact, we shall still write $\contots$ rather than $\contts$ for the sake of uniform terminology and notation.

The results in Section~\ref{sec:existence_of_generating_regular_positive_spectral_measure} rely heavily on the Riesz representation theorem and the general theory of regular Borel measures on locally compact Hausdorff spaces. Since the terminology in this field is not entirely standardised, and we need a bit more than the bare minimum, we give precise definitions and an overview of what we need.

Adapting the terminology from \cite[p.~352]{aliburpri}, we say that a measure $\mu \colon \sigmats \to [0,\infty]$ is:
 \begin{enumerate}
 \item a \emph{Borel measure} if  $\mu(K)<\infty$ for all compact $K\subset\ts$;
 \item \emph{outer regular on $\mset\in\sigmats$} if $\mu(\mset)=\inf\leb \mu(V): V \textup{ open and }\mset\subset V\rib$;
 \item \emph{inner regular on $\mset\in\sigmats$} if $\mu(\mset)=\sup\leb \mu(K): K \textup{ compact and }K\subset\mset\rib$;
 \item \emph{a regular Borel measure} if it is  a Borel measure that is outer regular on all $\mset\in\sigmats$ and inner regular on all open subsets of $\ts$.
 \end{enumerate}
  The nomenclature is not uniform in the literature; sometimes the inner regularity on all elements of $\sigmats$ rather than on just the compact subsets is incorporated in the definition of a regular Borel measure, as in \cite[p.~212]{fol}. In \cite[p.~212]{fol} our regular Borel measures are called Radon measures.

  We let $\boundedregmeasts$ be the regular finite signed Borel measures on $\sigmats$, i.e.\ the finite signed measures on $\sigmats$ that can be written as a difference of two regular finite Borel measures. Then $\boundedregmeasts$ is a Banach lattice when supplied with the natural ordering and the total variation norm. If $\mu\in\boundedregmeasts$, then the linear functional $I_\mu \colon \contots \to\R$, defined by $I_\mu(\phi)=\intts\phi\,d\mu$ ($\phi\in\contots$), is bounded.

Most of the results we need are collected in the following overview theorem. Part (1) is the combination of \cite[Theorem~38.7]{aliburpri} and \cite[Theorem~7.17]{fol}; part (2) follows from \cite[Corollary~7.6]{fol}; part (3) is contained in \cite[Theorem~7.2]{fol}.

\begin{theorem}\label{thm:overview_theorem}
 Let $\ts$ be a locally compact Hausdorff space with Borel $\sigma$-algebra $\sigmats$. Then:
 \begin{enumerate}
  \item \ulp Riesz representation theorem\urps The map $\mu\mapsto I_\mu$ is an isometric order isomorphism between the Banach lattices $\boundedregmeasts$ and $\dual{\contots}$.
  \item If $\mu\in\positive{\boundedregmeasts}$, then $\mu$ is inner regular on all elements of $\sigmats$.
  \item  If $\mu \colon \sigmats \to [0,\infty]$ is a regular Borel measure on $\sigmats$, and $V$ is an open subset of $\ts$, then
  \begin{equation}\label{eq:open_versus_functions}
  \mu(V)=\sup\leb\intts\phi\,d\mu:\phi\in\contcts,\,\supp\phi\subset V,\, \zerofunctionts\leq\phi\leq\onefunctionts\rib,
  \end{equation}
  and if $K$ is a compact subset of $\ts$, then
  \begin{equation}\label{eq:compact_versus_functions}
\mu(K)=\inf \leb\intts\phi\,d\mu : \phi\in\contcts,\,\phi\geq\charfunc_K\rib.
\end{equation}
\end{enumerate}
\end{theorem}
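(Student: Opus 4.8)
These are all classical facts from the theory of regular Borel measures, and the plan is to assemble the three parts from the Riesz--Markov--Kakutani representation theorem together with standard regularity arguments. For part~(1) I would start from the positive version of the theorem: every positive linear functional on $\contcts$, equivalently on $\contots$, is integration against a unique positive regular Borel measure, which is finite exactly when the functional is bounded. The map $\mu\mapsto I_\mu$ is then additive and positively homogeneous on $\positive{\boundedregmeasts}$ and injective, since a finite regular Borel measure is determined by its integrals against $\contcts$. To reach the signed and order-theoretic statement I would use that $\dual{\contots}$ is a Banach lattice in which every bounded functional splits as a difference of two positive ones; applying the positive correspondence to each summand and invoking uniqueness produces a bijection $\boundedregmeasts\to\dual{\contots}$ that preserves the ordering in both directions, and hence the lattice operations. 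For the isometry, for $\mu\in\positive{\boundedregmeasts}$ one has $\norm{I_\mu}=\mu(\ts)=\norm{\mu}$, obtained by approximating $\onefunctionts$ from below by functions in $\contcts$ with values in $[0,1]$; the general case then follows from $\norm{I_\mu}=\norm{\abs{I_\mu}}=\norm{I_{\abs{\mu}}}=\abs{\mu}(\ts)=\norm{\mu}$, using that the bijection is a lattice isomorphism.

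For part~(2) the point is to upgrade inner regularity from open sets, which is all that the definition of a regular Borel measure provides, to arbitrary Borel sets, exploiting finiteness. Given $\mset\in\sigmats$ and $\varepsilon>0$, I would use outer regularity to pick an open $V\supset\mset$ with $\mu(V)\le\mu(\mset)+\varepsilon$, then inner regularity on the open set $V$ to pick a compact $F\subset V$ with $\mu(F)\ge\mu(V)-\varepsilon$, and finally outer regularity on the Borel set $V\setminus\mset$ to pick an open $W\supset V\setminus\mset$ with $\mu(W)\le\varepsilon$. Then $K:=F\setminus W$ is compact and contained in $\mset$, and a short computation using finiteness gives $\mu(\mset)-\mu(K)\le 3\varepsilon$, which is the desired inner regularity on $\mset$.

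For part~(3) both identities follow from the regularity of $\mu$ and Urysohn's lemma for locally compact Hausdorff spaces, the nontrivial inequality in each being supplied by regularity. In the open-set formula every admissible $\phi$ satisfies $\phi\le\charfunc_V$, so its integral is at most $\mu(V)$ and the supremum cannot exceed $\mu(V)$; for the reverse inequality I would use inner regularity on $V$ to approximate $\mu(V)$ by $\mu(K)$ over compact $K\subset V$, and for each such $K$ use Urysohn to produce $\phi\in\contcts$ with $\charfunc_K\le\phi$, $\supp\phi\subset V$ and $\zerofunctionts\le\phi\le\onefunctionts$, so that $\intts\phi\,d\mu\ge\mu(K)$. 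Dually, in the compact-set formula every $\phi\ge\charfunc_K$ has integral at least $\mu(K)$, so the infimum is at least $\mu(K)$; for the reverse inequality I would use outer regularity to approximate $\mu(K)$ by $\mu(V)$ over open $V\supset K$, producing via Urysohn a $\phi\in\contcts$ with $\charfunc_K\le\phi\le\charfunc_V$, whence $\intts\phi\,d\mu\le\mu(V)$.

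The substance of the theorem is concentrated in part~(1): the full Riesz--Markov--Kakutani theorem and the identification of the functional norm with the total variation norm are the genuinely deep ingredients. As the discussion preceding the statement stresses, however, the main practical obstacle is the non-standardised terminology: one must match the precise regularity hypotheses across the references so that exactly the stated isometric order isomorphism is obtained and so that the finiteness-based upgrade of part~(2) is available. Parts~(2) and~(3) themselves are routine once finiteness and Urysohn's lemma are in hand.
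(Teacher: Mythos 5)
Your arguments are essentially correct, but they take a different route from the paper for a simple reason: the paper does not prove this theorem at all. It is explicitly an overview statement assembled from the literature, with part (1) cited as the combination of Theorem~38.7 in Aliprantis--Burkinshaw and Theorem~7.17 in Folland, part (2) as a consequence of Folland's Corollary~7.6, and part (3) as contained in Folland's Theorem~7.2. What you have written out are, in effect, the standard textbook proofs behind those citations: positive Riesz--Markov--Kakutani plus Jordan decomposition and order-isomorphism reasoning for (1), the finiteness-based compact-exhaustion argument for (2), and Urysohn sandwiching for (3). Your version buys self-containedness; the paper's version buys brevity and, as you yourself note, precision about which regularity conventions are in force --- which is exactly why the authors point at specific numbered theorems rather than reprove them.

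Two details in your sketch deserve attention if it were to be expanded into a full proof. First, in part (1) you implicitly use that the Jordan parts $\mu^\pm$ of a regular finite signed Borel measure are themselves regular (equivalently, that $\abs{\mu}$ is regular); this is needed both for injectivity of $\mu\mapsto I_\mu$ on $\boundedregmeasts$ and for the step $\norm{I_{\abs{\mu}}}=\abs{\mu}(\ts)$, and it is a lemma requiring proof, not a definition. Second, in part (2) your claim that outer regularity yields an open $W\supset V\setminus\mset$ with $\mu(W)\leq\varepsilon$ is slightly too strong --- outer regularity only gives $\mu(W)\leq\mu(V\setminus\mset)+\varepsilon\leq 2\varepsilon$ --- but your final bound $\mu(\mset)-\mu(K)\leq 3\varepsilon$ absorbs the correct bookkeeping, so this is a cosmetic slip rather than a gap.
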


In \eqref{eq:compact_versus_functions}, and analogously elsewhere, $\charfunc_K$ denotes the characteristic function of the subet $K$ of $\set$.

The following fact is implied by \cite[Problem~38.12]{aliburpro}.

\begin{lemma}\label{lem:again_regular_measure}
Let $\ts$ be a locally compact Hausdorff space with Borel $\sigma$-algebra $\sigmats$. Let $\mu\in\boundedregmeasts$ and $\psi\in\boundedts$, and define $\mu^ \psi \colon \sigmats\to\R$ by $\mu^\psi(\mset)=\int_\mset \psi\,d\mu$ \ulp$\mset\in\sigmats$\urp. Then $\mu^\psi\in\boundedregmeasts$.
\end{lemma}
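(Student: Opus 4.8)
The plan is to reduce, using the bilinearity of $(\mu,\psi)\mapsto\mu^\psi$ together with a density argument, to the single case where $\mu$ is a positive regular Borel measure and $\psi$ is the characteristic function of a Borel set. First I would observe that $\mu^\psi$ is in any case a finite signed Borel measure: countable additivity follows from the dominated convergence theorem, and $\abs{\mu^\psi(\mset)}\leq\norm{\psi}\,\abs{\mu}(\ts)$ for all $\mset\in\sigmats$ since $\psi$ is bounded. The only point at issue is thus the regularity of $\mu^\psi$. Because the Jordan decomposition $\mu=\positive{\mu}-\negative{\mu}$ takes place inside the Banach lattice $\boundedregmeasts$, both $\positive{\mu}$ and $\negative{\mu}$ are regular finite positive Borel measures; as $\mu^\psi=(\positive{\mu})^\psi-(\negative{\mu})^\psi$, it suffices to treat $\mu\in\positive{\boundedregmeasts}$.

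Fixing such a $\mu$, I would first settle the base case $\psi=\charfunc_A$ for $A\in\sigmats$, where $\mu^{\charfunc_A}(\mset)=\mu(\mset\cap A)$; this is evidently a positive finite Borel measure, and the claim is that it is regular. For outer regularity on a given $\mset\in\sigmats$, outer regularity and finiteness of $\mu$ provide, for each $\varepsilon>0$, an open $V\supseteq\mset$ with $\mu(V\setminus\mset)<\varepsilon$, whence $\mu^{\charfunc_A}(V)-\mu^{\charfunc_A}(\mset)=\mu\bigl((V\setminus\mset)\cap A\bigr)\leq\mu(V\setminus\mset)<\varepsilon$. For inner regularity on an open set $V$, Theorem~\ref{thm:overview_theorem}(2) shows that $\mu$ is inner regular on the Borel set $V\cap A$, so there is a compact $K\subseteq V\cap A$ with $\mu(K)>\mu(V\cap A)-\varepsilon$; since $K\subseteq A$ and $K\subseteq V$, this yields $\mu^{\charfunc_A}(K)=\mu(K)>\mu^{\charfunc_A}(V)-\varepsilon$. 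Hence $\mu^{\charfunc_A}$ is a regular Borel measure, i.e.\ $\mu^{\charfunc_A}\in\positive{\boundedregmeasts}$, and by linearity $\mu^\psi\in\boundedregmeasts$ for every $\psi\in\simplets$.

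To pass from simple functions to arbitrary $\psi\in\boundedts$, I would argue that $\leb\psi\in\boundedts : \mu^\psi\in\boundedregmeasts\rib$ is a closed linear subspace of $\boundedts$. Indeed, if $\psi_n\to\psi$ uniformly with each $\mu^{\psi_n}\in\boundedregmeasts$, then the total variation norm of $\mu^{\psi_n}-\mu^{\psi_m}=\mu^{\psi_n-\psi_m}$ equals $\intts\abs{\psi_n-\psi_m}\,d\mu\leq\norm{\psi_n-\psi_m}\,\mu(\ts)$, so $(\mu^{\psi_n})_{n=1}^\infty$ is Cauchy in total variation norm and therefore converges in the complete space $\boundedregmeasts$ to some $\nu$; comparing with the setwise limit $\mu^{\psi_n}(\mset)\to\mu^\psi(\mset)$ gives $\nu=\mu^\psi\in\boundedregmeasts$. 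Since this closed subspace contains all of $\simplets$, which is dense in $\boundedts$, it equals $\boundedts$. Together with the first reduction, this proves the lemma.

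The step I expect to be the main obstacle is the base case, namely that the restriction $\mset\mapsto\mu(\mset\cap A)$ of a regular positive measure to a Borel set $A$ is again regular. Outer regularity transfers in a routine manner, but inner regularity on open sets is precisely where one must invoke the non-trivial fact, recorded in Theorem~\ref{thm:overview_theorem}(2), that a positive regular Borel measure is inner regular on \emph{all} Borel sets, not merely on open or compact ones. The remaining ingredients---the Jordan decomposition, the completeness of $\boundedregmeasts$ in the total variation norm, and the density of the simple functions---are formal.
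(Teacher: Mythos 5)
Your proof is correct. There is, however, nothing in the paper to compare it against step by step: the paper offers no proof of this lemma at all, stating only that the fact is implied by \cite[Problem~38.12]{aliburpro}, so your argument is in effect a self-contained reconstruction of what that citation is invoked for. The structure is sound: reduction to $\mu\in\positive{\boundedregmeasts}$ via the Jordan decomposition (whose parts are indeed regular, being positive finite measures dominated by regular finite ones, consistently with the paper's use of $\positive{\boundedregmeasts}$ in Theorem~\ref{thm:overview_theorem}); the base case $\psi=\charfunc_A$; linearity to cover $\simplets$; and a completeness argument in the total variation norm to pass to all of $\boundedts$. You also correctly isolate the only genuinely non-formal point: inner regularity of $\mset\mapsto\mu(\mset\cap A)$ on an open set $V$ requires inner regularity of $\mu$ on the Borel set $V\cap A$, which need not be open, and this is exactly what part (2) of Theorem~\ref{thm:overview_theorem} supplies. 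One small simplification: in the closedness step you do not need the exact identity $\norm{\mu^{\psi_n-\psi_m}}=\intts\abs{\psi_n-\psi_m}\,d\mu$ for the total variation norm; the one-sided estimate $\norm{\mu^{\phi}}\leq\norm{\phi}\,\mu(\ts)$, immediate from the definition of total variation, already gives the Cauchy property.
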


The following consequence of Lemma~\ref{lem:again_regular_measure} will be used in the proof of Theorem~\ref{thm:generating_measure_KB}.

\begin{lemma}\label{lem:then_for_bounded_borel}
Let $\ts$ be a locally compact Hausdorff space with Borel $\sigma$-algebra $\sigmats$, and let $\psi\in\boundedts$. Suppose $\mu,\nu\in \boundedregmeasts$ are such that
\begin{equation}\label{eq:equality_of_measures}
\intts \phi\psi\,d\mu=\int\phi\, d\nu
\end{equation}
for all $\phi\in\contots$. Then \eqref{eq:equality_of_measures} holds for all $\phi\in\boundedts$.
\end{lemma}

\begin{proof}
If we let $\mu^\psi(\mset)=\int_\mset \psi\,d\mu$ ($\mset\in\sigmats$), then \eqref{eq:equality_of_measures} implies that $\intts\phi\,d\mu^\psi=\intts\phi\psi\,d\mu=\intts\phi\,d\nu$ for all $\phi\in\contots$. Since Lemma~\ref{lem:again_regular_measure} asserts that $\mu^\psi$ is again a regular finite signed Borel measure, the uniqueness statement in the Riesz representation theorem shows that $\mu^\psi=\nu$. But then $\intts\phi\,d\mu^{\psi}=\intts \phi\,d\nu$ for all $\phi\in\boundedts$, i.e.\ \eqref{eq:equality_of_measures} holds for all $\phi\in\boundedts$.
\end{proof}

\subsection{Consequence of a monotone class theorem}\label{subsec:monotone_class_theorem}

In the proof of Theorem~\ref{thm:generating_measure_KB} we shall need the following special case of a monotone class theorem from measure theory \cite[Theorem~3.14]{wil}.

\begin{theorem}\label{thm:monotone_class_theorem}
        Let $\ts$ be a locally compact Hausdorff space with Borel $\sigma$-algebra $\sigmats$. Suppose $L$ is a vector space of bounded functions on $\ts$ such that:
        \begin{enumerate}
                \item $\charfunc_V\in L$ for all open subsets $V$ of $\ts$;
                \item If $\phi_n$ is a sequence of functions in $L$, and if $\phi$ is a bounded function on $\ts$ such that $0\leq \phi_n(\tselt)\uparrow \phi(\tselt)$ for all $\tselt\in\ts$, then $\phi\in L$.
        \end{enumerate}
        Then $\boundedts\subset L$.
\end{theorem}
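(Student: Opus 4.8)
The plan is to pass from the functional statement to a statement about sets, to settle that with Dynkin's $\pi$-$\lambda$ theorem, and then to bootstrap back up to arbitrary bounded Borel functions using the vector-space structure of $L$ together with hypothesis (2).

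Accordingly, I would first consider the collection
\[
\mathcal{D}=\leb \mset\subset\ts : \charfunc_\mset\in L\rib
\]
and check that it is a Dynkin system containing every open set. That it contains the open sets is precisely hypothesis (1); in particular, since $\ts$ is itself open, $\onefunctionts=\charfunc_\ts\in L$, so $\ts\in\mathcal{D}$. Closure under proper differences is immediate from $L$ being a vector space: if $\mset_1\subset\mset_2$ both lie in $\mathcal{D}$, then $\charfunc_{\mset_2\setminus\mset_1}=\charfunc_{\mset_2}-\charfunc_{\mset_1}\in L$. Closure under increasing countable unions is exactly hypothesis (2): if $\mset_n\uparrow\mset$, then $0\leq\charfunc_{\mset_n}(\tselt)\uparrow\charfunc_\mset(\tselt)$ for every $\tselt\in\ts$, whence $\charfunc_\mset\in L$ and $\mset\in\mathcal{D}$.

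The open subsets of $\ts$ form a $\pi$-system that generates $\sigmats$, so Dynkin's $\pi$-$\lambda$ theorem yields $\sigmats\subset\mathcal{D}$; that is, $\charfunc_\mset\in L$ for all $\mset\in\sigmats$. As $L$ is a vector space, it therefore contains every simple function, i.e.\ $\simplets\subset L$. To reach all of $\boundedts$, I would take a nonnegative $\phi\in\boundedts$ and approximate it from below by the standard nonnegative simple functions $\phi_n$ with $0\leq\phi_n(\tselt)\uparrow\phi(\tselt)$ for all $\tselt$; hypothesis (2) then gives $\phi\in L$. A general $\phi\in\boundedts$ is the difference of its (bounded, Borel, nonnegative) positive and negative parts, each in $L$ by the preceding step, so $\phi\in L$ by linearity, establishing $\boundedts\subset L$.

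No step here poses a genuine obstacle; the verifications of the Dynkin axioms and of the simple-function approximation are routine, and the argument is driven mechanically by hypotheses (1) and (2) together with linearity. The only point deserving a moment's attention is the appeal to the $\pi$-$\lambda$ theorem with the correct generating $\pi$-system: one feeds in the open sets (closed under finite intersection, and generating $\sigmats$ by definition) rather than, say, the compact sets. This is precisely the mechanism by which the stated result becomes a special case of the general functional monotone class theorem cited.
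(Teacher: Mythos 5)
Your proof is correct. The paper offers no argument of its own here\textemdash it presents the statement as a special case of the functional monotone class theorem, citing \cite[Theorem~3.14]{wil}\textemdash and your argument (pass to the Dynkin system of sets whose characteristic functions lie in $L$, apply the $\pi$-$\lambda$ theorem to the $\pi$-system of open sets, then climb back up through simple functions and monotone approximation, splitting a general function into positive and negative parts) is precisely the standard proof of that cited result, specialized exactly as the paper intends.
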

\section{Positive spectral measures}\label{sec:positive_spectral_measures}

In this section we introduce the notion of a positive spectral measure on a general $\sigma$-algebra, and establish some basic properties.

\begin{definition}\label{def:positive_spectral_measure}
Let $\set$ be a set, $\sigmaset$ a $\sigma$-algebra of subsets of $\set$, and $\bl$ a Banach lattice. A map $\proj \colon \sigmaset \to\regularbl\subset\boundedbl$ is called a \emph{positive spectral measure on $\sigmaset$} when it has the following properties:
\begin{enumerate}
 \item For each $\mset$ in $\sigmaset$, $\proj(\mset)$ is a positive projection,
 \item $\proj(\emptyset)=0$,
 \item $\proj(\mset_1\cap\mset_2)=\proj(\mset_1)\proj(\mset_2)$ for $\mset_1, \mset_2\in\sigmaset$,
 \item $\proj$ is $\sigma$-additive for the strong operator topology on $\boundedbl$, i.e.\ if $(\mset_n)_{n=1}^{\infty}$ are pairwise disjoint elements of $\sigmaset$, and $\elt\in\bs$, then
 \[
\proj(\cup_{n=1}^{\infty} \mset_n)\elt=\sum_{n=1}^{\infty} \proj(\mset_n) \elt,
 \]
where the series converges in the norm topology.
\end{enumerate}
If $\proj(\set)=\idmap_{\bs}$, then $\proj$ is called \emph{unital}.
\end{definition}

\begin{remark}\label{rem:Pettis_consequence}
It is instrumental for the proof of Theorems~\ref{thm:order_continuity_is_necessary_and_sufficient} and~\ref{thm:generating_measure_KB} that, as an immediate consequence of a theorem of Pettis' \cite[Theorem~IV.10.1] {dunsch1} (see also \cite[Lemma III.2]{ric}), the combination of (1), (2), (3), and (4) is equivalent to the combination of (1), (2), (3), and ($4^\prime$), where
\begin{enumerate}
 \item[($4^\prime$)] $\proj$ is $\sigma$-additive for the weak operator topology on $\boundedbl$, i.e.\ if $(\mset_n)_{n=1}^{\infty}$ are pairwise disjoint elements of $\sigmaset$, and $\elt\in\bs,\dualelt\in\dualbl$, then
 \[
 \pairing{\proj(\cup_{n=1}^{\infty} \mset_n)\elt,\dualelt}=\sum_{n=1}^{\infty} \pairing{\proj(\mset_n)\elt,\dualelt}.
 \]
\end{enumerate}
\end{remark}

\begin{remark}\quad
\begin{enumerate}
\item There are several variants of the definition of spectral measures (or resolutions of the identity) to be found in the literature:
\begin{enumerate}
\item In \cite[Definition~IX.1.1]{con}, \cite[Definition~12.17]{rud} (both in the context of Hilbert spaces), and \cite[Definition~III.4]{ric} (in the context of  Banach spaces) a spectral measure is required to be $\sigma$-additive in the strong operator topology (or, equivalently, in the weak operator topology), as we do. Contrary to our definition, however, it is required to be unital.
\item In the context of Banach spaces, a spectral measure is defined on a Boolean algebra of subsets of a set in \cite[Definition~XV.2.1]{dunsch3}. A spectral measure in that sense on a $\sigma$-algebra is required to be only finitely additive, whereas we require $\sigma$-additivity. Contrary to our definition, it is required to be unital.
\end{enumerate}
\item Using the terminology of \cite[p.~1]{dieuhl}, a positive spectral measure on $\sigmaset$ as in our definition is a map $\proj\colon\sigmaset\to\regularbl$ such that it takes values in the positive projections on $\bl$, $\proj(\mset_1\cap\mset_2)=\proj(\mset_1)\proj(\mset_2)$ ($\mset_1,\,\mset_2\in\sigmaset$), and such that, for all $\elt\in\bl$, the map $\mset\mapsto\proj(\mset)x$ ($\mset\in\sigmaset$) is a countably additive vector measure on $\sigmaset$ with values in $\bl$.
\end{enumerate}
\end{remark}

It follows easily from Definition~\ref{def:positive_spectral_measure} that a positive spectral measure is finitely additive, and the following lemma on monotonicity and uniform boundedness is then clear.

\begin{lemma}\label{lem:measure_is_bounded}
Let $\proj \colon \sigmaset \to\regularbl$ be a positive spectral measure. Then $\proj$ is monotone, i.e.\ if $\mset_1,\mset_2\in\sigmaset$ and $\mset_1\subset\mset_2$, then $0\leq \proj(\mset_1)\leq\proj(\mset_2)$. Consequently, $\norm{\proj(\mset_1)}\leq \norm{\proj(\mset_2)}$ for such $\mset_1,\mset_2$, and in particular $\norm{\proj(\mset)}\leq\norm{\maxprojset}$ for all $\mset\in\sigmaset$.
\end{lemma}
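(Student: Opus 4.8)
The plan is to reduce everything to finite additivity together with the monotonicity of the operator norm on $\positive{\regularbl}$ that was recorded in Subsection~\ref{subsec:basics}. The statement is essentially immediate once these two ingredients are in place, so the proof is short; the only points requiring a word of care are flagged at the end.

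First I would establish monotonicity. Given $\mset_1,\mset_2\in\sigmaset$ with $\mset_1\subset\mset_2$, I would write $\mset_2$ as the disjoint union $\mset_2=\mset_1\cup(\mset_2\setminus\mset_1)$ and invoke finite additivity (which, as noted immediately before the statement, follows easily from Definition~\ref{def:positive_spectral_measure}) to obtain $\proj(\mset_2)=\proj(\mset_1)+\proj(\mset_2\setminus\mset_1)$. Since every value of $\proj$ is by definition a \emph{positive} projection, both summands lie in $\positive{\regularbl}$; in particular $\proj(\mset_1)\geq 0$, and the difference $\proj(\mset_2)-\proj(\mset_1)=\proj(\mset_2\setminus\mset_1)\geq 0$ yields $\proj(\mset_1)\leq\proj(\mset_2)$. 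Thus $0\leq\proj(\mset_1)\leq\proj(\mset_2)$, which is the asserted monotonicity.

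For the norm statements I would simply quote the fact, already recalled from \cite[Proposition~1.3.5]{mey}, that the operator norm $\normfunction$ is monotone on $\positive{\regularbl}$. Applied to $0\leq\proj(\mset_1)\leq\proj(\mset_2)$ this gives $\norm{\proj(\mset_1)}\leq\norm{\proj(\mset_2)}$. Specialising to $\mset_2=\set$, so that $\mset\subset\set$ for every $\mset\in\sigmaset$, then produces $\norm{\proj(\mset)}\leq\norm{\maxprojset}$, as required.

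There is no genuine obstacle here. The two care-points are that finite additivity is indeed at our disposal---it is precisely the ``easy consequence'' of Definition~\ref{def:positive_spectral_measure} alluded to just before the lemma---and that the monotonicity we invoke has been established for the \emph{operator} norm $\normfunction$ rather than for the regular norm; the cited Meyer--Nieberg result is stated for $\normfunction$, so the conclusion comes out for $\normfunction$ exactly as the lemma demands.
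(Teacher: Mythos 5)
Your proposal is correct and follows exactly the route the paper intends: the paper gives no explicit proof, stating only that finite additivity follows easily from Definition~\ref{def:positive_spectral_measure} and that the lemma ``is then clear,'' and the implicit argument is precisely yours\textemdash decompose $\mset_2=\mset_1\cup(\mset_2\setminus\mset_1)$, use positivity of the values of $\proj$, and then apply the monotonicity of the operator norm on $\positive{\regularbl}$ recalled in Subsection~\ref{subsec:basics}. Your closing care-point about the operator norm versus the regular norm is also well taken and consistent with the paper's setup.
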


\begin{remark}\quad
\begin{enumerate}
\item It is a consequence of the uniform boundedness principle that a ($\sigma$-additive) spectral measure taking values in the bounded projections on a Banach space is always uniformly bounded (cf.\ \cite[Lemma~III.3]{ric}; the proof also applies if the spectral measure is not unital). The point in Lemma~\ref{lem:measure_is_bounded} is, therefore, the explicit and sharp uniform upper bound $\norm{\maxprojset}$.
\item
If $\proj$ is unital, then $\proj(\sigmaset)$ is a $\sigma$-complete Boolean algebra of bounded projections on the Banach space $\bl$. In that case, \cite[Lemma~XVII.3.3]{dunsch3} also implies that  $\proj(\sigmaset)$ is uniformly bounded.
\end{enumerate}
\end{remark}

If $\proj \colon \sigmaset \to\regularbl$ is a positive spectral measure, and $\condbothsp$, we define $\mustandardpair \colon \sigmaset\to\R$ by $\mustandardpair(\mset)=\pairing{\proj(\mset)\elt,\dualelt}$ ($\condbothsp$); our notation $\mustandardpair$ follows Schaefer, cf.\ \cite[proof of Proposition~V.3.2]{schbook}. It is clear that $\mustandardpair$ is a finite signed measure, and that $\mustandardpair$ is positive when $\condbothpossp$. By a standard argument \cite[p.~97]{dunsch1}, we have $\norm{\mustandardpair}\leq 2\sup_{\mset\in\sigmaset}\abs{\mustandardpair(\mset)}$, so that $\norm{\mustandardpair}\leq 2\norm{\maxprojset}\norm{\elt}\norm{\dualelt}$ ($\condboth$). The factor 2 can be removed here, as is stated in the following still more precise result.

\begin{lemma}\label{lem:total_variation_estimate}
Let $\proj \colon \sigmaset \to\regularbl$ be a positive spectral measure, and $\condbothsp$. Then $\norm{\mustandardpair}\leq \pairing{\maxprojset\abs{\elt},\abs{\dualelt}}$. Equality holds if $\elt\in\blpos\cup\negative{\bl}$ and $\dualelt\in\dualblpos\cup\negative{\dualbl}$.

For all $\condbothsp$, $\norm{\mustandardpair}\leq \norm{\maxprojset}\norm{\elt}\norm{\dualelt}$.
\end{lemma}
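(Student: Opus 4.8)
The plan is to compute the total variation of $\mustandardpair$ through its description as a supremum over finite measurable partitions of $\set$, and to replace the crude estimate $\norm{\mustandardpair}\leq 2\sup_{\mset}\abs{\mustandardpair(\mset)}$ quoted before the lemma by a sharper one that exploits both the positivity of the values of $\proj$ and its finite additivity. Recall that $\norm{\mustandardpair}=\sup\sum_{i=1}^{n}\abs{\mustandardpair(\mset_i)}$, where the supremum runs over all partitions of $\set$ into finitely many pairwise disjoint $\mset_1,\dots,\mset_n\in\sigmaset$. For a fixed such partition I would bound each summand by passing to absolute values in the lattice: using the standard inequality $\abs{\pairing{y,\dualelt}}\leq\pairing{\abs{y},\abs{\dualelt}}$ (valid for every $y\in\bl$), followed by $\abs{\proj(\mset_i)\elt}\leq\proj(\mset_i)\abs{\elt}$ (valid since $\proj(\mset_i)$ is positive) together with the monotonicity of the pairing against the positive functional $\abs{\dualelt}$, one finds
\[
\abs{\mustandardpair(\mset_i)}=\abs{\pairing{\proj(\mset_i)\elt,\dualelt}}\leq\pairing{\proj(\mset_i)\abs{\elt},\abs{\dualelt}}.
\]

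Summing over $i$ and invoking finite additivity of $\proj$---so that $\sum_{i=1}^{n}\proj(\mset_i)=\proj(\cup_i\mset_i)=\maxprojset$ for a partition of $\set$---the right-hand sides telescope to $\pairing{\maxprojset\abs{\elt},\abs{\dualelt}}$, a quantity independent of the partition. Taking the supremum over all partitions then yields the first inequality $\norm{\mustandardpair}\leq\pairing{\maxprojset\abs{\elt},\abs{\dualelt}}$. For the equality assertion I would note that when $\elt\in\blpos\cup\negative{\bl}$ and $\dualelt\in\dualblpos\cup\negative{\dualbl}$, positivity of each $\proj(\mset)$ forces $\mustandardpair$ to be of constant sign (the sign being that of the product of the signs of $\elt$ and $\dualelt$). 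Hence $\norm{\mustandardpair}=\abs{\mustandardpair(\set)}=\abs{\pairing{\maxprojset\elt,\dualelt}}$, and since $\elt$ and $\dualelt$ have definite sign this absolute value equals $\pairing{\maxprojset\abs{\elt},\abs{\dualelt}}$, so the bound from the first part is attained. Finally, the coarse estimate is immediate from the first inequality and duality: $\pairing{\maxprojset\abs{\elt},\abs{\dualelt}}\leq\norm{\maxprojset\abs{\elt}}\,\norm{\abs{\dualelt}}\leq\norm{\maxprojset}\,\norm{\elt}\,\norm{\dualelt}$, using $\norm{\abs{\elt}}=\norm{\elt}$ and $\norm{\abs{\dualelt}}=\norm{\dualelt}$ in $\bl$ and in its dual Banach lattice.

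The argument is essentially routine; the one point that deserves care---and where the improvement over the factor-$2$ bound really comes from---is that I must telescope the partition sum through finite additivity of $\proj$ to reach \emph{exactly} $\maxprojset$, rather than resorting to the term-by-term supremum bound that produces the superfluous factor $2$. The only other place requiring attention is chaining the two lattice inequalities in the correct order (first pass to $\abs{\,.\,}$, then use positivity of $\proj(\mset_i)$ and monotonicity against $\abs{\dualelt}$); everything else is bookkeeping.
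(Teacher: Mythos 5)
Your proof is correct, and it reaches the conclusion by a genuinely different route from the paper's in the key step. The paper first establishes the exact identity $\norm{\mustandardpair}=\pairing{\maxprojset\elt,\dualelt}$ for $\elt\in\blpos$ and $\dualelt\in\dualblpos$ (where the partition sum telescopes because every term is nonnegative), and then deduces the general inequality by bilinearity of $(\elt,\dualelt)\mapsto\mustandardpair$: one splits $\elt=\positive{\elt}-\negative{\elt}$ and $\dualelt=\positive{\dualelt}-\negative{\dualelt}$, applies the triangle inequality for the total variation norm to the resulting four positive measures, and resums their norms to $\pairing{\maxprojset\abs{\elt},\abs{\dualelt}}$. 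You instead treat arbitrary $\elt,\dualelt$ in one pass, dominating each partition term through the two lattice inequalities $\abs{\pairing{y,\dualelt}}\leq\pairing{\abs{y},\abs{\dualelt}}$ and $\abs{\proj(\mset_i)\elt}\leq\proj(\mset_i)\abs{\elt}$ (the latter valid by positivity of $\proj(\mset_i)$, the former by the lattice structure of $\dualbl$), and then telescoping via finite additivity; your equality assertion comes from the constant-sign observation for $\mustandardpair$ rather than from the positive-case computation. Both arguments share the same skeleton (partition sum, positivity, finite additivity of $\proj$), but yours trades the paper's four-fold splitting for the two standard modulus inequalities: it needs slightly more lattice machinery per term, while the paper's version uses only positivity applied to positive vectors and leaves the (routine, but not entirely free) bookkeeping of the decomposition implicit in the phrase ``by splitting into positive and negative parts.'' The closing estimate $\pairing{\maxprojset\abs{\elt},\abs{\dualelt}}\leq\norm{\maxprojset}\norm{\elt}\norm{\dualelt}$ is identical in both, resting on $\norm{\abs{\elt}}=\norm{\elt}$ and $\norm{\abs{\dualelt}}=\norm{\dualelt}$.
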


\begin{proof}
Let $\condbothpossp$. If $\set=\bigcup_{i=1}^n\mset_i$ is a measurable disjoint partition of $\set$, then, using the fact that the $\proj(\mset_i)$ are positive, we see that
\begin{align*}
\sum_{i=1}^n\abs{\mustandardpair(\mset_i)}&=\sum_{i=1}^n\abs{\pairing{\proj(\mset_i)\elt,\dualelt}}\\
&=\sum_{i=1}^n \pairing{\proj(\mset_i)\elt,\dualelt}\\
&=\pairing{\sum_{i=1}^n\proj(\mset_i)\elt,\dualelt}\\
&=\pairing{\maxprojset \elt,\dualelt}.
\end{align*}
Hence $\norm{\mustandardpair}=\pairing{\maxprojset\elt,\dualelt}$ for $\elt\in\blpos$ and $\dualelt\in\dualblpos$.  This implies that $\norm{\mustandardpair}= \pairing{\maxprojset\abs{\elt},\abs{\dualelt}}$ if $\elt\in\blpos\cup\negative{\bl}$ and $\dualelt\in\dualblpos\cup\negative{\dualbl}$. The rest of the lemma follows by splitting arbitrary $\elt\in\bl$ and $\dualelt\in\dualbl$ into their positive and negative parts.
\end{proof}

The following fact will be needed in the proof of Proposition~\ref{prop:regularity_of_the_spectral_measure}, where it is shown that regular positive spectral measures on the Borel $\sigma$-algebra of a locally compact Hausdorff space are also regular in a natural sense that is specific for the ordered context.

\begin{lemma}\label{lem:inf_and_sup_msets}
Let $\sigmaset$ be a $\sigma$-algebra of subsets of a set $\set$, $\bl$ a Banach lattice, and $\proj \colon \sigmaset\to\regularbl$ a positive spectral measure with associated finite signed measures $\mustandardpair$ \ulp$\condbothsp$\urps on $\sigmaset$.
\begin{enumerate}
\item Suppose $\mset,\mset_i\in I$ \ulp $i\in I$\urps are elements of $\sigmaset$ such that:
\begin{enumerate}
\item $\mset\subset\mset_i$ \ulp $i\in I$\urp;
\item $\mustandardpair(\mset)=\inf_{i\in I}\mustandardpair(\mset_i)$ for all $\condbothpossp$.
\end{enumerate}
Then $\proj(\mset)=\inf_{i\in I}\proj(\mset_i)$ in $\regularbl$.
\item Suppose $\mset,\mset_i\in I$ \ulp $i\in I$\urps are elements of $\sigmaset$ such that:
\begin{enumerate}
\item $\mset\supset\mset_i$ \ulp $i\in I$\urp;
\item $\mustandardpair(\mset)=\sup_{i\in I}\mustandardpair(\mset_i)$ for all $\condbothpossp$.
\end{enumerate}
Then $\proj(\mset)=\sup_{i\in I}\proj(\mset_i)$ in $\regularbl$.
\end{enumerate}
\end{lemma}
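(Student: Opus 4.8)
The plan is to prove each statement by verifying directly that $\proj(\mset)$ is the greatest lower bound (respectively least upper bound) of the family $\{\,\proj(\mset_i) : i\in I\,\}$ in $\regularbl$; in particular this simultaneously establishes that the infimum (respectively supremum) exists and equals $\proj(\mset)$. The engine of the argument is the order-theoretic characterization of positivity by duality: an operator $S\in\regularbl$ satisfies $S\geq 0$ if and only if $\pairing{S\elt,\dualelt}\geq 0$ for all $\elt\in\blpos$ and $\dualelt\in\dualblpos$. This is a standard consequence of the fact that, in a Banach lattice, $\elt\geq 0$ precisely when $\pairing{\elt,\dualelt}\geq 0$ for all $\dualelt\in\dualblpos$, which in turn follows from the Hahn--Banach separation theorem together with the closedness of $\blpos$.

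For part (1), I would first note that $\proj(\mset)$ is a lower bound. Indeed, by (a) we have $\mset\subset\mset_i$ for every $i$, so the monotonicity of $\proj$ established in Lemma~\ref{lem:measure_is_bounded} gives $\proj(\mset)\leq\proj(\mset_i)$ in $\regularbl$. To see that $\proj(\mset)$ is the greatest lower bound, let $T\in\regularbl$ be any lower bound, so that $\proj(\mset_i)-T\geq 0$ for every $i$. Applying the duality characterization to each positive operator $\proj(\mset_i)-T$ yields $\pairing{T\elt,\dualelt}\leq\pairing{\proj(\mset_i)\elt,\dualelt}=\mustandardpair(\mset_i)$ for all $\elt\in\blpos$, $\dualelt\in\dualblpos$, and all $i$. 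Taking the infimum over $i$ and invoking hypothesis (b), I obtain $\pairing{T\elt,\dualelt}\leq\inf_{i\in I}\mustandardpair(\mset_i)=\mustandardpair(\mset)=\pairing{\proj(\mset)\elt,\dualelt}$ for all $\elt\in\blpos$ and $\dualelt\in\dualblpos$. Hence $\pairing{(\proj(\mset)-T)\elt,\dualelt}\geq 0$ throughout, and the duality characterization applied once more gives $\proj(\mset)-T\geq 0$, i.e.\ $T\leq\proj(\mset)$. Thus $\proj(\mset)=\inf_{i\in I}\proj(\mset_i)$.

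Part (2) follows by the symmetric argument: monotonicity together with (a) shows that $\proj(\mset)$ is an upper bound, and for any upper bound $T\in\regularbl$ the same duality manipulation, now with a supremum in place of the infimum and using hypothesis (b) of part (2), yields $\pairing{T\elt,\dualelt}\geq\sup_{i\in I}\mustandardpair(\mset_i)=\mustandardpair(\mset)=\pairing{\proj(\mset)\elt,\dualelt}$ for all $\elt\in\blpos$, $\dualelt\in\dualblpos$, whence $T\geq\proj(\mset)$. Alternatively, one could deduce (2) from (1) by passing to complements, using the finite additivity of $\proj$ to write $\proj(\set\setminus\mset)=\maxprojset-\proj(\mset)$ and noting that $T\mapsto\maxprojset-T$ is an order-reversing bijection of $\regularbl$ that interchanges suprema and infima.

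I do not anticipate a serious obstacle here; the only point requiring genuine care is the duality characterization of positivity, which is precisely what converts the scalar hypotheses (b)---stated only for positive $\elt$ and $\dualelt$---into an operator inequality in $\regularbl$. Everything else is bookkeeping with the monotonicity from Lemma~\ref{lem:measure_is_bounded} and the definition of $\mustandardpair$.
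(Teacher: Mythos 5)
Your proof is correct and follows essentially the same route as the paper's: monotonicity of $\proj$ (Lemma~\ref{lem:measure_is_bounded}) makes $\proj(\mset)$ a lower bound, and any other lower bound $T$ is compared with $\proj(\mset)$ by pairing against positive $\elt,\dualelt$, taking the infimum over $i$, and invoking hypothesis (b). The only differences are cosmetic: you spell out the duality characterization of operator positivity that the paper uses tacitly in the step from $\pairing{T\elt,\dualelt}\leq\pairing{\proj(\mset)\elt,\dualelt}$ to $T\leq\proj(\mset)$, and you offer a valid alternative derivation of part (2) from part (1) via complements, where the paper simply notes the symmetric argument.
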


Note that we not assume that $\bl$ is Dedekind complete, hence the existence of the infimum and supremum is not automatic.

\begin{proof}
We prove part (1); the proof of part (2) is similar. Since $\proj$ is monotone, $\proj(\mset)\leq\proj(\mset_i)$ ($i\in I$). Suppose $T\in\regularbl$ and $T\leq\proj(\mset_i)$ ($i\in I$). Let $\condbothpossp$. Then $\pairing{T\elt,\dualelt}\leq\pairing{\proj(\mset_i)\elt,\dualelt}=\mustandardpair(\mset_i)$ ($i\in I$), so that $\pairing{T\elt,\dualelt}\leq\inf_{i\in I} \mustandardpair(\mset_i)=\mustandardpair(\mset)=\pairing{\proj(\mset)\elt,\dualelt}$. Hence $T\leq\proj(\mset)$.
\end{proof}

\section{Positive $\boundedset$-representations generated by positive spectral measures}\label{sec:from_positive_spectral_measure_to_positive_representation}

 Since we know from Lemma~\ref{lem:measure_is_bounded} that a positive spectral measure is uniformly bounded, one can employ a standard method \cite[p.~891-892]{dunsch2}\,\cite[p.~13-14]{ric} to construct a representation of the bounded measurable functions $\boundedset$ on $\set$ from this measure. In the first part of this section, we study the basic properties of the representations thus obtained. In the second part, we turn the tables and ask ourselves which positive representations of $\boundedset$ arise in this fashion.

 Starting with a positive spectral measure $\proj \colon \sigmaset\to\regularbl$ on a $\sigma$-algebra $\sigmaset$ of subsets of a set $\set$, the associated representation $\rbs_\proj \colon \boundedset\to\regularbl$ is constructed as follows. If $\phi=\sum_{i=1}^n \alpha_i\charfunc_{\mset_i}\in\boundedset$ is a simple function, with the $\mset_i$ not necessarily disjoint, then let $\rbs_\proj(\phi)=\sum_{i=1}^n\alpha_i\proj(\mset_i)\in\regularbl$. This is well defined, and one thus obtains a representation $\rbs_\proj$ of the simple functions $\simpleset$ on $\bl$ that is clearly a positive representation of the ordered algebra $\simpleset$. Taking the $\mset_i$ in $\phi=\sum_{i=1}^n \alpha_i\charfunc_{\mset_i}$ to be a measurable disjoint partition of $\set$, and invoking Lemma~\ref{lem:total_variation_estimate} in the penultimate step, we see that
\begin{align*}
\norm{\rbs_\proj(\phi)}&=\sup\leb \abs{\pairing{\sum_{i=1}^n\alpha_i\proj(\mset_i)\elt,\dualelt}}:\elt\in\bl,\,\norm{\elt}\leq 1,\,\dualelt\in\dualbl,\, \norm{\dualelt}\leq 1\rib \\
&\leq\sup\leb\sum_{i=1}^n \abs{\alpha_i}\abs{\pairing{\proj(\mset_i)\elt,\dualelt}}:\elt\in\bl,\,\norm{\elt}\leq 1,\,\dualelt\in\dualbl,\,\norm{\dualelt}\leq 1\rib \\
&\leq\norm{\phi}\sup\leb\sum_{i=1}^n \abs{\pairing{\proj(\mset_i)\elt,\dualelt}}:\elt\in\bl,\,\norm{\elt}\leq 1,\,\dualelt\in\dualbl,\,\norm{\dualelt}\leq 1\rib \\
&\leq\norm{\phi}\sup\leb\norm{\mustandardpair}: \elt\in\bl,\,\norm{\elt}\leq 1,\,\dualelt\in\dualbl,\,\norm{\dualelt}\leq 1\rib\\
&\leq\norm{\phi}\norm{\maxprojset}\\
&=\norm{\phi} \norm{\rbs(\onefunctionset)}.
\end{align*}

We conclude that $\rbs_\proj \colon \simpleset \to\regularblwithnorm\subset\boundedblwithnorm$ is bounded with norm $\norm{\maxprojset}$. Since $\simpleset$ is dense in $\boundedset$, $\rbs_\proj$ extends uniquely to a bounded representation $\rbs_\proj \colon \boundedset \to\boundedblwithnorm$. Furthermore, since $\positive{\simpleset}$ is dense in $\positive{\boundedset}$, and $\positive{\bl}$ is closed in $\bl$, we see that actually $\rbs_\proj(\positive{\boundedset})\subset\positive{\regularbl}$. Hence  $\rbs_\proj \colon \boundedset \to\regularblwithnorm$ is a positive representation with norm $\norm{\maxprojset}$.

If $\bl$ is Dedekind complete, then the regular norm is defined on $\regularbl$. Since $\rbs_\proj$ is positive, $\normfunction$ is monotone on $\positive{\regularbl}$, and $\abs{\phi}\leq\norm{\phi}\onefunctionset$ for $\phi\in\boundedset$, we have $\regnorm{\rbs_\proj(\phi)}=\norm{\abs{\rbs_\proj(\phi)}}\leq
\norm{\rbs_\proj(\abs{\phi})}\leq\norm{\rbs_\proj(\norm{\phi}\onefunctionset)}
=\norm{\maxprojset}\norm{\phi}$. We see that $\rbs_{\proj} \colon \boundedset \to\regularblwithregnorm$ is also bounded, and that $\regnorm{\rbs_\proj}=\norm{\maxprojset}$.

We collect our first results and a few more or less standard properties of $\rbs_\proj$ in the following theorem. At the moment of writing, it is an open question whether the common subalgebra in part (7)(b) can be a proper subalgebra of the common subalgebra in part (5).

\begin{theorem}\label{thm:generated_positive_representation}
Let $\sigmaset$ be a $\sigma$-algebra of subsets of a set $\set$, $\bl$ a Banach lattice, and $\proj \colon \sigmaset\to\regularbl$ a positive spectral measure.
\begin{enumerate}
 \item The map $\rbs_\proj \colon \simpleset \to\regularblwithnorm$, defined on the simple functions by $\rbs_\proj(\sum_{i=1}^n\alpha_j\charfunc_{\mset_i})=\sum_{i=1}^n\alpha_j\proj(\mset_i)$, extends uniquely to a bounded linear map $\rbs_\proj \colon \boundedset \to\boundedblwithnorm$. This extension is a positive representation $\rbs_\proj \colon \boundedset \to\regularblwithnorm$ with norm $\norm{\rbs_\proj}=\norm{\maxprojset}=\norm{\rbs_\proj(\onefunctionset)}$.
 \item $\rbs_\proj$ is unital if and only if $\proj$ is unital.
 \item For $\phi\in\boundedset$, $\rbs_\proj(\phi)\in\regularbl$ is the unique element of $\boundedbl$ such that, for all $\elt\in\bl,\dualelt\in\dualbl$,
\begin{equation}\label{eq:weak_def}
\pairing{\rbs_\proj(\phi)\elt,\dualelt}=\intts \phi\,d\mustandardpair.
\end{equation}
\item If $\phi\in\boundedset$ and $\lep\phi\rip_{n=1}^\infty\subset\boundedset$ is a bounded sequence in $\boundedset$, such that $\lim_{n\to\infty}\phi_n(\setelt)=\phi(\setelt)$ for all $\setelt\in\set$, then $\rbs_\proj(\phi)=\textup{WOT-}\lim_n\rbs_\proj(\phi_n)$.
 \item The closed subalgebras of $\boundedblwithnorm$ generated by $\proj(\sigmaset)$, $\rbs_\proj(\simpleset)$, and $\rbs_\proj(\boundedset)$ are equal.
 \item The commutants $\commutant{\proj(\sigmaset)}$, $\commutant{\rbs_\proj(\simpleset)}$, and $\commutant{\rbs_\proj(\boundedset)}$ in $\boundedbl$ are equal.
 \item If $\bl$ is Dedekind complete, then:
 \begin{enumerate}
 \item $\rbs_\proj \colon \boundedset \to\regularblwithregnorm$ is bounded, with norm $\regnorm{\rbs_\proj}=\norm{\maxprojset}=\norm{\rbs_\proj(\onefunctionset)}$.
 \item The closed subalgebras of $\regularblwithregnorm$ generated by $\proj(\sigmaset)$, $\rbs_\proj(\simpleset)$, and $\rbs_\proj(\boundedset)$ are equal, and this common subalgebra is contained in the common subalgebra in part \ulp 5\urp.
 \end{enumerate}
\end{enumerate}
\end{theorem}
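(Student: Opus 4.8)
The plan is to build on the norm estimates already obtained in the text preceding the theorem, so that the remaining work is to establish the algebraic identities on the simple functions $\simpleset$ and then propagate everything to $\boundedset$ by density and continuity. Well-definedness, boundedness with norm $\norm{\maxprojset}$, and positivity of $\rbs_\proj$ are in hand; for part (1) it therefore remains to verify multiplicativity. On simple functions this is forced by property (3) of a positive spectral measure: for $\phi=\sum_i\alpha_i\charfunc_{\mset_i}$ and $\psi=\sum_j\beta_j\charfunc_{\mset_j'}$ one has $\phi\psi=\sum_{i,j}\alpha_i\beta_j\charfunc_{\mset_i\cap\mset_j'}$, and $\proj(\mset_i\cap\mset_j')=\proj(\mset_i)\proj(\mset_j')$ yields $\rbs_\proj(\phi\psi)=\rbs_\proj(\phi)\rbs_\proj(\psi)$. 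Multiplicativity passes to all of $\boundedset$ since $\simpleset$ is norm-dense and operator multiplication is continuous. Part (2) is immediate from $\rbs_\proj(\onefunctionset)=\proj(\set)$.

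For part (3) I would first check the integral formula on simple functions, where $\pairing{\rbs_\proj(\phi)\elt,\dualelt}=\sum_i\alpha_i\mustandardpair(\mset_i)=\intts\phi\,d\mustandardpair$ holds by the definition of $\mustandardpair$. Both sides are norm-continuous in $\phi$ — the left by boundedness of $\rbs_\proj$, the right because $\lev\intts\phi\,d\mustandardpair\riv\leq\norm{\phi}\norm{\mustandardpair}$ with $\norm{\mustandardpair}$ controlled by Lemma~\ref{lem:total_variation_estimate} — so the formula extends to all $\phi\in\boundedset$ by density; uniqueness follows since $\dualbl$ separates the points of $\bl$. Part (4) is then a direct consequence of part (3) together with the dominated convergence theorem applied to the finite signed measures $\mustandardpair$, the constant $\sup_n\norm{\phi_n}$ serving as an integrable dominating function.

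Parts (5) and (6) are formal consequences of the identity $\proj(\mset)=\rbs_\proj(\charfunc_\mset)$ and the norm-density of $\rbs_\proj(\simpleset)$ in $\rbs_\proj(\boundedset)$. For (5), the (not yet closed) algebra generated by $\proj(\sigmaset)$ coincides with the one generated by $\rbs_\proj(\simpleset)$, because each $\rbs_\proj(\phi)$ with $\phi$ simple is a linear combination of the $\proj(\mset_i)$; continuity of $\rbs_\proj$ then places $\rbs_\proj(\boundedset)$ inside the norm-closure of $\rbs_\proj(\simpleset)$, so all three generate the same closed subalgebra. For (6) I would use that forming commutants reverses inclusions and that an operator commuting with every $\rbs_\proj(\phi_n)$ commutes with their norm-limit; combined with $\proj(\sigmaset)\subset\rbs_\proj(\simpleset)\subset\rbs_\proj(\boundedset)$ this squeezes the three commutants $\commutant{\proj(\sigmaset)}$, $\commutant{\rbs_\proj(\simpleset)}$, $\commutant{\rbs_\proj(\boundedset)}$ together.

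Finally, for part (7), the boundedness with respect to $\regnormfunction$ and the value $\regnorm{\rbs_\proj}=\norm{\maxprojset}$ are already derived before the theorem. The equality of the three closed subalgebras in (7)(b) repeats the argument of (5) verbatim, now with the regular norm, the required $\regnormfunction$-continuity of $\rbs_\proj$ being precisely (7)(a). The containment of the subalgebra of (7)(b) in that of (5) follows because the inclusion $\regularblwithregnorm\hookrightarrow\boundedblwithnorm$ has norm $1$, so $\regnormfunction$-convergence implies $\normfunction$-convergence and every $\regnormfunction$-limit of elements of the generated algebra is the $\normfunction$-limit of the same operators. I do not expect a genuine obstacle; the only points demanding a little care are the passages of the multiplicative relation and of the commutation relation through norm-limits in parts (1), (5), and (6), where one must invoke continuity of operator multiplication rather than argue factorwise.
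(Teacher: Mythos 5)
Your proposal is correct and takes essentially the same route as the paper: build on the construction and norm/positivity estimates for simple functions from the discussion preceding the theorem, extend by density and continuity of operator multiplication, verify \eqref{eq:weak_def} on $\simpleset$ and extend by continuity of both sides, apply dominated convergence for part (4), and use formal density and commutant arguments for parts (5)--(7). The paper's own proof is just a terser version of exactly these steps, deferring the same items to the preceding discussion that you do.
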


\begin{proof}
For part (3), we note that \eqref{eq:weak_def} holds by construction if $\phi\in\simpleset$. Since, for fixed $\condbothsp$, both sides in \eqref{eq:weak_def} are bounded linear functionals on $\boundedset$, the general case follows by continuity. The uniqueness statement in part (3) is clear, and part (4) is immediate from an application of the dominated convergence theorem. The remaining statements follow easily from the discussion preceding the theorem.
\end{proof}

\begin{remark}\quad
\begin{enumerate}
\item The standard estimate (for the real case) \cite[p.~892]{dunsch2} yields that $\norm{\rbs_\proj}\leq 2\sup_{\mset\in\sigmaset}\norm{\proj(\mset)}$, and combination with Lemma~\ref{lem:measure_is_bounded} then implies that we know a priori that $\norm{\rbs_\proj}\leq 2\norm{\maxprojset}$. Part (1) therefore shows that in the ordered context  one can remove the factor 2 and obtain equality. 
\item If $\bl$ is Dedekind complete, the positive map $\rbs \colon \boundedset\to\regularblwithregnorm$ is automatically bounded (and then so is $\rbs \colon \boundedset\to\regularblwithnorm$. The point in part (7)(a) is the value of $\regnorm{\rbs_\proj}$. 
\item If $\bl$ is Dedekind complete, then there is a seemingly alternative way of obtaining a positive representation of $\boundedset$ on $\bl$. Indeed, one can also view $\rbs_\proj$ on $\simpleset$ as a bounded map $\rbs_\proj \colon \simpleset\to\regularblwithregnorm$, where the codomain is now likewise a Banach space. Extending by continuity, we obtain a bounded positive representation $\rbs_\proj^\textup{r} \colon  \simpleset \to\regularblwithregnorm$. Since the inclusion map $\regularblwithregnorm\hookrightarrow\boundedblwithnorm$ is bounded, a moment's thought shows that actually $\rbs_\proj=\rbs_\proj^{\textup r}$. Hence there is no ambiguity.
\end{enumerate}
\end{remark}

The first statement in the next result is specific for the ordered context. We do not assume that $\bl$ is Dedekind complete, hence the existence of $\sup_{n}\rbs_\proj(\phi_n)$ is not automatic. By \cite[Lemma~1.24]{abrali}, it implies that $\rbs_\proj$ is a $\sigma$-order continuous map between the ordered vector spaces $\boundedset$ and $\regularbl$.

\begin{proposition}\label{prop:order_continuity_is_necessary}
Let $\sigmaset$ be a $\sigma$-algebra of subsets of a set $\set$, $\bl$ a
Banach lattice, and $\proj \colon \sigmaset\to\regularbl$ a positive spectral measure.
If $\phi\in\boundedset$ and $\lep\phi\rip_{n=1}^\infty\subset\boundedset$
is a bounded sequence in $\boundedset$ such that $\phi_n\uparrow\phi$ in
$\boundedset$, then $\rbs_\proj(\phi_n)\uparrow\rbs_\proj(\phi)$ in $\regularbl$, and
$\rbs_\proj(\phi)=\textup{WOT-}\lim_n\rbs_\proj(\phi_n)$.
\end{proposition}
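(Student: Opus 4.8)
The plan is to dispatch the two assertions separately, obtaining the WOT-convergence essentially for free from the results already established and reserving the genuine work for the order-theoretic supremum. For the WOT statement, I would first recall that $\phi_n\uparrow\phi$ in $\boundedset$ means precisely that $\phi_n(\setelt)\uparrow\phi(\setelt)$ for every $\setelt\in\set$, so in particular $\phi_n\to\phi$ pointwise on $\set$; since $(\phi_n)_{n=1}^\infty$ is assumed bounded, Theorem~\ref{thm:generated_positive_representation}(4) applies verbatim and yields $\rbs_\proj(\phi)=\textup{WOT-}\lim_n\rbs_\proj(\phi_n)$, requiring no further comment.

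For the order statement the strategy mirrors the proof of Lemma~\ref{lem:inf_and_sup_msets}. Monotonicity of the sequence $\rbs_\proj(\phi_n)$, and the fact that $\rbs_\proj(\phi)$ dominates every $\rbs_\proj(\phi_n)$, are both immediate from positivity of $\rbs_\proj$ together with $\phi_n\leq\phi_{n+1}\leq\phi$. It therefore remains to show that $\rbs_\proj(\phi)$ is the \emph{least} upper bound of $\{\rbs_\proj(\phi_n):n\geq 1\}$ in $\regularbl$. So I would suppose that $T\in\regularbl$ satisfies $\rbs_\proj(\phi_n)\leq T$ for all $n$, and fix $\condbothpossp$. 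Then $\mustandardpair$ is a positive finite measure, and Theorem~\ref{thm:generated_positive_representation}(3) gives $\pairing{\rbs_\proj(\phi_n)\elt,\dualelt}=\intts\phi_n\,d\mustandardpair$. Since $\phi_n\uparrow\phi$ pointwise, $(\phi_n)$ is bounded, and $\mustandardpair$ is finite and positive, the dominated convergence theorem gives $\intts\phi_n\,d\mustandardpair\to\intts\phi\,d\mustandardpair$, and this convergence is monotone increasing because $\mustandardpair\geq 0$ and $\phi_n\leq\phi_{n+1}$; hence $\sup_n\intts\phi_n\,d\mustandardpair=\intts\phi\,d\mustandardpair=\pairing{\rbs_\proj(\phi)\elt,\dualelt}$. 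Combining this with $\pairing{\rbs_\proj(\phi_n)\elt,\dualelt}\leq\pairing{T\elt,\dualelt}$ and taking the supremum over $n$ yields $\pairing{\rbs_\proj(\phi)\elt,\dualelt}\leq\pairing{T\elt,\dualelt}$. As this holds for all $\condbothpossp$, and since an operator in $\boundedbl$ is positive as soon as it is positive when tested against all pairs from $\blpos$ and $\dualblpos$, the operator $T-\rbs_\proj(\phi)$ is positive, i.e.\ $\rbs_\proj(\phi)\leq T$. Thus $\rbs_\proj(\phi)=\sup_n\rbs_\proj(\phi_n)$, which together with monotonicity is exactly $\rbs_\proj(\phi_n)\uparrow\rbs_\proj(\phi)$.

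The main conceptual point to watch is that $\bl$ is \emph{not} assumed Dedekind complete, so the supremum in $\regularbl$ is not known to exist a priori; the argument above sidesteps this difficulty by verifying the defining property of a supremum directly rather than by computing one, exactly as in Lemma~\ref{lem:inf_and_sup_msets}. The only mild technical subtlety is the passage to the limit under the integral sign: because the $\phi_n$ need not be nonnegative I would invoke dominated convergence (using the norm bound on $(\phi_n)$ and finiteness of $\mustandardpair$) rather than monotone convergence to secure the limit, while still using monotonicity of the $\phi_n$ and positivity of $\mustandardpair$ to identify that limit with the supremum. No serious obstacle is expected beyond this bookkeeping.
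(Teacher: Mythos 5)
Your proposal is correct and follows essentially the same route as the paper's own proof: positivity gives that $\rbs_\proj(\phi)$ is an upper bound, any competing upper bound $T$ is tested against pairs $\condbothpossp$ via the identity $\pairing{\rbs_\proj(\phi_n)\elt,\dualelt}=\intts\phi_n\,d\mustandardpair$, the dominated convergence theorem passes to the limit, and the WOT statement is quoted from part (4) of Theorem~\ref{thm:generated_positive_representation}. Your explicit remarks on why dominated (rather than monotone) convergence is the right tool and on testing positivity against positive pairs only make explicit what the paper leaves implicit; there is no substantive difference.
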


\begin{proof}
Clearly $\rbs_\proj(\phi)\geq\rbs_\proj(\phi_n)$ for all $n$. Suppose $T\in\regularbl$
and $T\geq\rbs_\proj(\phi_n)$ for all $n$. Then, for $\condbothpossp$, we
have $\pairing{T\elt,\dualelt}\geq\pairing{\rbs_\proj(\phi_n)\elt,
\dualelt}=\intts\phi_n\,d\mustandardpair$ for all $n$. The dominated convergence
theorem yields $\pairing{T\elt,\dualelt}\geq\intts\phi\,d\mustandardpair=\pairing{\rbs_\proj(\phi)\elt,\dualelt}$.
Hence $T\geq\rbs_\proj(\phi)$. We have shown that $\rbs(\phi_n)\uparrow\rbs(\phi)$;
the second statement follows from part (4) of Theorem~\ref{thm:generated_positive_representation}.
\end{proof}

Since $\proj(\mset)=\rbs_\proj(\charfunc_{\mset})$ ($\mset\in\sigmaset$), the map $P\mapsto\rbs_\proj$ is injective. This validates the choice of the definite article in the following definition.

\begin{definition}
Let $\sigmaset$ be a $\sigma$-algebra of subsets of a set $\set$, $\bl$ a Banach lattice, $\proj \colon \sigmaset\to\regularbl$ a positive spectral measure, and $\rbs_\proj \colon \boundedset \to\regularbl$ the positive representation of $\boundedset$ on $\bl$ as constructed above. Then we shall say that \emph{$\rbs_\proj$
is generated by $\proj$}, and that \emph{$\proj$ is the generating positive spectral measure of $\rbs_\proj$ on $\sigmaset$}.
\end{definition}

We shall now concentrate on the question as to which positive representations of $\boundedset$ on Banach lattices have a generating positive spectral measure on $\sigmaset$. This is not always the case \cite{dejjia}. A positive representation that has a generating positive spectral measure is bounded according to Theorem~\ref{thm:generated_positive_representation}, but this is not necessarily a distinguishing feature, as is shown by the next result on automatic boundedness.

\begin{proposition}\label{prop:automatic_continuity_set}
Let $\sigmaset$ be a $\sigma$-algebra of subsets of a set $\set$, $\bl$ a
Banach lattice, and $\rbs \colon \boundedset \to\regularbl$ a positive representation. Then:
\begin{enumerate}
 \item $\rbs \colon \boundedset \to\regularblwithnorm$ is bounded, and $\norm{\rbs}=\norm{\rbs(\onefunctionset)}$.
 \item If $\bl$ is Dedekind complete, then $\rbs \colon \boundedset \to\regularblwithregnorm$ is bounded, and $\norm{\rbs}=\regnorm{\rbs}=\norm{\rbs(\onefunctionset)}$.
\end{enumerate}
\end{proposition}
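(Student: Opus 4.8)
The plan is to build everything on the single order relation $-\onefunctionset\leq\phi\leq\onefunctionset$, which holds for every $\phi\in\boundedset$ with $\norm{\phi}\leq 1$, and to transport it through the positive map $\rbs$. Since $\phi=\positive{\phi}-\negative{\phi}$ with $\positive{\phi},\negative{\phi}\in\positive{\boundedset}$, positivity of $\rbs$ gives $\rbs(\phi)=\rbs(\positive\phi)-\rbs(\negative\phi)\in\regularbl$, so the codomain needs no further comment. Applying $\rbs$ to $-\onefunctionset\leq\phi\leq\onefunctionset$ yields $-\rbs(\onefunctionset)\leq\rbs(\phi)\leq\rbs(\onefunctionset)$ in $\regularbl$, and this inequality is the engine for both parts.

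For part (1), the subtlety is that $\bl$ need not be Dedekind complete, so $\abs{\rbs(\phi)}$ need not exist in $\regularbl$ and the regular norm is unavailable; moreover the formula computing the operator norm on positive vectors is known only for \emph{positive} operators, whereas $\rbs(\phi)$ is merely a difference of such. I would therefore argue directly on vectors. Fix $\condelt$ and split $\elt=\positive\elt-\negative\elt$. Applying the order bounds on $\rbs(\phi)$ to $\positive\elt,\negative\elt\in\blpos$ gives $\abs{\rbs(\phi)\positive\elt}\leq\rbs(\onefunctionset)\positive\elt$ and $\abs{\rbs(\phi)\negative\elt}\leq\rbs(\onefunctionset)\negative\elt$, whence the triangle inequality for the modulus in $\bl$ yields $\abs{\rbs(\phi)\elt}\leq\rbs(\onefunctionset)\abs{\elt}$. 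Taking norms and using monotonicity of the lattice norm together with $\norm{\abs{\elt}}=\norm{\elt}$ gives $\norm{\rbs(\phi)\elt}\leq\norm{\rbs(\onefunctionset)}\norm{\elt}$, hence $\norm{\rbs(\phi)}\leq\norm{\rbs(\onefunctionset)}$. Taking the supremum over $\norm{\phi}\leq 1$ shows $\norm{\rbs}\leq\norm{\rbs(\onefunctionset)}$; the reverse inequality is immediate from $\norm{\onefunctionset}=1$, so $\norm{\rbs}=\norm{\rbs(\onefunctionset)}$.

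For part (2), $\regularbl$ is now a Banach lattice under $\regnormfunction$, so the same order relation can be read lattice-theoretically: $-\rbs(\onefunctionset)\leq\rbs(\phi)\leq\rbs(\onefunctionset)$ gives $\abs{\rbs(\phi)}\leq\rbs(\onefunctionset)$ in $\regularbl$. Monotonicity of the operator norm on $\positive{\regularbl}$ then yields $\regnorm{\rbs(\phi)}=\norm{\abs{\rbs(\phi)}}\leq\norm{\rbs(\onefunctionset)}$ whenever $\norm{\phi}\leq 1$, so $\regnorm{\rbs}\leq\norm{\rbs(\onefunctionset)}$; since $\rbs(\onefunctionset)\geq 0$ we have $\regnorm{\rbs(\onefunctionset)}=\norm{\rbs(\onefunctionset)}$, which gives the reverse inequality and hence $\regnorm{\rbs}=\norm{\rbs(\onefunctionset)}$. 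Combined with part (1) this is the asserted chain $\norm{\rbs}=\regnorm{\rbs}=\norm{\rbs(\onefunctionset)}$.

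The only genuinely delicate point is the one flagged above: in part (1) one cannot fall back on the lattice structure of $\regularbl$, and the monotonicity formula for the operator norm applies a priori only to positive operators. The device that circumvents this is the vectorwise estimate $\abs{\rbs(\phi)\elt}\leq\rbs(\onefunctionset)\abs{\elt}$ extracted from the decomposition $\elt=\positive\elt-\negative\elt$; once that is in hand, everything else is routine.
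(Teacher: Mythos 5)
Your proof is correct, and in part (1) it takes a genuinely different route from the paper's. The paper first gets only the crude bound $\norm{\rbs(\phi)}\leq 2\norm{\rbs(\onefunctionset)}\norm{\phi}$ from the decomposition $\phi=\positive{\phi}-\negative{\phi}$, and then removes the factor $2$ by a separate argument on the dense subspace $\simpleset$: writing a simple function over a disjoint measurable partition, it runs a weak-duality computation (modelled on Lemma~\ref{lem:total_variation_estimate}, applied to the finitely additive operator-valued measure $\mset\mapsto\rbs(\charfunc_\mset)$) to obtain $\sum_{i}\abs{\pairing{\rbs(\charfunc_{\mset_i})\elt,\dualelt}}\leq\norm{\rbs(\onefunctionset)}\norm{\elt}\norm{\dualelt}$, and concludes by density of $\simpleset$ in $\boundedset$. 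You bypass simple functions, density, and duality altogether: applying the operator inequalities $-\rbs(\onefunctionset)\leq\rbs(\phi)\leq\rbs(\onefunctionset)$ (valid for $\norm{\phi}\leq 1$) to $\positive{\elt}$ and $\negative{\elt}$ separately and adding gives the vector estimate $\abs{\rbs(\phi)\elt}\leq\rbs(\onefunctionset)\abs{\elt}$, whence $\norm{\rbs(\phi)\elt}\leq\norm{\rbs(\onefunctionset)}\norm{\elt}$ for \emph{all} $\phi$ in the unit ball and all $\elt\in\bl$ at once; this yields the sharp equality $\norm{\rbs}=\norm{\rbs(\onefunctionset)}$ in one stroke. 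Your argument is shorter and more self-contained; what the paper's detour buys is the reuse of the partition/duality computation that it needs anyway in the discussion preceding Theorem~\ref{thm:generated_positive_representation}, so that the proof there is a one-line adaptation. Your part (2) is essentially the paper's: both derive a bound of the form $\abs{\rbs(\phi)}\leq\norm{\phi}\,\rbs(\onefunctionset)$ in the lattice $\regularbl$ and invoke monotonicity of the operator norm on $\positive{\regularbl}$. You also correctly identify, and correctly circumvent, the one delicate point: in part (1) the modulus $\abs{\rbs(\phi)}$ need not exist and the norm formula on positive vectors applies only to positive operators, which is exactly why a vectorwise domination argument (or the paper's duality argument) is needed there.
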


\begin{proof}
As to (1), let $\phi\in\positive{\boundedset}$. Since $\zerofunctionset\leq\phi\leq\norm{\phi}\onefunctionset$ and $\rbs$ is positive, we have $0\leq\rbs(\phi)\leq\norm{\phi}\rbs(\onefunctionset)$. Hence $\norm{\rbs(\phi)}\leq\norm{\rbs(\onefunctionset)}\norm{\phi}$. For general $\phi=\positive{\phi}-\negative{\phi}\in\boundedset$, this implies that $\norm{\rbs(\phi)}\leq\norm{\rbs(\onefunctionset)}\,(\norm{\positive{\phi}}+ \norm{\negative{\phi}})\leq 2 \norm{\rbs(\onefunctionset)}\norm{\phi}$.  In particular, $\rbs \colon \boundedset \to\regularblwithnorm$ is bounded. For the statement concerning $\norm{\rbs}$ it is sufficient to show that $\norm{\rbs(\phi)}\leq\norm{\rbs(\onefunctionset)}\norm{\phi}$ for all $\phi$ in the dense subspace $\simpleset$ of $\boundedset$. As to this, we first note that the map $\mset\mapsto\rbs(\charfunc_\mset)$ is a positive operator-valued finitely additive  measure on $\sigmaset$. Proceeding as in the proof of Lemma~\ref{lem:total_variation_estimate}, one then sees that $\sum_{i=1}^n \abs{\pairing{\rct(\charfunc_{\mset_i})\elt,\dualelt}}\leq\pairing{\rct(\onefunctionset)\abs{\elt},\abs{\dualelt}}\leq\norm{\rct(\onefunctionset)}\norm{\elt}\norm{\dualelt}$ for all measurable disjoint partitions $X=\bigcup_{i=1}^n \mset_i$ of $\set$ and all $\condbothsp$. Taking the $\mset_i$ in $\phi=\sum_{i=1}^n \alpha_i\charfunc_{\mset_i}\in\boundedset$ to be a measurable disjoint partition of $\set$, this implies, as in the discussion preceding Theorem~\ref{thm:generated_positive_representation}, that

\begin{align*}
\norm{\rbs(\phi)}&=\sup\leb \abs{\pairing{\sum_{i=1}^n\alpha_i\rct(\charfunc_{\mset_i})\elt,\dualelt}}:\elt\in\bl,\,\norm{\elt}\leq 1,\,\dualelt\in\dualbl,\,\norm{\dualelt}\leq 1\rib \\
&\leq\sup\leb\sum_{i=1}^n \abs{\alpha_i}\abs{\pairing{\rct(\charfunc_{\mset_i})\elt,\dualelt}}:\elt\in\bl,\,\norm{\elt}\leq 1,\,\dualelt\in\dualbl,\,\norm{\dualelt}\leq 1\rib \\
&\leq\norm{\phi}\sup\leb\sum_{i=1}^n \abs{\pairing{\rct(\charfunc_{\mset_i})\elt,\dualelt}}:\elt\in\bl,\,\norm{\elt}\leq 1,\,\dualelt\in\dualbl,\,\norm{\dualelt}\leq 1\rib \\
&\leq\norm{\phi} \norm{\rbs(\onefunctionset)}.
\end{align*}

For part (2), let $\phi\in\boundedset$. Since $\abs{\phi}\leq\norm{\phi}\onefunctionset$, we have $\rbs(\abs{\phi})\leq\norm{\phi}\rbs(\onefunctionset)$. Therefore $0\leq\abs{\rbs(\phi)}\leq\rbs(\abs{\phi})\leq\norm{\phi}\rbs(\onefunctionset)$, which implies $\regnorm{\rbs(\phi)}=\norm{\abs{\rbs(\phi)}}\leq\norm{\norm{\phi}\rbs(\onefunctionset)}=\norm{\rbs(\onefunctionset)}\norm{\phi}$. This shows that $\rbs \colon \boundedset \to\regularblwithregnorm$ is bounded (which also follows from the automatic continuity of positive maps between Banach lattices), and that $\regnorm{\rbs}=\norm{\rbs(\onefunctionset)}$.
\end{proof}

\begin{remark}\label{rem:preview_2}
If $\bl$ is Dedekind complete, then it is well known that the operator norm and the regular norm coincide on the center $\centerbl$ of $\bl$. If $\rbs(\boundedset)\subset\centerbl$, the equality of $\norm{\rbs}$ and $\regnorm{\rbs}$, as asserted in Proposition~\ref{prop:automatic_continuity_set}, is then a priori clear. For general (not necessarily central) positive representations of
$\boundedset$ this equality may seem somewhat surprising, but also in this case more can be said \cite{dejjia}.
 \end{remark}

With the automatic boundedness available from Proposition~\ref{prop:automatic_continuity_set}, we can now give a description of the positive representations of $\boundedset$ that have a generating positive spectral measure on $\sigmaset$.

\begin{proposition}\label{prop:those_that_have_a_generating_spectral_measure}
Let $\sigmaset$ be a $\sigma$-algebra of subsets of a set $\set$, $\bl$ a
Banach lattice, and $\rbs \colon \boundedset \to\regularbl$ a positive representation.
Then the following are equivalent:
\begin{enumerate}
 \item $\rbs$ has a generating positive spectral measure on $\sigmaset$.
 \item If $(\mset_n)_{n=1}^{\infty}$ are pairwise disjoint elements of $\sigmaset$, and $\condeltpos$, then
 \[
 \rbs\lep\charfunc_{\cup_{n=1}^{\infty} \mset_n}\rip\elt=\sum_{n=1}^{\infty} \rbs(\charfunc_{\mset_n}) \elt,
 \]
 where the series converges in the norm topology.
 \item If $(\mset_n)_{n=1}^{\infty}$ are pairwise disjoint elements of $\sigmaset$, and $\condbothpossp$,  then
 \[\pairing{\rbs\lep\charfunc_{\cup_{n=1}^{\infty} \mset_n}\rip\elt,\dualelt}=\sum_{n=1}^{\infty}\pairing{ \rbs(\charfunc_{\mset_n}) \elt,\dualelt}.
 \]
\end{enumerate}
In that case, the generating positive spectral measure $\proj$ on $\sigmaset$ of $\rbs$ is given by $\proj(\mset)=\rbs(\charfunc_{\mset})$ \ulp $\mset\in\sigmaset$\urp.
\end{proposition}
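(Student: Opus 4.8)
The plan is to fix the candidate $\proj(\mset):=\rbs(\charfunc_{\mset})$ ($\mset\in\sigmaset$) and to prove the cycle of implications $(1)\Rightarrow(2)\Rightarrow(3)\Rightarrow(1)$, with the final formula for the generating measure emerging from the proof of $(3)\Rightarrow(1)$. The first two implications are short. For $(1)\Rightarrow(2)$, if $\rbs$ has a generating positive spectral measure, then by the preceding definition that measure $\proj$ satisfies $\rbs=\rbs_\proj$ and $\proj(\mset)=\rbs_\proj(\charfunc_{\mset})=\rbs(\charfunc_{\mset})$; statement $(2)$ is then precisely property $(4)$ of $\proj$ in Definition~\ref{def:positive_spectral_measure}, specialised to $\condeltpos$. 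For $(2)\Rightarrow(3)$, I would simply note that norm convergence of a series implies its weak convergence, so pairing the identity in $(2)$ against a fixed $\conddualeltpos$ and using continuity of the pairing yields $(3)$.

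The substance lies in $(3)\Rightarrow(1)$. First I would verify the purely algebraic axioms of Definition~\ref{def:positive_spectral_measure} for $\proj(\mset)=\rbs(\charfunc_{\mset})$. Positivity and the projection property follow from $\charfunc_{\mset}\geq\zerofunctionset$, from $\charfunc_{\mset}^2=\charfunc_{\mset}$, and from the fact that $\rbs$ is a positive algebra homomorphism into $\regularbl$; similarly $\proj(\emptyset)=\rbs(\zerofunctionset)=0$, and $\proj(\mset_1\cap\mset_2)=\rbs(\charfunc_{\mset_1}\charfunc_{\mset_2})=\rbs(\charfunc_{\mset_1})\rbs(\charfunc_{\mset_2})=\proj(\mset_1)\proj(\mset_2)$. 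This settles properties $(1)$, $(2)$, and $(3)$ of the definition; what remains is the $\sigma$-additivity.

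To obtain the $\sigma$-additivity I would first upgrade the hypothesis $(3)$, which only concerns $\condbothpossp$, to the full weak $\sigma$-additivity $(4^\prime)$ of Remark~\ref{rem:Pettis_consequence} for arbitrary $\condboth$. This is carried out by decomposing $\elt=\positive{\elt}-\negative{\elt}$ in $\bl$ and $\dualelt=\positive{\dualelt}-\negative{\dualelt}$ in the Banach lattice $\dualbl$, expanding $\pairing{\rbs(\charfunc_{\cup_n\mset_n})\elt,\dualelt}$ into the four pairings of positive vectors against positive functionals, applying $(3)$ to each, and recombining the four convergent series. With $(4^\prime)$ available for all $\condboth$, the theorem of Pettis recorded in Remark~\ref{rem:Pettis_consequence} promotes weak to strong $\sigma$-additivity, so property $(4)$ holds and $\proj$ is a positive spectral measure. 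Finally, $\rbs_\proj(\charfunc_{\mset})=\proj(\mset)=\rbs(\charfunc_{\mset})$ shows that $\rbs_\proj$ and $\rbs$ agree on $\simpleset$; since both are bounded (the former by Theorem~\ref{thm:generated_positive_representation}, the latter by Proposition~\ref{prop:automatic_continuity_set}) and $\simpleset$ is dense in $\boundedset$, they coincide on all of $\boundedset$. Hence $\proj$ generates $\rbs$, which proves $(1)$ and at the same time gives the asserted formula $\proj(\mset)=\rbs(\charfunc_{\mset})$.

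The main obstacle is the passage from the cone-restricted hypothesis $(3)$ to genuine operator-valued $\sigma$-additivity. The splitting step is elementary, but it is exactly where the order structure is used essentially: positivity makes the hypothesis economical by allowing one to test $\sigma$-additivity only on the positive cones of $\bl$ and of $\dualbl$, and the lattice decomposition of both $\elt$ and $\dualelt$ is what bridges this to the weak $\sigma$-additivity demanded by Pettis' theorem. Everything downstream of that, namely the invocation of Pettis and the density argument, is then routine.
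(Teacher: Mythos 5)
Your proposal is correct and follows essentially the same route as the paper's proof: the same cycle $(1)\Rightarrow(2)\Rightarrow(3)\Rightarrow(1)$, the same candidate $\proj(\mset)=\rbs(\charfunc_{\mset})$, the extension of (3) from the positive cones to all of $\bl$ and $\dualbl$ by lattice decomposition, the appeal to Pettis' theorem via Remark~\ref{rem:Pettis_consequence}, and the final identification $\rbs_\proj=\rbs$ via agreement on $\simpleset$, boundedness, and density. The only difference is that you spell out the splitting into positive and negative parts, which the paper leaves as an observation.
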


\begin{proof}
Since the only possible generating positive spectral measure $\proj$ for $\rbs$ on $\sigmaset$ must be given by $\proj(\mset)=\rbs(\charfunc_{\mset})$ \ulp $\mset\in\sigmaset$\urp, it is clear that (1) implies (2). Clearly (2) implies (3). Assuming (3), we first observe that the equality in (3) then holds for all $\condboth$. We define $\proj(\mset)=\rbs(\charfunc_{\mset})$ $(\mset\in\sigmaset$). Then $P \colon \sigmaset\to\regularbl$ satisfies (1), (2), (3) and ($4^\prime$) in Definition~\ref{def:positive_spectral_measure} and Remark~\ref{rem:Pettis_consequence}, hence is a positive spectral measure on $\sigmaset$. It is clear that the positive representation $\rbs_\proj$ of $\boundedset$ on $\bl$ that is generated by $\proj$ agrees with $\rbs$ on $\simpleset$. Since both are bounded according to Proposition~\ref{prop:automatic_continuity_set}, $\rbs_\proj=\rbs$.
\end{proof}

For a fairly large practical class of lattices, the criterion for the existence of a generating positive spectral measure is particularly easy, and completely order-theoretical.

\begin{theorem}\label{thm:order_continuity_is_necessary_and_sufficient}
Let $\sigmaset$ be a $\sigma$-algebra of subsets of a set $\set$, $\bl$ a $\sigma$-Dedekind complete Banach lattice, and $\rbs \colon \sigmaset\to\regularbl$ a positive representation. If $\dualbl$ consists of $\sigma$-order continuous linear functionals only \ulp equivalently: if $\bl$ has $\sigma$-order continuous norm\urp, then the following are equivalent:
\begin{enumerate}
\item $\rbs$ has a generating positive spectral measure on $\sigmaset$.
\item $\rbs$ is $\sigma$-order continuous.
\end{enumerate}
\end{theorem}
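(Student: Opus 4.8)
The implication $(1)\Rightarrow(2)$ is essentially free. If $\rbs$ has a generating positive spectral measure $\proj$, then $\rbs=\rbs_\proj$, and Proposition~\ref{prop:order_continuity_is_necessary} states exactly that $\rbs_\proj(\phi_n)\uparrow\rbs_\proj(\phi)$ whenever $\phi_n\uparrow\phi$ in $\boundedset$. Such an increasing sequence with a supremum is order bounded, hence norm bounded, so the boundedness hypothesis of that proposition is automatic; thus $\rbs$ is $\sigma$-order continuous.

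For the substantial implication $(2)\Rightarrow(1)$ the plan is to verify criterion~(3) of Proposition~\ref{prop:those_that_have_a_generating_spectral_measure} for positive vectors and functionals, after which that proposition supplies the generating measure $\proj(\mset)=\rbs(\charfunc_{\mset})$. So fix pairwise disjoint $(\mset_n)_{n=1}^\infty$ in $\sigmaset$ and set $\phi_N=\charfunc_{\cup_{n=1}^N\mset_n}$ and $\phi=\charfunc_{\cup_{n=1}^\infty\mset_n}$, so that $\phi_N\uparrow\phi$ in $\boundedset$. Since $\rbs$ is $\sigma$-order continuous, $\rbs(\phi_N)\uparrow\rbs(\phi)$ in $\regularbl$. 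Because $\rbs(\phi_N)=\sum_{n=1}^N\rbs(\charfunc_{\mset_n})$, the whole matter reduces, for $\elt\in\blpos$ and $\dualelt\in\dualblpos$, to the single limit
\[
\pairing{\rbs(\phi_N)\elt,\dualelt}\longrightarrow\pairing{\rbs(\phi)\elt,\dualelt}.
\]

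This limit is where I expect the real work, and it is genuinely more than the order convergence $\rbs(\phi_N)\uparrow\rbs(\phi)$: suprema in $\regularbl$ need not be computed pointwise on $\bl$, so one cannot simply evaluate at $\elt$ and pair with $\dualelt$. What is really needed is that the positive functional $T\mapsto\pairing{T\elt,\dualelt}$ on $\regularbl$ is order continuous. To secure this I would first upgrade the hypotheses: a $\sigma$-Dedekind complete Banach lattice with $\sigma$-order continuous norm is in fact Dedekind complete and has order continuous norm \cite{mey} (for instance, order bounded disjoint sequences are then norm null, which characterises order continuity of the norm). Consequently $\regularbl$ is Dedekind complete, every $\dualelt\in\dualbl$ is fully order continuous, and the Riesz--Kantorovich calculus is available \cite{aliburpos}. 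Now put $Q_N:=\rbs(\phi)-\rbs(\phi_N)$, a decreasing sequence of positive operators with $\inf_N Q_N=0$ in $\regularbl$. The Riesz--Kantorovich description of the infimum of a downward directed family gives, in $\bl$,
\[
(\inf_N Q_N)\elt=\inf\leb\textstyle\sum_k Q_{n_k}\elt_k:\ \elt_k\in\blpos,\ \textstyle\sum_k\elt_k=\elt,\ n_k\in\N\rib=0,
\]
the infimum being over all finite positive partitions of $\elt$ and all choices of indices. Applying the order continuous functional $\dualelt$ turns the right-hand side into $\inf\{\sum_k\pairing{Q_{n_k}\elt_k,\dualelt}\}=0$. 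Hence, given $\varepsilon>0$, there are $\elt_1,\dots,\elt_m\in\blpos$ with $\sum_k\elt_k=\elt$ and indices $n_1,\dots,n_m$ such that $\sum_k\pairing{Q_{n_k}\elt_k,\dualelt}<\varepsilon$. Taking $N=\max_k n_k$ and using $Q_N\leq Q_{n_k}$ together with $\dualelt\geq 0$ yields $\pairing{Q_N\elt,\dualelt}=\sum_k\pairing{Q_N\elt_k,\dualelt}\leq\sum_k\pairing{Q_{n_k}\elt_k,\dualelt}<\varepsilon$; as $\pairing{Q_N\elt,\dualelt}$ decreases in $N$, it tends to $0$, which is the desired limit.

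With the displayed limit in hand, criterion~(3) of Proposition~\ref{prop:those_that_have_a_generating_spectral_measure} holds for all $\elt\in\blpos$ and $\dualelt\in\dualblpos$, and the proposition then produces the generating positive spectral measure $\proj$ with $\proj(\mset)=\rbs(\charfunc_{\mset})$. It is worth noting that the hypothesis on $\dualbl$ is used twice and essentially: to pass from $\sigma$-order continuity of the norm to order continuity (making both the Riesz--Kantorovich formula and the full order continuity of each $\dualelt$ available), and as the supply of test functionals in the criterion to be verified.
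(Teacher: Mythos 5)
Your proof is correct in substance and follows the paper's overall strategy: the implication \ulp 1\urp$\Rightarrow$\ulp 2\urp\ is read off from Proposition~\ref{prop:order_continuity_is_necessary} (with the same observation that norm boundedness of the sequence is automatic), and \ulp 2\urp$\Rightarrow$\ulp 1\urp\ is reduced to criterion \ulp 3\urp\ of Proposition~\ref{prop:those_that_have_a_generating_spectral_measure}. Where you genuinely diverge is in the crucial step, the passage from $\rbs(\phi_N)\uparrow\rbs(\phi)$ in $\regularbl$ to $\pairing{\rbs(\phi_N)\elt,\dualelt}\to\pairing{\rbs(\phi)\elt,\dualelt}$, and you correctly identify this as the heart of the matter. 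The paper stays at the $\sigma$-Dedekind complete level: it invokes a straightforward sequential modification of the proof of \cite[Theorem~1.18]{aliburpos} to get $\rbs(\phi_N)\elt\uparrow\rbs(\phi)\elt$ in $\bl$ for $\elt\in\blpos$, and then applies the $\sigma$-order continuous functionals. You instead first upgrade the hypotheses ($\sigma$-Dedekind completeness plus $\sigma$-order continuous norm imply Dedekind completeness and order continuity of the norm, which is indeed in \cite[Theorem~2.4.2]{mey}), so that $\regularbl$ is Dedekind complete and every $\dualelt\in\dualbl$ is fully order continuous, and then run a Riesz--Kantorovich argument. Both routes work; the paper's is more economical (it never needs the upgrade), while yours trades a proof modification for the invocation of a nontrivial classical theorem plus the full Dedekind complete calculus.

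One step of yours needs repair, though the repair is routine. From $(\inf_N Q_N)\elt=\inf\leb\sum_k Q_{n_k}\elt_k\rib=0$ in $\bl$ you ``apply the order continuous functional $\dualelt$'' to conclude that $\inf\leb\sum_k\pairing{Q_{n_k}\elt_k,\dualelt}\rib=0$. Order continuity of $\dualelt$ only allows an infimum to pass through for \emph{downward directed} sets; for a general set $S\subset\blpos$ with $\inf S=0$, the infimum of $\leb\pairing{s,\dualelt}:s\in S\rib$ can be strictly positive. Your partition-sum set is in fact downward directed (combine the Riesz decomposition property with taking maxima of the indices), but this must be argued. Better still, note that every partition sum dominates $Q_{N}\elt$ with $N=\max_k n_k$, while each $Q_N\elt$ is itself a trivial partition sum; hence your formula collapses to the pointwise one, $(\inf_N Q_N)\elt=\inf_N(Q_N\elt)=0$, i.e.\ $Q_N\elt\downarrow 0$ in $\bl$, and then $\sigma$-order continuity of $\dualelt$ applied to this decreasing sequence gives $\pairing{Q_N\elt,\dualelt}\downarrow 0$ directly, with no $\varepsilon$-argument and no partitions. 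This simplified version of your argument is essentially the paper's proof, with the pointwise evaluation of the infimum supplied by Dedekind completeness instead of by the sequential modification of \cite[Theorem~1.18]{aliburpos}.
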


\begin{proof}
We had already observed preceding Proposition~\ref{prop:order_continuity_is_necessary} that part (1) implies part (2), even without any further assumptions on $\bl$. For the converse implication, we verify the condition in part (3) of Proposition~\ref{prop:those_that_have_a_generating_spectral_measure}. In the pertinent notation, we let $\psi=\charfunc_{\bigcup_{n=1}^\infty\mset_n}$, and $\psi_N= \sum_{n=1}^N \charfunc_{\mset_n}$ ($N\geq 1$). Then $\psi_N\uparrow\psi$ in $\boundedset$, so $\rbs(\psi_N)\uparrow\rbs(\psi)$ in $\regularbl$ by the $\sigma$-order continuity of $\rbs$. By a straightforward modification of the proof of \cite[Theorem~1.18]{aliburpos}, the $\sigma$-Dedekind completeness of $\bl$ implies that $\rbs(\psi_N)\elt\uparrow\rbs(\psi)\elt$ for all $\condeltpos$. By the assumption on $\dualbl$ this implies that  $\pairing{\rbs(\psi_N)\elt,\dualelt}\uparrow\pairing{\rbs(\psi)\elt,\dualelt}$ for all $\condbothpossp$. That is, the condition in part (3) of Proposition~\ref{prop:those_that_have_a_generating_spectral_measure} is satisfied.
\end{proof}

\newpage

\begin{remark}\quad
\begin{enumerate}
\item Theorem~\ref{thm:order_continuity_is_necessary_and_sufficient} applies, in particular, to all Dedekind complete Banach lattices with order continuous
norm. A class of Banach lattice with a $\sigma$-order continuous norm that is not order continuous can be found in \cite[Example~7,
p.~46]{wnu}.
\item The equivalence involving the $\sigma$-order continuity can be found in \cite[p.~336]{zaa}.
\end{enumerate}
\end{remark}

\section{Positive $\contots$-representations generated by positive spectral
measures}\label{sec:existence_of_generating_regular_positive_spectral_measure}

In this section $\ts$ is a locally compact Hausdorff space. We shall investigate the relation between positive representations of $\contots$ on a Banach lattice $\bl$ and $\regularbl$-valued positive spectral measures on the Borel $\sigma$-algebra $\sigmats$ of $\ts$. One of the main results in this paper, Theorems~\ref{thm:generating_measure_KB}, is concerned with the existence of a generating regular positive spectral measure for such a representation. In most of the other results, such as Theorem~\ref{thm:construct_everything_from_rcts}, the existence of a generating (regular) positive spectral measure is merely assumed, and the relation between the representation and the spectral measure is studied.

We start with the following result on automatic boundedness of positive representations of $\contots$, in the same vein as Proposition~\ref{prop:automatic_continuity_set}.
\begin{proposition}\label{prop:automatic_continuity_ts}
Let $\ts$ be a locally compact Hausdorff space, $\bl$ a Banach lattice, and $\rct \colon \contots \to\regularbl$ a positive representation.
\begin{enumerate}
 \item  If $\ts$ is compact, and $\bl$ is not necessarily Dedekind complete, then:
\begin{enumerate}
 \item $\norm{\rct(\phi)}\leq\norm{\rct(\onefunctionts)}\,(\norm{\positive{\phi}}+ \norm{\negative{\phi}})$ \ulp $\phi\in\contots$\urp.
 \item $\rct \colon \contots \to\regularblwithnorm$ is bounded, and $\norm{\rct}\leq 2\norm{\rbs(\onefunctionts)}$.
\end{enumerate}
\item If $\ts$ is not necessarily compact, and $\bl$ is Dedekind complete, then the maps $\rct \colon \contots \to\regularblwithnorm$ and $\rct \colon \contots \to\regularblwithregnorm$ are both bounded, and $\norm{\rct}\leq\regnorm{\rct}$.
\item  If $\ts$ is compact, and $\bl$ is Dedekind complete, then $\rct\! \colon \contots\! \to\!\regularblwithnorm$ and $\rct \colon \contots \to\regularblwithregnorm$ are both bounded, and $\norm{\rct}=\regnorm{\rct}=\norm{\rct(\onefunctionts)}$.
\end{enumerate}
\end{proposition}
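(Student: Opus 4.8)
The plan is to take the three parts in turn, deriving part~(3) from parts~(1) and~(2). For part~(1), where $\ts$ is compact so that $\onefunctionts\in\contots$, I would first treat $\phi\in\positive{\contots}$. Since $\norm{\phi}\onefunctionts-\phi\in\positive{\contots}$ and $\rct$ is positive, applying $\rct$ gives $0\leq\rct(\phi)\leq\norm{\phi}\rct(\onefunctionts)$; the monotonicity of the operator norm on $\positive{\regularbl}$, recorded in Section~\ref{subsec:basics}, then yields $\norm{\rct(\phi)}\leq\norm{\phi}\norm{\rct(\onefunctionts)}$. Splitting a general $\phi=\positive{\phi}-\negative{\phi}$ and using the triangle inequality gives the estimate in (1)(a). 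Because $\norm{\positive{\phi}},\norm{\negative{\phi}}\leq\norm{\phi}$ in the Banach lattice $\contots$, part (1)(b), together with the bound $\norm{\rct}\leq 2\norm{\rct(\onefunctionts)}$, is then immediate.

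For part~(2), where $\bl$ is Dedekind complete but $\ts$ need not be compact, the unital trick above is unavailable since $\contots$ has no unit, so I would instead invoke automatic continuity. As $\bl$ is Dedekind complete, $\regularblwithregnorm$ is a Banach lattice, and $\rct\colon\contots\to\regularblwithregnorm$ is a positive linear map between Banach lattices, hence automatically bounded. The boundedness of $\rct\colon\contots\to\regularblwithnorm$ and the inequality $\norm{\rct}\leq\regnorm{\rct}$ then both follow from the fact, also recorded in Section~\ref{subsec:basics}, that the inclusion $\regularblwithregnorm\hookrightarrow\boundedblwithnorm$ has norm $1$: for every $\phi$ one has $\norm{\rct(\phi)}\leq\regnorm{\rct(\phi)}\leq\regnorm{\rct}\norm{\phi}$.

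For part~(3), combining compactness with Dedekind completeness, I would compute $\regnorm{\rct}$ explicitly. For $\phi\in\contots$ we have $\abs{\phi}\leq\norm{\phi}\onefunctionts$, and positivity of $\rct$ gives $\abs{\rct(\phi)}\leq\rct(\abs{\phi})\leq\norm{\phi}\rct(\onefunctionts)$ (the first inequality being the standard bound $\abs{T\phi}\leq T\abs{\phi}$ for the positive operator $\rct$ into the Riesz space $\regularbl$). Monotonicity of the operator norm then yields $\regnorm{\rct(\phi)}=\norm{\abs{\rct(\phi)}}\leq\norm{\phi}\norm{\rct(\onefunctionts)}$, so $\regnorm{\rct}\leq\norm{\rct(\onefunctionts)}$. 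For the reverse estimates I would evaluate at $\onefunctionts$, noting $\norm{\onefunctionts}=1$ and $\rct(\onefunctionts)\geq 0$, so that $\norm{\rct(\onefunctionts)}=\regnorm{\rct(\onefunctionts)}\leq\regnorm{\rct}$ and likewise $\norm{\rct(\onefunctionts)}\leq\norm{\rct}$. Combined with $\norm{\rct}\leq\regnorm{\rct}$ from part~(2), this produces the chain $\norm{\rct(\onefunctionts)}\leq\norm{\rct}\leq\regnorm{\rct}\leq\norm{\rct(\onefunctionts)}$, forcing all three quantities to coincide.

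The one genuinely delicate point is the transition in part~(2): without a unit the concrete bound by $\norm{\rct(\onefunctionts)}$ simply has no meaning, and the argument must pivot to the abstract automatic continuity of positive maps between Banach lattices. This is precisely where the Dedekind completeness of $\bl$ is essential, as it is what makes $\regularblwithregnorm$ a Banach lattice and hence brings that automatic-continuity result into play; everything else is a routine matter of positivity and norm monotonicity.
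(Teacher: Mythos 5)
Your proposal is correct and follows essentially the same route as the paper: part (1) via the unital domination $0\leq\rct(\phi)\leq\norm{\phi}\rct(\onefunctionts)$ and splitting $\phi=\positive{\phi}-\negative{\phi}$, part (2) via automatic boundedness of the positive map into the Banach lattice $\regularblwithregnorm$ combined with contractivity of the inclusion into $\boundedblwithnorm$, and part (3) via $\abs{\rct(\phi)}\leq\rct(\abs{\phi})\leq\norm{\phi}\rct(\onefunctionts)$. Your explicit sandwich $\norm{\rct(\onefunctionts)}\leq\norm{\rct}\leq\regnorm{\rct}\leq\norm{\rct(\onefunctionts)}$ merely spells out the evaluation-at-$\onefunctionts$ step that the paper leaves implicit when it refers back to the proof of Proposition~\ref{prop:automatic_continuity_set}.
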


\begin{proof}
Part (1) follows as in the beginning of the proof of part (1) of Proposition~\ref{prop:automatic_continuity_set}.

As to part (2), if $\bl$ is Dedekind complete, then $\regularblwithregnorm$ is a Banach lattice. Hence the positive map $\rct \colon
\contots \to\regularblwithregnorm$ is automatically bounded. Using the contractivity of the inclusion map $\regularblwithregnorm\hookrightarrow\regularblwithnorm$ completes the proof of part (2).

As to part (3), it follows as in the proof of part (2) of Proposition~\ref{prop:automatic_continuity_set} that $\rct \colon
\contots \to\regularblwithregnorm$ is bounded and that $\regnorm{\rct}=\norm{\rct(\onefunctionts)}$. The contractivity of the inclusion map $\regularblwithregnorm\hookrightarrow\regularblwithnorm$ then implies that $\norm{\rbs}=\norm{\rbs(\onefunctionset)}$.

\end{proof}

At the time of writing we have no information for the case where $\ts$ is not compact and $\bl$ is not Dedekind complete, unless we assume that $\rct$ has a generating positive spectral measure (cf.\ Proposition~\ref{prop:norm_for_generated_representation}).

The following definition is the usual one.
\begin{definition}\label{def:regular_spectral_measure}
Let $\ts$ be a locally compact Hausdorff space with Borel $\sigma$-algebra $\sigmats$, $\bl$ a Banach lattice, and $\proj \colon \sigmats\to\regularbl$ a positive spectral measure. Then $\proj$ is \emph{regular} if the finite signed measure $\mustandardpair \colon \sigmats\to\R$, defined by $\mustandardpair(\mset)=\pairing{\proj(\mset)\elt,\dualelt}$ ($\mset\in\sigmats$), is a regular finite signed Borel measure for all $\condbothsp$.
\end{definition}

Interestingly enough, a regular positive spectral measure is also inner and outer regular on all elements of $\sigmats$ in a natural sense that is meaningful only in an ordered context.

\begin{proposition}\label{prop:regularity_of_the_spectral_measure}
Let $\ts$ be a locally compact Hausdorff space with Borel $\sigma$-algebra
$\sigmats$, $\bl$ a Banach lattice, and $\proj \colon \sigmats \to\regularbl$ a regular positive spectral measure. Then, for all $\mset\in\sigmats$,
\begin{enumerate}
\item $\proj(\mset)=\inf\leb \proj(V): V \textup{ open and }\mset\subset V\rib$ in $\regularbl$.
 \item $\proj(\mset)=\sup\leb \proj(K): K \textup{ compact and }K\subset\mset\rib$ in $\regularbl$.
 \end{enumerate}
\end{proposition}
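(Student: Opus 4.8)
The plan is to reduce both identities to the scalar-valued regularity of the associated measures $\mustandardpair$ and then to lift them to the operator level by means of Lemma~\ref{lem:inf_and_sup_msets}, which was proved precisely for this purpose. Throughout I fix $\mset\in\sigmats$ and take as index family, for part~(1), the open sets $V$ with $\mset\subset V$, and, for part~(2), the compact sets $K$ with $K\subset\mset$. The containment hypotheses~(a) in both parts of Lemma~\ref{lem:inf_and_sup_msets} hold by construction, so the whole matter comes down to verifying the corresponding scalar hypotheses~(b).

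To this end I would first record that for $\condbothpossp$ the measure $\mustandardpair$ is positive, as was observed just before Lemma~\ref{lem:total_variation_estimate}, and that, by the hypothesis that $\proj$ is regular, it is moreover a regular finite Borel measure. For part~(1) the required equality
\[
\mustandardpair(\mset)=\inf\leb \mustandardpair(V): V \textup{ open and }\mset\subset V\rib
\]
is simply outer regularity on $\mset$, which is part of the very definition of a regular Borel measure. For part~(2) the required equality
\[
\mustandardpair(\mset)=\sup\leb \mustandardpair(K): K \textup{ compact and }K\subset\mset\rib
\]
is inner regularity on the \emph{arbitrary} Borel set $\mset$; this is not immediate from the definition, which only furnishes inner regularity on open sets, but it is supplied by Theorem~\ref{thm:overview_theorem}(2), according to which every positive element of $\boundedregmeasts$ is inner regular on all of $\sigmats$.

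Once hypothesis~(b) is in hand in each case, parts~(1) and~(2) of Lemma~\ref{lem:inf_and_sup_msets} deliver the two asserted identities in $\regularbl$ at once, including the existence of the infimum and the supremum even though $\bl$ is not assumed to be Dedekind complete.

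I do not expect a genuine obstacle here: Lemma~\ref{lem:inf_and_sup_msets} already encapsulates the non-trivial step of passing from the weak (scalar) order information carried by the $\mustandardpair$ to an order relation between the operators $\proj(\cdot)$ themselves. The only point that needs a little care is to invoke the correct form of regularity in each part---outer regularity on all Borel sets, which is built into the definition, for part~(1), versus inner regularity on all Borel sets, which must be imported from Theorem~\ref{thm:overview_theorem}(2), for part~(2).
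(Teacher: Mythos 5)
Your proposal is correct and follows essentially the same route as the paper's proof: reduce to the scalar measures $\mustandardpair$ for $\condbothpossp$, use the definition of regularity for outer regularity and Theorem~\ref{thm:overview_theorem}(2) for inner regularity on arbitrary Borel sets, and then lift to $\regularbl$ via Lemma~\ref{lem:inf_and_sup_msets}. Your explicit flagging of the asymmetry\textemdash outer regularity on all Borel sets being definitional while inner regularity on all Borel sets must be imported\textemdash is exactly the point the paper's (terser) proof also relies on.
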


We do not assume that $\bl$ is Dedekind complete, hence the existence of the infimum and supremum is not automatic.

\begin{proof}
If $\condbothpossp$, then the finite measure $\mustandardpair$ is a regular Borel measure by assumption. Part (2) of Theorem~\ref{thm:overview_theorem} shows that it is not only outer regular on all elements of $\sigmats$, but also inner regular on all elements of $\sigmats$. An appeal to Lemma~\ref{lem:inf_and_sup_msets} then finishes the proof.
\end{proof}

In view of the results in Section~\ref{sec:from_positive_spectral_measure_to_positive_representation}, the following usual definition is natural.

\begin{definition}
Let $\ts$ be a locally compact Hausdorff space with Borel $\sigma$-algebra $\sigmats$, $\bl$ a Banach lattice, and $\rct \colon \contots \to\regularbl$ a positive representation of $\contots$ on $\bl$. If $\proj \colon \sigmats \to\regularbl$ is a positive spectral measure on $\sigmats$, Section~\ref{sec:from_positive_spectral_measure_to_positive_representation} furnishes the positive representation $\rbt_\proj \colon \boundedts \to\regularbl$ of $\boundedset$ on $\bl$ that is generated by $\proj$. We say that \emph{$\proj$ generates $\rct$} if $\rct$ is the restriction of $\rct_\proj$ to $\contots$. If $\proj$ is a regular positive spectral measure on $\sigmats$ generating $\rct$, we say that $\rct$ \emph{has a generating regular positive spectral measure on $\sigmats$}.
\end{definition}

\begin{remark}\label{rem:uniqueness_and_automatic_regularity}
\quad
\begin{enumerate}
\item If $\rct$ has a generating regular positive spectral measure on $\sigmats$, then it is unique. This is immediate from \eqref{eq:weak_def} and the uniqueness statement in the Riesz representation theorem.
 \item For the sake of completeness, we note that every Borel measure is automatically both outer and inner regular on all elements of $\sigmats$ if every open subset of $\ts$ is $\sigma$-compact, i.e.\ if  it is the countable union of compact subsets of $\ts$ \cite[Theorem~7.8]{fol}. For such spaces (in particular, for all second countable spaces), every positive spectral measure on $\sigmats$ is therefore automatically regular.
\end{enumerate}
\end{remark}

We shall now establish an existence result for generating regular positive spectral measures.

Recall that a Banach lattice is a KB-space if every non-negative increasing norm bounded sequence is norm convergent. Reflexive Banach lattices and AL-spaces are KB-spaces \cite[p.~232]{aliburpos}. A KB-space has order continuous norm \cite[Theorem~2.4.2]{mey}, hence is Dedekind complete, and a Banach lattice $\bl$ is a KB-space if and only if $\bl$ is a band of $\bidualbl$ \cite[Theorem~4.60]{aliburpos}. The latter property is what makes our proof of the following theorem work.

\begin{theorem}\label{thm:generating_measure_KB}
Let $\ts$ be a locally compact Hausdorff space with Borel $\sigma$-algebra
$\sigmats$, $\bl$ a KB-space, and $\rct \colon \contots \to\regularbl$ a positive representation.

Then $\rct$ has a unique generating regular positive spectral measure $\proj$ on $\sigmats$.
\end{theorem}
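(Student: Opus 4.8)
The plan is to build $\proj$ directly from the matrix-coefficient measures of $\rct$, postponing the decisive use of the KB-property to the single step where one shows that these measures produce operators on $\bl$ rather than merely on $\bidualbl$. Since a KB-space is Dedekind complete, Proposition~\ref{prop:automatic_continuity_ts}(2) gives that $\rct$ is bounded. Hence for each $\condbothsp$ the functional $\phi\mapsto\pairing{\rct(\phi)\elt,\dualelt}$ on $\contots$ is bounded, and the Riesz representation theorem (Theorem~\ref{thm:overview_theorem}(1)) yields a unique $\mustandardpair\in\boundedregmeasts$ with $\pairing{\rct(\phi)\elt,\dualelt}=\intts\phi\,d\mustandardpair$ for all $\phi\in\contots$, with $\norm{\mustandardpair}\leq\norm{\rct}\norm{\elt}\norm{\dualelt}$, positive when $\condbothpossp$, and bilinear in $\standardpair$ by uniqueness. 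For fixed $\mset\in\sigmats$ the form $(\elt,\dualelt)\mapsto\mustandardpair(\mset)$ is then bilinear and bounded by $\norm{\rct}\norm{\elt}\norm{\dualelt}$, so it defines a positive operator $T_\mset\colon\bl\to\bidualbl$ via $\pairing{T_\mset\elt,\dualelt}=\mustandardpair(\mset)$.

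The heart of the proof, and the only place where being a KB-space is used, is to show that each $T_\mset$ maps $\bl$ into $\bl$, where $\bl$ is viewed as a band of $\bidualbl$. More generally, for $\psi\in\boundedts$ let $T_\psi\colon\bl\to\bidualbl$ be the operator determined by $\pairing{T_\psi\elt,\dualelt}=\intts\psi\,d\mustandardpair$, so that $T_{\charfunc_\mset}=T_\mset$. I would apply the monotone class theorem (Theorem~\ref{thm:monotone_class_theorem}) to the vector space $L$ of those $\psi\in\boundedts$ for which $T_\psi$ maps $\bl$ into $\bl$. The base case $\charfunc_V\in L$ for open $V$ is where regularity enters: for $\condbothpossp$, formula~\eqref{eq:open_versus_functions} gives $\pairing{T_V\elt,\dualelt}=\mustandardpair(V)=\sup_\phi\pairing{\rct(\phi)\elt,\dualelt}$, the supremum being over $\phi\in\contcts$ with $\supp\phi\subset V$ and $\zerofunctionts\leq\phi\leq\onefunctionts$. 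For $\condeltpos$ the net $\lep\rct(\phi)\elt\rip_\phi$ is increasing (its index set is directed by $\max$), lies in $\bl$, and is majorised in $\bidualbl$ by $T_V\elt$ (because $\phi\leq\charfunc_V$); as $\bidualbl$ is Dedekind complete it has a supremum $s\leq T_V\elt$ there, and testing against positive $\dualelt$ forces $s=T_V\elt$. Since $\bl$ is a band, hence order closed, in $\bidualbl$, this gives $T_V\elt\in\bl$ for $\condeltpos$, and then for all $\condelt$. The inductive step of the monotone class theorem is identical in spirit: if $\psi_n\in L$ and $0\leq\psi_n\uparrow\psi$ pointwise and boundedly, then for $\condeltpos$ the vectors $T_{\psi_n}\elt\in\bl$ increase in $\bidualbl$ to $T_\psi\elt$ (by monotone convergence for each $\mustandardpair$), so order closedness of the band yields $T_\psi\elt\in\bl$. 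Hence $\boundedts\subset L$, and in particular $\proj(\mset):=T_\mset\in\positive{\regularbl}$ for every $\mset\in\sigmats$.

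It then remains to check that $\proj$ is a regular positive spectral measure generating $\rct$. Positivity of each $\proj(\mset)$ is clear and $\proj(\emptyset)=0$ since $\mustandardpair(\emptyset)=0$; for $\sigma$-additivity it suffices, by the Pettis argument of Remark~\ref{rem:Pettis_consequence}, to verify condition $(4^\prime)$, which is immediate from countable additivity of the scalar measures $\mustandardpair$. Multiplicativity $\proj(\mset_1\cap\mset_2)=\proj(\mset_1)\proj(\mset_2)$ is the one identity that must be imported from the homomorphism property of $\rct$, and Lemma~\ref{lem:then_for_bounded_borel} is tailored for exactly this. For fixed $\phi\in\contots$, comparing on $\contots$ and invoking uniqueness in the Riesz representation theorem (with Lemma~\ref{lem:again_regular_measure} supplying regularity) identifies $\mupair{\elt,\rct(\phi)^\prime\dualelt}$ with the measure $\mset\mapsto\int_\mset\phi\,d\mustandardpair$; evaluating at $\mset_1$ and rewriting the left-hand side through the adjoint gives the commutation relation $\pairing{\rct(\phi)\proj(\mset_1)\elt,\dualelt}=\int_{\mset_1}\phi\,d\mustandardpair$ for all $\phi\in\contots$. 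Reading the left-hand side instead as $\intts\phi\,d\mupair{\proj(\mset_1)\elt,\dualelt}$ and applying Lemma~\ref{lem:then_for_bounded_borel} (with $\psi=\charfunc_{\mset_1}$), this equality persists for all $\phi\in\boundedts$; taking $\phi=\charfunc_{\mset_2}$ yields $\pairing{\proj(\mset_2)\proj(\mset_1)\elt,\dualelt}=\mustandardpair(\mset_1\cap\mset_2)=\pairing{\proj(\mset_1\cap\mset_2)\elt,\dualelt}$, so $\proj$ is multiplicative and each $\proj(\mset)$ is a projection. Regularity of $\proj$ is automatic, its associated measures being precisely the regular measures $\mustandardpair\in\boundedregmeasts$.

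Finally, that $\proj$ generates $\rct$ follows because the representation $\erbt$ of $\boundedts$ generated by $\proj$ is characterised by $\pairing{\erbt(\phi)\elt,\dualelt}=\intts\phi\,d\mustandardpair$ (Theorem~\ref{thm:generated_positive_representation}(3)); for $\phi\in\contots$ the right-hand side equals $\pairing{\rct(\phi)\elt,\dualelt}$ by the very definition of $\mustandardpair$, so $\erbt$ restricts to $\rct$ on $\contots$. Uniqueness is already recorded in Remark~\ref{rem:uniqueness_and_automatic_regularity}(1). The main obstacle is squarely the band step of the second paragraph: all the rest is bookkeeping with the scalar measures $\mustandardpair$, whereas the inclusion $T_\mset(\bl)\subset\bl$ genuinely requires $\bl$ to sit as an order-closed sublattice of $\bidualbl$, that is, to be a KB-space.
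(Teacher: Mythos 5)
Your proposal is correct and follows essentially the same route as the paper's proof: automatic boundedness from Dedekind completeness, Riesz representation for the matrix-coefficient measures $\mustandardpair$, operators into $\bidualbl$, the monotone class theorem with the band property of a KB-space doing the decisive work at $\charfunc_V$ via \eqref{eq:open_versus_functions}, Lemma~\ref{lem:then_for_bounded_borel} for multiplicativity, and regularity inherited from the $\mustandardpair$. The only (harmless) organizational differences are that you compute the relevant suprema vector-by-vector in $\bidualbl$ where the paper takes suprema of upward directed sets in $\bounded_{\textup{r}}(\bl,\bidualbl)$ and then evaluates them pointwise on $\blpos$, and that you verify the axioms of Definition~\ref{def:positive_spectral_measure} directly for $\proj(\mset)=T_{\charfunc_\mset}$ where the paper first establishes full multiplicativity of the extension on $\boundedts$ and then invokes Proposition~\ref{prop:those_that_have_a_generating_spectral_measure}.
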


\begin{proof}
The uniqueness of a generating regular positive spectral measure was already observed in the first part of Remark~\ref{rem:uniqueness_and_automatic_regularity}.

For its existence, we combine Theorem~\ref{thm:monotone_class_theorem} with ideas employed in the literature for unital representations of commutative $C^\ast$-algebras on Hilbert spaces \cite[Theorem~IX.1.4]{con} \cite[Theorem~12.22]{rud}, and for bounded unital representations of $\contots$ (where $\ts$ is compact) on Banach spaces \cite[Theorem~III.3]{ric}. The strategy is, as usual, to construct a (positive) representation $\erbttemp \colon \boundedts\to\regularbl$ that extends $\rct$, and then show that $\erbttemp$ has a generating regular (positive) spectral measure $\proj$ on $\sigmats$, so that one can actually write $\erbttemp=\erbt$.

 We start with the construction of $\erbttemp$.

 Since we had already observed that the KB-space $\bl$ is Dedekind complete, part (2) of Proposition~\ref{prop:automatic_continuity_ts} implies that $\rct \colon \contots \to\regularblwithnorm$ is bounded.

 Let $\condbothsp$, and consider the linear functional $\phi\mapsto\pairing{\rct(\phi)\elt,\dualelt}$ on $\contots$. We have
 \[
 \abs{\pairing{\rct(\phi)\elt,\dualelt}}\leq\norm{\rct}\norm{\phi}\norm{\elt}\norm{\dualelt}\quad(\phi\in\contots,\,\condbothsp).
 \]
 Consequently, this functional is bounded and has norm at most $\norm{\rct}\norm{\elt}\norm{\dualelt}$.
 The Riesz representation theorem furnishes a regular finite signed Borel measure $\mustandardpair$ such that
 \begin{equation}\label{eq:definition_of_measures}
 \pairing{\rct(\phi)\elt,\dualelt}=\intts\phi\,d\mustandardpair\quad(\phi\in\contots,\,\condbothsp).
 \end{equation}
 Moreover, $\norm{\mustandardpair}\leq\norm{\rct}\norm{\elt}\norm{\dualelt}$, and if $\condbothpossp$, then $\mustandardpair\geq 0$ as a consequence of the positivity of $\rct$. As a consequence of the uniqueness statement in the Riesz representation theorem, the map $(\elt,\dualelt)\mapsto\mustandardpair$ is bilinear. This implies that, for fixed $\phi\in\boundedts$, the form $\leh\,.\,,\,.\,\rih_\phi$ on $\bl\times\dualbl$, defined by
 \begin{equation*}
 \leh\standardpair\rih_\phi=\intts\phi\,d \mustandardpair\quad(\condbothsp),
 \end{equation*}
 is also bilinear. Moreover, $\abs{\leh\standardpair\rih_\phi}\leq \norm{\phi}\norm{\mustandardpair}\leq\norm{\phi}\norm{\pi}\norm{\elt}\norm{\dualelt}$. Hence $\leh\,.\,,\,.\,\rih_\phi$ is a bounded bilinear form on $\bl\times\dualbl$, and this implies that there exists a unique operator $\erbttemp(\phi)\in\bounded(\bl,\bidualbl)$ such that $\lea\erbttemp(\phi)\standardpair\ria=\leh\standardpair\rih_\phi$ for all $\condbothsp$. Hence
 \begin{equation}\label{eq:basic_relation_for_extension}
 \lea\erbttemp(\phi)\standardpair\ria=\intts\phi\,d \mustandardpair\quad(\phi\in\boundedts,\,\condbothsp).
 \end{equation}

 We shall now use the fact that $\bl$ is a band of $\bidualbl$ and Theorem~\ref{thm:monotone_class_theorem} to see that actually $\erbttemp(\phi)(\bl)\subset\bl$ (rather than $\bidualbl$) for all $\phi\in\boundedts$.

 To start, since $\mustandardpair\geq 0$ for $\condbothpossp$, \eqref{eq:basic_relation_for_extension} shows that $\erbttemp(\phi)\elt\geq 0$ if $\phi\geq 0$ and $\elt\in\blpos$. This implies that $\erbttemp(\phi)\in\bounded_{\textup{r}}(\bl,\bidualbl)$ ($\phi\in\boundedts$), and that $\erbttemp \colon \boundedts\to\bounded_{\textup{r}}(\bl,\bidualbl)$ is a (clearly linear) positive map. Furthermore, comparing \eqref{eq:definition_of_measures} and \eqref{eq:basic_relation_for_extension},
we see that $\erbttemp \colon \boundedts \to\bounded_{\textup{r}}(\bl,\bidualbl)$ extends $\rct \colon \contots \to\regularbl$, where we identify $\regularbl$ with the canonically corresponding subspace of $\bounded_{\textup{r}}(\bl,\bidualbl)$.
We let
 \[
 L=\leb\phi\in\boundedts : \erbttemp(\phi)(\bl)\subset \bl \rib.
 \]
 Since $\erbttemp$ extends $\rct$, we already know that $\contots\subset L$. We shall proceed to show that $L$ satisfies the two hypotheses in Theorem~\ref{thm:monotone_class_theorem}, so that actually $L=\boundedts$. As to the first hypothesis, we need to show that $\erbttemp(\charfunc_V)(\bl)\subset\bl$ for all open subsets $V$ of $\ts$. We may assume that $V\neq\emptyset$, and in that case we claim that
\begin{equation}\label{eq:sup_claim}
\erbttemp(\charfunc_V)=\sup \leb\erbttemp(\phi) : \phi\in\contcts,\,\supp \phi\subset V,\,\zerofunctionts\leq \phi\leq\onefunctionts\rib,
\end{equation}
where the right hand side is the supremum in the Dedekind complete Banach lattice $\bounded_{\textup{r}}(\bl,\bidualbl)$. Since $\erbttemp\geq 0$, it is clear that $\erbttemp(\charfunc_V)\geq\erbttemp(\phi)$ if $\phi\in\contcts$, $\supp \phi\subset V$, and $\zerofunctionts\leq \phi\leq\onefunctionts$. Suppose that $T\in\bounded_{\textup{r}}(\bl,\bidualbl)$ and that $T\geq\erbttemp(\phi)$ for all $\phi\in\contcts$ such that $\supp \phi\subset V$ and $\zerofunctionts\leq \phi\leq\onefunctionts$. Then from \eqref{eq:basic_relation_for_extension} we have, for all such $\phi$, and all $\condbothpossp$,
\begin{align*}
\pairing{T\elt,\dualelt}&\geq\pairing{\erbttemp(\phi)\elt,\dualelt}\\
&=\intts\phi\,d\mustandardpair.
\end{align*}
Therefore, for all $\condbothpossp$,
\begin{equation*}
\pairing{T\elt,\dualelt}\geq\sup\leb\intts\phi\,d\mustandardpair:\phi\in\contcts,\,\supp\phi\subset V,\,\zerofunctionts\leq\phi\leq\onefunctionts\rib.
\end{equation*}
 Now \eqref{eq:open_versus_functions} shows that the right hand side in this equation equals $\mustandardpair(V)$. We conclude that, for $\condbothpossp$,
 \begin{align*}
 \pairing{T\elt,\dualelt}&\geq\mustandardpair(V)\\
 &=\intts\charfunc_V\,d\mustandardpair\\
 &=\pairing{\erbttemp(\charfunc_V)\elt,\dualelt}.
 \end{align*}
Hence $T\geq\erbttemp(\charfunc_V)$, and our claim in \eqref{eq:sup_claim} has been established.

Since $\leb\phi : \phi\in\contcts,\,\supp\phi\subset V,\, \zerofunctionts\leq \phi\leq\onefunctionts\rib$ is directed upward and $\erbttemp$ is positive,  $\leb\erbttemp(\phi) : \phi\in\contcts,\,\supp \phi\subset V,\, \zerofunctionts\leq \phi\leq\onefunctionts\rib$ is a subset of $\bounded_{\textup{r}}(\bl,\bidualbl)$ that is directed upward. Hence its supremum $\erbttemp(\charfunc_V)$ can be determined pointwise on the positive cone of $\bl$ \cite[Theorem~1.19]{aliburpos}, and we conclude that
\[
\erbttemp(\charfunc_V)x=\sup\leb\erbttemp(\phi)\elt : \phi\in\contcts,\,\supp \phi\subset V,\, \zerofunctionts\leq \phi\leq\onefunctionts\rib\quad(\elt\in\blpos),
\]
where the supremum is in $\bidualbl$. However, since we had already observed that $\contots\subset L$, we know that $\leb\erbttemp(\phi)\elt : \phi\in\contcts,\,\supp \phi\subset V,\, \zerofunctionts\leq \phi\leq\onefunctionts\rib\subset\bl$ ($\elt\in\blpos$). Since $\bl$ is a band of $\bidualbl$, the supremum of this set in $\bidualbl$ is actually in $\bl$. We conclude that $\erbttemp(\charfunc_V)\elt\in\bl$ ($\elt\in\blpos$), and it follows that $\charfunc_V\in L$, as required in the first hypothesis in Theorem~\ref{thm:monotone_class_theorem}.

As to the second hypothesis, suppose that $\phi_n$ is a sequence of functions in $L$, and that $\phi$ is a bounded function on $\ts$ such that $0\leq \phi_n(\tselt)\uparrow \phi(\tselt)$ for all $\tselt\in\ts$. Then certainly $\phi\in\boundedts$, and we are left with showing that $\erbttemp(\phi)(\bl)\subset \bl$.  The proof of this bears some similarity to the above proof that $\erbttemp(\charfunc_V)(\bl)\in L$ for every open subset $V$ of $\ts$. We claim that $\erbttemp(\phi_n)\uparrow \erbttemp(\phi)$ in $\bounded_{\textup{r}}(\bl,\bidualbl)$.  Firstly, it is clear from the positivity of $\erbttemp$ that $\erbttemp(\phi_n)\uparrow$ and that $\erbttemp(\phi)\geq \erbttemp(\phi_n)$ for all $n$. Secondly, if $T\in\bounded_{\textup{r}}(\bl,\bidualbl)$ and $T\geq\erbttemp(\phi_n)$ for all $n$, then, for $\condbothpossp$, and $n\in\N$,
\begin{align}\label{eq:monotone_convergence}
\pairing{T\elt,\dualelt}&\geq\pairing{\erbttemp(\phi_n)\elt,\dualelt}\\
&=\intts\phi_n\,d\mustandardpair.\notag
 \end{align}
Applying the monotone convergence to \eqref{eq:monotone_convergence} yields $\pairing{T\elt,\dualelt}\geq \pairing{\erbttemp(\phi)\elt,\dualelt}$ ($\condbothpossp$); hence $T\geq \erbttemp(\phi)$. This establishes our claim.  A pointwise argument on $\blpos$ as for $\erbttemp(\charfunc_V)$ now implies that $\phi\in L$, as required in the second hypothesis in Theorem~\ref{thm:monotone_class_theorem}.

Theorem~\ref{thm:monotone_class_theorem} now shows that $\boundedts\subset L$, and we have finally established that $\erbttemp(\phi)(\bl)\subset\bl$
for all $\phi\in\boundedts$. Hence we can view $\erbttemp$ as a positive linear map $\erbttemp \colon \boundedts \to\boundedbl$  with codomain $\regularbl$ rather than $\bounded_{\textup{r}}(\bl,\bidualbl)$.

We shall now proceed to show that $\erbttemp$ is a positive representation of $\boundedts$, and that it has a generating regular positive spectral measure on $\sigmats$.

 For the multiplicativity of $\erbttemp$, we argue as follows. Let $\phi,\psi\in\contots$, and $\condbothsp$. Using \eqref{eq:basic_relation_for_extension} twice we have
 \begin{align*}
\intts \phi\psi\,d\mustandardpair&=\pairing{\rct(\phi\psi)\elt,\dualelt}\\
&=\pairing{\rct(\phi)\rct(\psi)\elt,\dualelt}\notag\\
&=\intts\phi\,d\mu_{\rct(\psi)\elt,\dualelt}\notag.
 \end{align*}
 Lemma~\ref{lem:then_for_bounded_borel} then shows that
 \begin{equation}\label{eq:first_step}
\intts \phi\psi\,d\mustandardpair=\intts\phi\,d\mu_{\rct(\psi)\elt,\dualelt}\notag,
 \end{equation}
for all $\phi\in\boundedts,\,\psi\in\contots$ and $\condbothsp$.
This implies, using \eqref{eq:basic_relation_for_extension} in the second and fourth step, that
\begin{align*}
\intts \phi\psi\,d\mustandardpair&=\intts\phi\,d\mu_{\rct(\psi)\elt,\dualelt}\\
&=\pairing{\erbttemp(\phi)\rct(\psi)\elt,\dualelt}\notag\\
&=\pairing{\rct(\psi)\elt,\adjoint{\erbttemp(\phi)}\dualelt}\notag\\
&=\intts\psi\,d\mu_{\elt,\adjoint{\erbttemp(\phi)}\dualelt},\notag
\end{align*}
for all $\phi\in\boundedts,\,\psi\in\contots$ and $\condbothsp$. Lemma~\ref{lem:then_for_bounded_borel} now shows that
\begin{equation*}
\intts \phi\psi\,d\mustandardpair=\intts\psi\,d\mu_{\elt,\adjoint{\erbttemp(\phi)}\dualelt},\notag
\end{equation*}
for all $\phi,\psi\in\boundedts$ and $\condbothsp$. Using \eqref{eq:basic_relation_for_extension} in the second step, we then see that
\begin{align}\label{eq:fourth_step}
 \intts \phi\psi\,d\mustandardpair&=\intts\psi\,d\mu_{\elt,\adjoint{\erbttemp(\phi)}\dualelt}\\
 &=\pairing{\erbttemp(\psi)\elt,\adjoint{\erbttemp(\phi)}\dualelt}\notag\\
 &=\pairing{\erbttemp(\phi)\erbttemp(\psi)\elt,\dualelt},\notag
\end{align}
for all $\phi,\psi\in\boundedts$ and $\condbothsp$. On the other hand, \eqref{eq:basic_relation_for_extension} shows that
\[
 \intts \phi\psi\,d\mustandardpair=\pairing{\erbttemp(\phi\psi)\elt,\dualelt},
\]
for all $\phi,\psi\in\boundedts$ and $\condbothsp$. Comparing this with \eqref{eq:fourth_step}, we conclude that $\erbttemp(\phi\psi)=\erbttemp(\phi)\erbttemp(\psi)$ for all $\phi,\psi\in\boundedts$. Hence $\erbttemp \colon \boundedts \to\regularbl$ is a positive representation of $\boundedts$ on $\bl$.

To show that $\erbttemp$ has a generating positive spectral measure on $\sigmats$, we shall verify condition (3) in Proposition~\ref{prop:those_that_have_a_generating_spectral_measure}. Let $\condbothpossp$, and let $(\mset_n)_{n=1}^{\infty}$ be pairwise disjoint elements of $\sigmaset$. We must show that $\pairing{\erbttemp\lep\charfunc_{\cup_{n=1}^{\infty}
\mset_n}\rip\elt,\dualelt}=\sum_{n=1}^{\infty}\pairing{ \erbttemp(\charfunc_{\mset_n})
\elt,\dualelt}$. Using the monotone convergence theorem in the second step, we see that
\begin{align*}
\pairing{\erbttemp\lep\charfunc_{\cup_{n=1}^{\infty}
\mset_n}\rip\elt,\dualelt}&=\intts \charfunc_{\cup_{n=1}^{\infty}\mset_n}\,d\mustandardpair\\
&=\sum_{n=1}^\infty \intts \charfunc_{\mset_n}\,d\mustandardpair\\
&=\sum_{n=1}^{\infty}\pairing{ \erbttemp(\charfunc_{\mset_n})
\elt,\dualelt}.
\end{align*}
Hence we conclude from Proposition~\ref{prop:those_that_have_a_generating_spectral_measure} that $\erbttemp$ has a generating positive spectral $\proj$ measure on $\sigmats$ that is given by $\proj(\mset)=\erbttemp(\charfunc_\mset)$.  In order to conclude that $\proj$ is regular, we consider its associated signed measures, denoted temporarily by $\mustandardpair^\proj$ ($\condbothsp$). For $\mset\in\sigmats$, we have, using \eqref{eq:basic_relation_for_extension} in the third step,
\begin{align*}
\mustandardpair^\proj(\mset)&=\pairing{\proj(\mset)\elt,\dualelt}\\&=\pairing{\erbttemp(\charfunc_\mset)\elt,\dualelt}\\
&=\intts \charfunc_\mset\,d\mustandardpair\notag\\
&=\mustandardpair(\mset).\notag
\end{align*}
Hence $\mustandardpair^\proj=\mustandardpair$ ($\condbothsp$), which is known to be regular.
\end{proof}

\begin{remark}\label{rem:alternative_approaches}
The above proof of Theorem~\ref{thm:generating_measure_KB} makes it clear what is the essential feature of a KB-space $E$ in this context: it is a band of $\bidualbl$. Since we are concerned with positive representations, such an order-theoretical property and its ensuing role in the proof seem natural. It should be noted, however, that it is also possible to obtain the existence part in Theorem~\ref{thm:generating_measure_KB} using various Banach space characterisation of KB-spaces, along the following lines.

We first note that, by part (2) of Proposition~\ref{prop:automatic_continuity_ts}, the Dedekind completeness of the KB-space $\bl$ implies that
$\rct \colon \contots\!\to\!\boundedblwithnorm$ is bounded. Even if $\ts$ is compact, so that $\contots$ has a unit, we can still consider the augmented algebra $\contots_1=\R\times\contots$ of $\contots$ (which is the usual unitisation of $\contots$ if $\ts$ is not compact), with
norm $\norm{(\lambda,\phi)}=\abs{\lambda}+\norm{\phi}$ ($\lambda\in\R$, $\phi\in\contots$). Then the representation $\rct_\infty \colon
\contots_1 \to\boundedblwithnorm$, defined by $\rct_\infty((\lambda,\phi))=\lambda\,\idmap_\bl + \rct(\phi)$ ($\lambda\in\R,\phi\in\contots$),
is a unital representation of $\contots_1$ on $\bl$ that is also bounded.

Next, also if $\ts$ is already compact, we let $\ts_\infty=\ts\cup\leb\infty\rib$ be the one point compactification of $\ts$. The algebras $\conto{\ts_\infty}$ and $\contots_1$
are canonically isomorphic as abstract algebras, and although the pertinent isomorphism is not necessarily isometric, it is still a
linear homeomorphism. Hence $\rct_\infty$ can be viewed as a bounded unital representation of $\conto{\ts_\infty}$ on $\bl$ that extends
$\rct$.

There are now various ways to proceed:
\begin{enumerate}
\item A Banach lattice $E$ is a KB-space if and only if it does not contain a subspace linearly homeomorphic to $c_0$ \cite[Theorem~7.1]{wnu}. For Banach spaces that do not contain such a subspace, \cite[Theorem~III.4]{ric} provides a generating regular spectral measure for bounded unital representations of $\contots$-spaces (for compact $\ts$) on them. In particular, this applies to the bounded unital representation $\rct_\infty$ of $\conto{\ts_\infty}$ on $\bl$.
\item A Banach lattice $E$ is a KB-space if and only if it is weakly sequentially complete \cite[Theorem~7.1]{wnu}. For weakly sequentially complete Banach spaces, \cite[Theorem~XVII.2.5]{dunsch3} provides a generating regular spectral measure for bounded unital representations of $\contots$-spaces (for compact $\ts$) on them. Again, this applies, in particular, to the bounded unital representation $\rct_\infty$ of $\conto{\ts_\infty}$ on $\bl$.
\item Since $E$ does not contain a subspace linearly homeomorphic to $c_0$, every continuous linear map from $C_0(X)$ (for compact $X$) into $E$ is weakly compact \cite[Theorem~I.14]{ric}. In particular, for all $\elt\in\bl$, the map $f\mapsto\rct_\infty(f)x$ ($f\in\conto{\ts_\infty}$) is weakly compact. Hence \cite[Theorem~3]{orh} applies, and provides a generating spectral measure for $\rct_\infty$.
\end{enumerate}
Once we have concluded that there exists a regular spectral measure $\proj_\infty$ on the Borel $\sigma$-algebra $\sigmats_\infty$ of $\ts_\infty$ such that
\begin{equation}\label{eq:compactification_weak_def}
\pairing{\rct_\infty(\phi)\elt,\dualelt}=\int_{\ts_\infty} \phi\,d\mustandardpair^{P_\infty}\quad(\phi\in \conto{\ts_\infty},\,\condbothsp),
\end{equation}
where $\mustandardpair^{P_\infty}(\mset)=\pairing{\proj_\infty(\mset)\elt,\dualelt}$ ($\mset\in\sigmats_\infty,\,\condbothsp)$, we define
$\proj \colon \sigmats\to\regularbl$ by $\proj(\mset)=\proj_\infty(\mset)$ ($\mset\in\sigmats\subset\sigmats_\infty$).
It is routine to check that $\proj$ is a regular spectral measure on $\sigmats$, and we let $\mustandardpair$ ($\condbothsp$) denote the usual associated
regular finite signed Borel measures on $\sigmats$. If $\phi\in\contots$, then $\phi(\infty)=0$, so \eqref{eq:compactification_weak_def}
implies that
\begin{equation}\label{eq:compactification_back}
\pairing{\rct(\phi)\elt,\dualelt}=\intts \phi\,d\mustandardpair\quad(\phi\in\contots,\,\condbothsp).
\end{equation}
Since $\rct$ is positive, the order statement in the Riesz representation theorem implies that all measures $\mustandardpair$ ($\condbothpossp$)
are positive, which shows that $\proj$ takes its values in the positive projections on $\bl$. Comparison of \eqref{eq:compactification_back} with \eqref{eq:weak_def}
yields that the positive representation $\rbt_\proj \colon \boundedts\to\regularbl$ that is generated by $\proj$ restricts to $\rct$
on $\contots$, as required. This concludes the alternative Banach space proof of Theorem~\ref{thm:generating_measure_KB}.

The reader may verify that each of the three aforementioned existence results for generating spectral measures uses a substantial amount of theory of Boolean algebras of projections, as well as general Banach space theory. Since, in addition, these existence results can only be applied once the pertinent Banach space property of a KB-space has been established, we feel that our proof of the existence part in Theorem~\ref{thm:generating_measure_KB}, that  exploits only order-theoretical properties, is not only more natural in this context, but also considerably simpler than the above alternatives.
\end{remark}

\begin{remark}
If $\ts$ is compact, additional existence results for regular positive spectral measures generating so-called $R$-bounded unital positive
representations of $\contots$ can be obtained using \cite[Proposition~2.17]{pagric}.
\end{remark}

After these remarks regarding Theorem~\ref{thm:generating_measure_KB}, we continue with a consequence thereof that is clear.

\begin{corollary}\label{cor:bijection}
Let $\ts$ be a locally compact Hausdorff space with Borel $\sigma$-algebra
$\sigmats$, and $\bl$ a KB-space.
Then the map $\proj\to\rbt_\proj\rest{\contots}$, sending an $\regularbl$-valued positive spectral measure on $\sigmats$ to the restriction of $\rbt_\proj \colon \boundedts \to
\regularbl$ to $\contots\subset\boundedts$, restricts to a bijection between the $\regularbl$-valued regular positive spectral measures on $\sigmats$ and the positive representations of $\contots$ on $\bl$.

If $\ts$ is compact, then $\rbt_\proj\rest{\contots}$ is
unital if and only if $\proj$ is unital.
\end{corollary}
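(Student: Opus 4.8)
The plan is to read the statement off directly from Theorem~\ref{thm:generating_measure_KB}, decomposing bijectivity into well-definedness, surjectivity, and injectivity, each of which corresponds to a feature already established.

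First I would check well-definedness together with the stated codomain. Given any $\regularbl$-valued positive spectral measure $\proj$ on $\sigmats$, Theorem~\ref{thm:generated_positive_representation} produces the positive representation $\rbt_\proj \colon \boundedts \to \regularbl$. Since $\contots$ is a subalgebra of $\boundedts$, the restriction $\rbt_\proj\rest{\contots}$ is again an algebra homomorphism carrying positive functions to positive operators, hence a positive representation of $\contots$ on $\bl$; so the map indeed sends (regular) positive spectral measures to positive representations of $\contots$.

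Next, surjectivity and injectivity of the map restricted to \emph{regular} positive spectral measures are exactly the existence and uniqueness clauses of Theorem~\ref{thm:generating_measure_KB}. For surjectivity: any positive representation $\rct$ of $\contots$ on the KB-space $\bl$ possesses, by that theorem, a regular positive spectral measure $\proj$ generating it, which by definition means $\rct = \rbt_\proj\rest{\contots}$; thus $\rct$ lies in the image. For injectivity: if two regular positive spectral measures $\proj_1,\proj_2$ satisfy $\rbt_{\proj_1}\rest{\contots} = \rbt_{\proj_2}\rest{\contots}$, then both generate this common representation, and the uniqueness assertion of Theorem~\ref{thm:generating_measure_KB} yields $\proj_1 = \proj_2$.

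Finally, for the unital statement with $\ts$ compact, I would use that $\onefunctionts \in \contots$ is then the unit of $\contots$ and that, by the construction of $\rbt_\proj$, one has $\rbt_\proj\rest{\contots}(\onefunctionts) = \rbt_\proj(\charfunc_\ts) = \proj(\ts) = \maxprojts$. Hence $\rbt_\proj\rest{\contots}$ is unital precisely when $\maxprojts = \idmap_\bl$, which is the definition of $\proj$ being unital; alternatively one may simply invoke part~(2) of Theorem~\ref{thm:generated_positive_representation}. I expect no genuine obstacle: every step unwinds a definition or cites Theorem~\ref{thm:generating_measure_KB}, and the only point requiring a moment's attention is the observation that the generating relation $\rct = \rbt_\proj\rest{\contots}$ is precisely the map under study, so that existence and uniqueness of generating measures translate verbatim into surjectivity and injectivity.
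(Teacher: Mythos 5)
Your proposal is correct and is exactly the argument the paper has in mind: the paper states the corollary as a ``clear'' consequence of Theorem~\ref{thm:generating_measure_KB}, and your unwinding\textemdash well-definedness from the construction of $\rbt_\proj$, surjectivity and injectivity as verbatim translations of the existence and uniqueness clauses of that theorem (the generating relation $\rct=\rbt_\proj\rest{\contots}$ being precisely the map in question), and the unital statement via $\rbt_\proj(\onefunctionts)=\proj(\ts)$, i.e.\ part~(2) of Theorem~\ref{thm:generated_positive_representation}\textemdash is the intended proof.
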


We shall now concentrate on the implications of the existence of a generating (regular) positive spectral measure for $\rct$. To start with, we have the following result. Note that it also covers the `missing' case in Proposition~\ref{prop:automatic_continuity_ts}, but only under the hypothesis of the existence of a generating positive spectral measure.

\begin{proposition}\label{prop:norm_for_generated_representation}
Let $\ts$ be a locally compact Hausdorff space with Borel $\sigma$-algebra
$\sigmats$, $\bl$ a Banach lattice, and $\rct \colon \contots \to\regularbl$ a positive representation. Suppose $\rct$ has a generating positive spectral measure $\proj$ on $\sigmats$, and let $\rbt_\proj \colon \boundedts\to\regularbl$ denote the generated bounded positive representation of $\boundedts$ on $\bl$ extending $\rct$. Then:
\begin{enumerate}
 \item $\rct \colon \contots\to\regularblwithnorm$ is bounded, and $\norm{\rct}\leq\norm{\rct_\proj}=\norm{\maxprojts}$.
 \item If $\proj$ is regular, then $\norm{\rct}=\norm{\rct_\proj}=\maxprojset$.
 \item If $\proj$ is regular and $\bl$ is Dedekind complete, then the maps
$\rct \colon \contots \to\regularblwithnorm$, $\rct \colon \contots \to\regularblwithregnorm$,
 $\erbt \colon \boundedts \to\regularblwithnorm$, and $\erbt \colon \boundedts \to\regularblwithregnorm$
are all bounded, and $\norm{\rct}=\regnorm{\rct}=\norm{\erbt}=\regnorm{\erbt}=\norm{\maxprojts}$.
 \end{enumerate}
\end{proposition}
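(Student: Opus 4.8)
The plan is to derive all three parts from the already-established norm of the generated representation $\rbt_\proj$ on $\boundedts$, using the regularity of $\proj$ only where the reverse inequality is needed. Throughout I write $\maxprojts$ for $\proj(\ts)$ (the $\maxprojset$ appearing in part~(2) denotes the same operator, and should carry a norm). For part~(1) I would simply invoke Theorem~\ref{thm:generated_positive_representation}(1), applied with $\set=\ts$ and $\sigmaset=\sigmats$, which gives that $\rbt_\proj\colon\boundedts\to\regularblwithnorm$ is bounded with $\norm{\rbt_\proj}=\norm{\maxprojts}$. Since $\rct$ is by hypothesis the restriction of $\rbt_\proj$ to the subspace $\contots\subset\boundedts$, it is bounded and $\norm{\rct}\leq\norm{\rbt_\proj}=\norm{\maxprojts}$.

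For part~(2) the real content is the reverse inequality $\norm{\maxprojts}\leq\norm{\rct}$. I would start from the fact that $\proj(\ts)$ is a positive operator: by \cite[Proposition~1.3.5]{mey} combined with the standard Banach lattice fact that the norm of a positive vector is the supremum of its pairings against positive functionals in the unit ball of $\dualbl$, one has $\norm{\maxprojts}=\sup\{\,\pairing{\proj(\ts)\elt,\dualelt}:\elt\in\blpos,\ \dualelt\in\dualblpos,\ \norm{\elt}\leq 1,\ \norm{\dualelt}\leq 1\,\}$. Fixing such $\elt,\dualelt$, the measure $\mustandardpair$ is a positive regular finite Borel measure, so applying \eqref{eq:open_versus_functions} with the open set $V=\ts$ (the condition $\supp\phi\subset\ts$ being automatic) yields $\pairing{\proj(\ts)\elt,\dualelt}=\mustandardpair(\ts)=\sup\{\,\intts\phi\,d\mustandardpair:\phi\in\contcts,\ \zerofunctionts\leq\phi\leq\onefunctionts\,\}$. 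By \eqref{eq:weak_def} each such integral equals $\pairing{\rct(\phi)\elt,\dualelt}\leq\norm{\rct}\norm{\phi}\norm{\elt}\norm{\dualelt}\leq\norm{\rct}$, using $\norm{\phi}\leq 1$. Hence $\pairing{\proj(\ts)\elt,\dualelt}\leq\norm{\rct}$, and taking the supremum over $\elt,\dualelt$ gives $\norm{\maxprojts}\leq\norm{\rct}$. Together with part~(1) and $\norm{\rbt_\proj}=\norm{\maxprojts}$, this produces the chain of equalities.

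For part~(3), with $\bl$ Dedekind complete, I would add Theorem~\ref{thm:generated_positive_representation}(7)(a), which gives that $\rbt_\proj\colon\boundedts\to\regularblwithregnorm$ is bounded with $\regnorm{\rbt_\proj}=\norm{\maxprojts}$. Restricting to $\contots$ yields $\regnorm{\rct}\leq\regnorm{\rbt_\proj}=\norm{\maxprojts}$, and since the inclusion $\regularblwithregnorm\hookrightarrow\boundedblwithnorm$ is contractive we have $\norm{T}\leq\regnorm{T}$ for every regular $T$, whence $\norm{\rct}\leq\regnorm{\rct}$ and $\norm{\rbt_\proj}\leq\regnorm{\rbt_\proj}$. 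The chain $\norm{\maxprojts}=\norm{\rct}\leq\regnorm{\rct}\leq\regnorm{\rbt_\proj}=\norm{\maxprojts}$ (using part~(2) for the first equality) then collapses to $\norm{\rct}=\regnorm{\rct}=\norm{\maxprojts}$; combined with parts~(1) and (7)(a) of Theorem~\ref{thm:generated_positive_representation} for $\rbt_\proj$, this shows all four quantities equal $\norm{\maxprojts}$.

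I expect the main obstacle to be the reverse inequality in part~(2): it is exactly here that regularity is indispensable. The argument hinges on approximating the full mass $\mustandardpair(\ts)$ from within by integrals $\intts\phi\,d\mustandardpair$ of compactly supported functions $\phi$ with $\zerofunctionts\leq\phi\leq\onefunctionts$ via \eqref{eq:open_versus_functions}, which is precisely what lets us control $\pairing{\proj(\ts)\elt,\dualelt}$ by $\norm{\rct}$. Without regularity the measures $\mustandardpair$ need not admit such inner approximation by functions of $\contcts$, and indeed part~(1) then provides only an inequality.
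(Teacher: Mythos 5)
Your proof is correct, and parts (1) and (3) coincide with the paper's own argument: the same appeals to parts (1) and (7)(a) of Theorem~\ref{thm:generated_positive_representation}, the contractivity of $\regularblwithregnorm\hookrightarrow\boundedblwithnorm$, and the same collapsing chain of inequalities. The genuine difference is in part (2). The paper proves the reverse inequality in the form $\norm{\erbt}\leq\norm{\rct}$: it writes $\norm{\erbt}$ as a supremum of $\lev\intts\phi\,d\mustandardpair\riv$ over $\phi\in\boundedts$ and \emph{arbitrary} $\elt,\dualelt$ in the unit balls, bounds this by $\sup\norm{\mustandardpair}$ (total variation), and then uses the isometry statement of the Riesz representation theorem (part (1) of Theorem~\ref{thm:overview_theorem}) to convert $\norm{\mustandardpair}$ back into a supremum over $\phi\in\contots$, yielding $\norm{\erbt}\leq\norm{\rct}$. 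You instead prove $\norm{\maxprojts}\leq\norm{\rct}$ directly: positivity of $\maxprojts$ lets you restrict to $\elt\in\blpos$, $\dualelt\in\dualblpos$, for which $\mustandardpair$ is a positive regular Borel measure, and then \eqref{eq:open_versus_functions} with $V=\ts$ gives the inner approximation of $\mustandardpair(\ts)=\pairing{\maxprojts\elt,\dualelt}$ by $\pairing{\rct(\phi)\elt,\dualelt}$ with $\phi\in\contcts$, $\zerofunctionts\leq\phi\leq\onefunctionts$. Both arguments consume regularity at exactly one point (your diagnosis of where it is indispensable matches the paper's), and the two mechanisms are two faces of the same Riesz-theoretic fact, since $\norm{\mu}=\mu(\ts)$ for positive $\mu$. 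What yours buys is a purely order-theoretic flavour\textemdash it is essentially the same device the paper itself deploys later, in the proof of part (3) of Theorem~\ref{thm:construct_everything_from_rcts}\textemdash while the paper's version compares the operator norms of $\erbt$ and $\rct$ wholesale, handling the signed measures $\mustandardpair$ without splitting into positive parts. You are also right that the paper's statement of part (2) has a typo: $\maxprojset$ there should read $\norm{\maxprojts}$.
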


\begin{proof}
We know from part (1) of Theorem~\ref{thm:generated_positive_representation} that $\norm{\rbt_\proj}=\norm{\maxprojts}$. Certainly the restriction $\rct$ of $\rbt_\proj$ to $\contots$ is also bounded, and $\norm{\rct}\leq\norm{\rbt_\proj}$; this establishes part (1). For the converse inequality that is needed for part (2) if $\proj$ is regular, we use \eqref{eq:weak_def} and the isometry statement in the Riesz representation theorem to see that
\begin{align*}
\Vert &\pi_P\Vert =\\
&=\sup\leb \lev\pairing{\rbt(\phi)\elt,\dualelt}\riv : \phi\in\boundedts,\,\norm{\phi}\leq 1,\,\condelt,\,\norm{\elt}\leq 1,\,\conddualelt,\,\norm{\dualelt}\leq 1\rib\\
&= \sup\leb \lev\intts\phi\,d\mustandardpair\riv: \phi\in\boundedts,\,\norm{\phi}\leq 1,\,\condelt,\,\norm{\elt}\leq 1,\,\conddualelt,\,\norm{\dualelt}\leq 1\rib\\
&\leq \sup\leb \norm{\mustandardpair}: \condelt,\,\norm{\elt}\leq 1,\,\conddualelt,\,\norm{\dualelt}\leq 1\rib\\
&=\sup\leb \lev\intts\phi\,d\mustandardpair\riv: \phi\in\contots
,\norm{\phi}\leq 1,\,\condelt,\norm{\elt}\leq 1,\,\conddualelt,\,\norm{\dualelt}\leq 1\!\rib\\
&=\sup\leb \lev\pairing{\rbt(\phi)\elt,\dualelt}\riv : \phi\in\contots,\,\norm{\phi}\leq 1,\,\condelt,\norm{\elt}\leq 1,\,\conddualelt,\,\norm{\dualelt}\leq 1\rib\\
&=\norm{\rct}.
\end{align*}

As to (3), for Dedekind complete $\bl$ the positive maps $\rct\colon \contots\to\regularblwithregnorm$ and $\rbt_\proj \colon \boundedts \to\regularblwithregnorm$ between Banach lattices are bounded, and then the contractivity of the inclusion map $\regularblwithregnorm\hookrightarrow\boundedblwithnorm$ implies that the other two maps are bounded as well. Furthermore, parts (7)(a) and (1) of Theorem~\ref{thm:generated_positive_representation} show that
$\regnorm{\erbt}=\norm{\erbt}=\norm{\maxprojts}$.
We also know from part (2) of Proposition~\ref{prop:automatic_continuity_ts} that $\norm{\pi}\leq\regnorm{\pi}$. In addition, we have $\regnorm{\pi}\leq\regnorm{\erbt}$, since $\erbt$ extends $\rct$. If $\proj$ is regular, then we have already established in part (2) that $\norm{\rbt_\proj}=\norm{\rct}$. Combining all this, we see that, for regular $\proj$ and Dedekind complete $\bl$,
\[
\regnorm{\erbt}=\norm{\erbt}=\norm{\rct}\leq\regnorm{\rct}\leq\regnorm{\erbt},
\]
and the proof is complete.
\end{proof}

We collect a few further consequences (some of them of course familiar from the non-ordered context) of the existence of a generating positive spectral measure for $\rct \colon \contots\to\regularbl$ in our next result.

\begin{proposition}\label{prop:usual_consequences}
Let $\ts$ be a locally compact Hausdorff space with Borel $\sigma$-algebra
$\sigmats$, $\bl$ a Banach lattice, and $\rct \colon \contots \to\regularbl$ a positive representation. Suppose $\rct$ has a generating positive spectral measure $\proj$ on $\sigmats$, and let $\rbt_\proj \colon \boundedts\to\regularbl$ denote the generated bounded positive representation of $\boundedts$ on $\bl$ extending $\rct$.
Then:
\begin{enumerate}
\item If $\phi\in\contots$, then there is a sequence of linear combinations of elements of $\proj(\sigmats)$ that converges to $\rct(\phi)$ in $\regularblwithnorm$. If $\bl$ is Dedekind complete, then there exists such a sequence converging to $\rct(\phi)$ in $\regularblwithregnorm$. If $\phi\in\positive{\contots}$, then the coefficients occurring in these linear combinations can be taken non-negative.
\item If $\phi\in\contots$, and $\lep\phi\rip_{n=1}^\infty\subset\contots$
is a norm bounded sequence such that $\lim_{n\to\infty}\phi_n(\tselt)=\phi(\tselt)$
for all $\tselt\in\ts$, then $\rct(\phi)=\textup{WOT-}\lim_n\rct(\phi_n)$.
\item If $\phi\in\contots$ and $\lep\phi\rip_{n=1}^\infty\subset\contots$
is a norm bounded sequence such that $\phi_n(\tselt)\uparrow\phi(\tselt)$ for all $\tselt\in\ts$, then $\rct(\phi_n)\uparrow\rct(\phi)$ in $\regularbl$.
\end{enumerate}
If $\proj$ is regular, then:
\begin{enumerate}
\item[(4)] The commutants $\commutant{\proj(\sigmats)}$, $\commutant{\erbt(\simplets)}$,
$\commutant{\erbt(\boundedts)}$, and $\commutant{\rct(\contots)}$ in
$\boundedbl$ are equal. Consequently, $\proj(\sigmats)\subset\bicommutant{\rct(\contots)}$.
\end{enumerate}
\end{proposition}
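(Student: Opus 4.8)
The plan is to treat parts~(1)--(3) as direct consequences of the results of Section~\ref{sec:from_positive_spectral_measure_to_positive_representation} applied to the extension $\rbt_\proj$, using throughout that $\rct=\rbt_\proj\rest{\contots}$ and that the supremum norm and the pointwise order on $\contots$ are the restrictions of those on $\boundedts$, so that a norm bounded (resp.\ monotone, resp.\ pointwise convergent) sequence in $\contots$ is one in $\boundedts$ as well. The genuinely new content is part~(4), which is the only place where the regularity of $\proj$ is needed.

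For part~(1) I would fix $\phi\in\contots\subset\boundedts$ and approximate it uniformly by simple functions $\psi_n\in\simplets$. Since $\rbt_\proj\colon\boundedts\to\regularblwithnorm$ is bounded by part~(1) of Theorem~\ref{thm:generated_positive_representation}, the linear combinations $\rbt_\proj(\psi_n)=\rct(\psi_n)$ of elements of $\proj(\sigmats)$ converge to $\rbt_\proj(\phi)=\rct(\phi)$ in the operator norm. If $\bl$ is Dedekind complete, then $\rbt_\proj\colon\boundedts\to\regularblwithregnorm$ is likewise bounded by part~(7)(a) of that theorem, so the same sequence converges in the regular norm. When $\phi\geq 0$ one takes the $\psi_n$ to be the standard nonnegative dyadic simple approximants of $\phi$; these converge to $\phi$ uniformly because $\phi$ is bounded, and their coefficients are nonnegative. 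Part~(2) is immediate from part~(4) of Theorem~\ref{thm:generated_positive_representation}: a norm bounded sequence in $\contots$ converging pointwise to $\phi\in\contots$ is a norm bounded sequence in $\boundedts$ converging pointwise, so $\rct(\phi)=\rbt_\proj(\phi)=\textup{WOT-}\lim_n\rbt_\proj(\phi_n)=\textup{WOT-}\lim_n\rct(\phi_n)$. Part~(3) is the same reduction, now to Proposition~\ref{prop:order_continuity_is_necessary}: the hypothesis $\phi_n(\tselt)\uparrow\phi(\tselt)$ is exactly $\phi_n\uparrow\phi$ in $\boundedts$, whence $\rbt_\proj(\phi_n)\uparrow\rbt_\proj(\phi)$ in $\regularbl$ and restricting to $\contots$ gives the claim.

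For part~(4), part~(6) of Theorem~\ref{thm:generated_positive_representation} already yields $\commutant{\proj(\sigmats)}=\commutant{\erbt(\simplets)}=\commutant{\erbt(\boundedts)}$, and since $\rct(\contots)=\erbt(\contots)\subset\erbt(\boundedts)$ the inclusion $\commutant{\erbt(\boundedts)}\subset\commutant{\rct(\contots)}$ is free. The only thing left is the reverse inclusion $\commutant{\rct(\contots)}\subset\commutant{\proj(\sigmats)}$, and this is where regularity enters. Given $S\in\commutant{\rct(\contots)}$ with Banach-space adjoint $\adjoint{S}$, I would fix $\elt\in\bl$ and $\dualelt\in\dualbl$ and test the commutation relation against $\phi\in\contots$: using \eqref{eq:weak_def} in the outer two steps and $S\rct(\phi)=\rct(\phi)S$ in the middle,
\[
\intts\phi\,d\mu_{\elt,\adjoint{S}\dualelt}=\pairing{\rct(\phi)\elt,\adjoint{S}\dualelt}=\pairing{S\rct(\phi)\elt,\dualelt}=\pairing{\rct(\phi)S\elt,\dualelt}=\intts\phi\,d\mu_{S\elt,\dualelt}
\]
for all $\phi\in\contots$. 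Both $\mu_{\elt,\adjoint{S}\dualelt}$ and $\mu_{S\elt,\dualelt}$ are regular finite signed Borel measures because $\proj$ is regular, so the uniqueness statement in the Riesz representation theorem (part~(1) of Theorem~\ref{thm:overview_theorem}) forces $\mu_{\elt,\adjoint{S}\dualelt}=\mu_{S\elt,\dualelt}$ on all of $\sigmats$. Evaluating on an arbitrary $\mset\in\sigmats$ gives $\pairing{S\proj(\mset)\elt,\dualelt}=\pairing{\proj(\mset)\elt,\adjoint{S}\dualelt}=\pairing{\proj(\mset)S\elt,\dualelt}$; as $\elt$ and $\dualelt$ are arbitrary, $S\proj(\mset)=\proj(\mset)S$, i.e.\ $S\in\commutant{\proj(\sigmats)}$. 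The four commutants are therefore equal, and the last inclusion follows from $\proj(\sigmats)\subset\bicommutant{\proj(\sigmats)}=\bicommutant{\rct(\contots)}$.

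The main obstacle is pinpointing precisely where regularity is indispensable: without it one cannot pass from equality of the functionals $\phi\mapsto\intts\phi\,d\mu_{\elt,\adjoint{S}\dualelt}$ and $\phi\mapsto\intts\phi\,d\mu_{S\elt,\dualelt}$ on $\contots$ to equality of the underlying Borel measures on $\sigmats$, and hence cannot upgrade commutation with $\rct(\contots)$ to commutation with the individual projections $\proj(\mset)$. I expect the remaining points to require only bookkeeping care: verifying that a norm bounded or monotone sequence in $\contots$ retains that property inside $\boundedts$, and keeping the two slots of $\mu_{\standardpair}$ straight when moving $S$ across the pairing via its adjoint.
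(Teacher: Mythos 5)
Your proof is correct and takes essentially the same route as the paper: parts (1)--(3) are obtained exactly as there, by specialising parts (1), (7)(a), and (4) of Theorem~\ref{thm:generated_positive_representation} and Proposition~\ref{prop:order_continuity_is_necessary} to the extension $\rbt_\proj$, and part (4) combines part (6) of that theorem with the same Riesz-uniqueness argument the paper uses (the paper phrases it as a chain of equivalences for $T\in\commutant{\rct(\contots)}$, you as one free inclusion plus its reverse, which is the same computation). The only blemish is notational: for a simple function $\psi_n\in\simplets$ the expression $\rct(\psi_n)$ is undefined, since $\rct$ acts only on $\contots$; you mean simply that $\rbt_\proj(\psi_n)$ is, by construction, a linear combination of elements of $\proj(\sigmats)$.
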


\begin{proof}
For part (1), we take a sequence of simple functions converging uniformly to $\phi$ in $\boundedts$ and apply parts (1) and (7)(a) of Theorem~\ref{thm:generated_positive_representation}.

Part (2) is a specialisation of part (4) of Theorem~\ref{thm:generated_positive_representation}.

Part (3) is a specialisation of Proposition~\ref{prop:order_continuity_is_necessary}.

As to part (4), from part (6) of Theorem~\ref{thm:generated_positive_representation} we already know that the commutants $\commutant{\proj(\sigmats)}$, $\commutant{\erbt(\simplets)}$, and $\commutant{\erbt(\boundedts)}$ in $\boundedbl$ are equal. We shall show that $\commutant{\rct(\contots)}=\commutant{\proj(\sigmats)}$. Let $T\in\boundedbl$. Then $T\in\commutant{\rct(\contots)}$ if and only if $\pairing{T\rct(\phi)\elt,\dualelt}=\pairing{\rct(\phi)T\elt,\dualelt}$ for all $\phi\in\contots$ and $\condbothsp$. Now note that
\begin{align*}
 \pairing{T\rct(\phi)\elt,\dualelt}&=\pairing{\rct(\phi)\elt,\adjoint{T}\dualelt}\\
 &=\intts \phi\,d\mu_{\elt,\adjoint{T}\dualelt},
\end{align*}
and that
\begin{align*}
\pairing{\rct(\phi)T\elt,\dualelt}&=\intts\phi\,d\mu_{T\elt,\dualelt}.
\end{align*}
Thus $T\in\commutant{\rct(\contots)}$ if and only if
\begin{equation*}
 \intts \phi\,d\mu_{\elt,\adjoint{T}\dualelt}=\intts\phi\,d\mu_{T\elt,\dualelt},
\end{equation*}
for all $\phi\in\contots$ and $\condbothsp$. By the uniqueness statement in the Riesz representation theorem, this is the case if and only if $\mu_{\elt,\adjoint{T}\dualelt}=\mu_{T\elt,\dualelt}$ for all $\condbothsp$. That is, if and only if $\pairing{\proj(\mset)\elt,\adjoint{T}\dualelt}=\pairing{\proj(\mset)T\elt,\dualelt}$ for all $\mset\in\sigmats,\condboth$.  This, in turn, is equivalent to $T\in\commutant{\proj(\sigmats)}$.

The folklore final statement is immediate from $\proj(\sigmaset)\subset\bicommutant{\proj(\sigmaset)}=\bicommutant{\rct(\contots)}$.
\end{proof}

We conclude by showing how the generating regular positive spectral measure $\proj$ of $\rct$, if it exists, can be determined directly from $\rct$ in terms of the ordering on $\regularbl$. By the first part of Proposition~\ref{prop:regularity_of_the_spectral_measure}, it is sufficient to know $\proj(V)$ for all open subsets $V$ of $\ts$, and part (1) of the next result shows how $\proj(V)$ can be found from $\rct(\contots)$. Likewise, the second part of Proposition~\ref{prop:regularity_of_the_spectral_measure} shows that it is sufficient to know $\proj(K)$ for all compact subsets $K$ of $\ts$, and part (2) of the next result shows how to retrieve these from $\rct(\contots)$.

As in similar previous results, we do not assume that $\bl$ is Dedekind complete, hence the existence of the various suprema and infima in $\regularbl$ is not automatic.

\begin{theorem}\label{thm:construct_everything_from_rcts}
Let $\ts$ be a locally compact Hausdorff space with Borel $\sigma$-algebra
$\sigmats$, $\bl$ a Banach lattice, and $\rct \colon \contots \to\regularbl$ a positive representation. Suppose $\rct$ has a generating regular positive spectral measure $\proj$ on $\sigmats$.
\begin{enumerate}
\item Let $V$ be an open subset of $\ts$. Then
\begin{equation*}
\proj(V)=\sup \leb\rct(\phi) : \phi\in\contcts,\,\supp \phi\subset V,\, \zerofunctionts\leq \phi\leq\onefunctionts\rib.
\end{equation*}
\item Let $K$ be a compact subset of $\ts$. Then
\begin{equation*}
\proj(K)=\inf \leb\rct(\phi) : \phi\in\contcts,\,\phi\geq\charfunc_K\rib.
\end{equation*}
\item In addition to the expression for $\maxprojset$ as obtained from part \ulp 1\urp, we also have
\begin{equation*}
\maxprojts=\sup \leb\rct(\phi) : \phi\in\contots, \zerofunctionts\leq \phi\leq\onefunctionts\rib.
\end{equation*}
\item Let $V$ be an open subset of $\ts$. Then $V$ is $\sigma$-compact if and only if there exists a sequence $\lep\phi_n\rip_{n=1}^\infty$ in $\contcts$ such that $\supp\phi_n\subset V$  \ulp$n\geq 1$\urps and $\sup_{n}\phi_n=\charfunc_V$ in $\boundedts$.

In that case, there exists a sequence $\lep\phi_n\rip_{n=1}^\infty$ \!\!\!\ in $\contcts$ such that $\supp\phi_n\! \subset V$ and $\zerofunctionts\leq\phi_n\leq\onefunctionts$ \ulp$n\geq 1$\urp, and $\phi_n\uparrow\charfunc_V$ in $\boundedts$.

For any norm bounded sequence $(\phi)_{n=1}^\infty$ in $\contcts$ such that $\phi_n\uparrow\charfunc_V$ in $\boundedts$, we have  $\rbs(\phi_n)\uparrow\proj(V)$ in $\regularbl$ and $\proj(V)=\textup{WOT-}\lim_n\phi_n$.

\item $\ts$ is $\sigma$-compact if and only if there exists a sequence $\lep\phi_n\rip_{n=1}^\infty$ in $\contots$ such that $\zerofunctionts\leq\phi_n\leq\onefunctionts$ \ulp$n\geq 1\urp$ and $\sup_{n}\phi_n=\onefunctionts$ in $\boundedts$.

In that case, there exists a sequence $\lep\phi_n\rip_{n=1}^\infty$ in $\contcts$ such that  $\zerofunctionts\leq\phi_n\leq\onefunctionts$ \ulp$n\geq 1$\urp, and $\phi_n\uparrow\onefunctionts$ in $\boundedts$.

For any norm bounded sequence $\lep\phi_n\rip_{n=1}^\infty$ in $\contots$ such that $\phi_n\uparrow\onefunctionts$ in $\boundedts$, we have  $\rbs(\phi_n)\uparrow\maxprojts$ in $\regularbl$ and $\maxprojts=\textup{WOT-}\lim_n\phi_n$.
\end{enumerate}
\end{theorem}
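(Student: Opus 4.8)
The plan is to push every order-theoretic identity down to the level of the scalar measures $\mustandardpair$ by pairing against arbitrary $\condbothpos$. Throughout I use that $\rct$ is the restriction to $\contots$ of the generated representation $\rbt_\proj\colon\boundedts\to\regularbl$, so that $\rct(\phi)=\rbt_\proj(\phi)$ for $\phi\in\contots$ and $\proj(\mset)=\rbt_\proj(\charfunc_\mset)$; that, by \eqref{eq:weak_def}, $\pairing{\rct(\phi)\elt,\dualelt}=\intts\phi\,d\mustandardpair$; and that $\pairing{\proj(\mset)\elt,\dualelt}=\mustandardpair(\mset)$. Since $\proj$ is regular, each $\mustandardpair$ with $\condbothpos$ is a positive regular finite Borel measure, so that the formulas \eqref{eq:open_versus_functions} and \eqref{eq:compact_versus_functions} of Theorem~\ref{thm:overview_theorem} apply to it.

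\emph{Parts (1)--(3).} For part (1), every $\phi$ in the indexing set satisfies $\phi\leq\charfunc_V$ pointwise, so positivity of $\rbt_\proj$ gives $\rct(\phi)\leq\rbt_\proj(\charfunc_V)=\proj(V)$; thus $\proj(V)$ is an upper bound. If $T\in\regularbl$ also dominates the set, then pairing against $\condbothpos$ and using \eqref{eq:weak_def} yields $\pairing{T\elt,\dualelt}\geq\intts\phi\,d\mustandardpair$ for all admissible $\phi$, and passing to the supremum over $\phi$ via \eqref{eq:open_versus_functions} gives $\pairing{T\elt,\dualelt}\geq\mustandardpair(V)=\pairing{\proj(V)\elt,\dualelt}$; as this holds for all $\condbothpos$ we conclude $T\geq\proj(V)$. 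Hence the supremum exists and equals $\proj(V)$, with no Dedekind completeness used. Part (2) is the mirror image, with $\charfunc_K\leq\phi$ making $\proj(K)$ a lower bound and \eqref{eq:compact_versus_functions} replacing \eqref{eq:open_versus_functions}. For part (3), the set from part (1) with $V=\ts$ sits inside $\{\rct(\phi):\phi\in\contots,\,\zerofunctionts\leq\phi\leq\onefunctionts\}$, while $\proj(\ts)$ still dominates the larger set because $\phi\leq\onefunctionts$ forces $\rct(\phi)\leq\proj(\ts)$; any upper bound of the larger set thus dominates the smaller one and hence exceeds $\proj(\ts)$ by part (1).

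\emph{Parts (4) and (5).} For the equivalences I argue topologically. If $V$ is $\sigma$-compact, write $V=\bigcup_n K_n$ with $K_n$ compact and increasing, and use Urysohn's lemma for locally compact Hausdorff spaces to produce $\psi_n\in\contcts$ with $\zerofunctionts\leq\psi_n\leq\onefunctionts$, $\psi_n=1$ on $K_n$, and $\supp\psi_n\subset V$; then $\phi_n=\max(\psi_1,\dots,\psi_n)$ still lies in $\contcts$ with support in $V$ and satisfies $\phi_n\uparrow\charfunc_V$ pointwise, i.e.\ in $\boundedts$. Conversely, any sequence in $\contcts$ with $\supp\phi_n\subset V$ and $\sup_n\phi_n=\charfunc_V$ gives $V=\bigcup_n\supp\phi_n$, so $V$ is $\sigma$-compact. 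Part (5) is the case $V=\ts$, except that its reverse implication only assumes $\phi_n\in\contots$; here I use that $\phi_n$ vanishes at infinity, so each level set $\{\phi_n\geq\tfrac12\}$ is compact, and $\sup_n\phi_n=\onefunctionts$ forces $\ts=\bigcup_n\{\phi_n>\tfrac12\}$, whence $\ts$ is $\sigma$-compact. Finally, the monotone and weak-operator statements in both parts follow at once from Proposition~\ref{prop:order_continuity_is_necessary} applied to $\rbt_\proj$: a norm bounded sequence with $\phi_n\uparrow\charfunc_V$ (resp.\ $\phi_n\uparrow\onefunctionts$) in $\boundedts$ gives $\rct(\phi_n)=\rbt_\proj(\phi_n)\uparrow\rbt_\proj(\charfunc_V)=\proj(V)$ (resp.\ $\uparrow\proj(\ts)$) in $\regularbl$, with $\proj(V)$ (resp.\ $\proj(\ts)$) the corresponding weak-operator limit.

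\emph{Main obstacle.} Parts (1)--(3) are, at bottom, the same duality argument already carried out for the characteristic functions of open sets in the proof of Theorem~\ref{thm:generating_measure_KB}, so the steps needing genuine care are in part (5): one must exploit the vanishing at infinity of elements of $\contots$ to keep the level sets compact, which is precisely what allows the $\sigma$-compactness criterion there to be phrased over all of $\contots$ rather than only $\contcts$. The second point demanding attention throughout is that $\bl$ is not assumed Dedekind complete, so in parts (1)--(3) each supremum and infimum must be produced as the conclusion of the order argument rather than invoked a priori.
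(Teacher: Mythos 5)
Your proposal is correct, and for parts (1), (2), (4), and (5) it is essentially the paper's own argument: the same duality computation via \eqref{eq:weak_def} combined with \eqref{eq:open_versus_functions} and \eqref{eq:compact_versus_functions}, the same Urysohn-plus-maximum construction for the $\sigma$-compact direction, and the same appeal to Proposition~\ref{prop:order_continuity_is_necessary} for the monotone and WOT statements. The one place you genuinely diverge is part (3): the paper proves it directly, identifying $\sup \leb \intts\phi\,d\mustandardpair : \phi\in\contots,\,\zerofunctionts\leq\phi\leq\onefunctionts \rib$ with the norm of the positive functional $\phi\mapsto\intts\phi\,d\mustandardpair$ and invoking the isometry statement of the Riesz representation theorem to conclude that this equals $\norm{\mustandardpair}=\mustandardpair(\ts)=\pairing{\maxprojts\elt,\dualelt}$; you instead sandwich the part (3) set between the part (1) set for $V=\ts$ and the upper bound $\maxprojts$, so that the conclusion follows purely from part (1) and the positivity of $\erbt$ (if $A\subset B$, $\sup A$ exists, and $\sup A$ is an upper bound of $B$, then $\sup B$ exists and equals $\sup A$). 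Your route is more elementary, needing no further input from the Riesz theorem; the paper's is self-contained within part (3). Two further differences are minor but worth noting: in the converse of part (4) you use $V=\bigcup_n\supp\phi_n$ where the paper uses $V=\bigcup_n\leb \tselt\in\ts : \phi_n(\tselt)>0 \rib$ (both valid, and neither needs positivity of the $\phi_n$), and in part (5), which the paper dismisses as ``similar to part (4),'' you correctly supply the ingredient that actually makes it work for sequences in $\contots$ rather than $\contcts$: vanishing at infinity makes the level sets $\leb \tselt : \phi_n(\tselt)\geq\tfrac12 \rib$ compact, which replaces the compact-support argument.
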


\begin{proof}
Let $\rct_\proj \colon \boundedts\to\regularbl$ denote the positive representation of $\boundedts$ on $\bl$ that is generated by $\proj$ and that extends $\rct$, with associated regular finite signed Borel measures $\mustandardpair$ ($\condbothsp$). Starting with part (1), if $\phi\in\contcts$, $\supp\phi\subset V$, and $\zerofunctionts\leq\phi\leq\onefunctionts$, then $\rct(\phi)=\erbt(\phi)\leq\erbt(\charfunc_V)=\proj(V)$. Hence $\proj(V)$ is an upper bound for
\[
\leb\rct(\phi) : \phi\in\contcts,\,\supp \phi\subset V,\, \zerofunctionts\leq \phi\leq\onefunctionts\rib.
\]
If $T\in\regularbl$ is also an upper bound for this set, then, for all $\phi\in\contcts$ with $\supp\phi\subset V$ and $\zerofunctionts\leq\phi\leq\onefunctionts$, and all $\condbothpossp$, we have
\begin{align*}
\pairing{T\elt,\dualelt}&\geq\pairing{\rct(\phi)\elt,\dualelt}\\
&=\intts\phi\,d\mustandardpair,
\end{align*}
where \eqref{eq:weak_def} was used.
Therefore, for all $\elt\in\blpos,\dualelt\in\dualblpos$,
\begin{equation*}
\pairing{T\elt,\dualelt}\geq\sup\leb\intts\phi\,d\mustandardpair:\phi\in\contcts,\,\supp\phi\subset V,\, \zerofunctionts\leq\phi\leq\onefunctionts\rib.
\end{equation*}
Since by \eqref{eq:open_versus_functions} the right hand side in this equation equals $\mustandardpair(V)=\pairing{\proj(V)\elt,\dualelt}$, we conclude that $\pairing{T\elt,\dualelt}\geq\pairing{\proj(V)\elt,\dualelt}$ for all $\condbothpossp$.
Hence $T\geq\proj(V)$.

The proof of part (2) is similar, based on \eqref{eq:compact_versus_functions}.

For part (3), the same line of reasoning shows that $\maxprojset\geq\rct(\phi)$ for all $\phi\in\contots$ with $\zerofunctionts\leq \phi\leq\onefunctionts$. If $T\geq\rct(\phi)$ for all such $\phi$, then, for all $\condbothpossp$, we find
\begin{equation*}\label{eq:isometry_part}
\pairing{T\elt,\dualelt}\geq\sup\leb\intts\phi\,d\mustandardpair:\phi\in\contots,\, \zerofunctionts\leq\phi\leq\onefunctionts\rib.
\end{equation*}
But the right hand side is the norm of the positive functional $\phi\mapsto\intts\phi\,d\mustandardpair$ on $\contots$, which by the isometry statement in the Riesz representation theorem equals $\norm{\mustandardpair}=\mustandardpair(\ts)=\pairing{\maxprojts\elt,\dualelt}$. Therefore $T\geq\maxprojset$.

For part (4), if such a sequence exists, then $V=\bigcup_{n}\leb\tselt\in\ts : \phi_n(\tselt)>0\rib$ is countable union of $\sigma$-compact subsets of $\ts$, hence $\sigma$-compact. Conversely, if $V$ is $\sigma$-compact, then we may assume that $V=\bigcup_n K_n$ where $K_n\subset K_{n+1}$ for all $n$. By \cite[Corollary~4.32]{fol} we can choose $\psi_n\in\contcts$ such that $\zerofunctionts\leq\psi_n\leq\onefunctionts$, $\psi_n(\tselt)=1$ for $\tselt\in K_n$ and $\supp(\psi_n)\subset V$. Let $\phi_n=\bigvee_{k=1}^n\psi_k$. Then the sequence $\lep\phi_n\rip_{n=1}^\infty$ is as required. An appeal to Proposition~\ref{prop:order_continuity_is_necessary} concludes the proof of part (4).

The proof of part (5) is similar to that of part (4).
\end{proof}

\subsection*{Acknowledgments}
The authors thank Ben de Pagter for helpful discussions, and the anonymous referees for their remarks and suggestions.


\begin{thebibliography}{99}

\bibitem{abrali}
Y.A.\ Abramovich and C.D.\ Aliprantis, \emph{An invitation to operator theory}, American Mathematical Society, Providence, Rhode Island, 2002.

\bibitem{abrarekit}
 Y.A.Abramovich, E.L.\ Arenson and A.K.\ Kitover, \emph{Banach $C(K)$-modules and operators preserving disjointness}, Longman Scientific \& Technical, Harlow, UK, 1992.

\bibitem{aliburpri}
C.D.\ Aliprantis and O.\ Burkinshaw, \emph{Principles of real analysis}, $3^{\textup {rd}}$ ed., Academic Press, San Diego, 1998.

\bibitem{aliburpro}
C.D.\ Aliprantis and O.\ Burkinshaw, \emph{Problems in real analysis. A workbook with solutions}, $2^{\textup {nd}}$ ed., Academic
Press, San Diego, 1999.

\bibitem{aliburpos}
 C.D.\ Aliprantis and O.\ Burkinshaw, \emph{Positive operators}, Springer, Dordrecht, 2006.

\bibitem{con}
J.B.\ Conway, \emph{A course in functional analysis}, Springer, New York, 2007.

\bibitem{dieuhl}
J.\ Diestel, J.J.\ Uhl, \emph{Vector measures}, American Mathematical Society, Providence, Rhode Island, 1977.

\bibitem{dunsch1}
N.\ Dunford and J.T.\ Schwartz, \emph{Linear operators, Part I. General theory}, Interscience Publishers, New York, 1958.

\bibitem{dunsch2}
N.\ Dunford and J.T.\ Schwarz, \emph{Linear operators. Part II. Spectral theory}, Interscience Publishers, New York, 1963.

\bibitem{dunsch3}
N.\ Dunford and J.T.\ Schwarz, \emph{Linear operators. Part III. Spectral operators}, Interscience Publishers, New York, 1971.

\bibitem{fol}
G.B.\ Folland, \emph{Real analysis}, $2^{\textup {nd}}$ ed., John Wiley, New York, 1999.

\bibitem{dejjia}
M.\ de Jeu and X.\ Jiang, \emph{Positive representations of $\contots$. II.}, in preparation.

\bibitem{kai}
 S.~Kaijser, \emph{Some representation theorems for Banach lattices}, Ark.\ Math.\ {\bf 16} (1978), 179-193.

\bibitem{mey}
P.\ Meyer-Nieberg, \emph{Banach lattices}, Springer, Berlin, 1991.

\bibitem{orh}
M.\ Orhon, \emph{Boolean algebras of commuting projections}, Math.\ Z.\ {\bf 183} (1983), 531--537.

\bibitem{pagric}
 B.\ de Pagter and W.J.\ Ricker, \emph{$C(K)$-representations and $R$-boundedness}, J.\ London Math.\ Soc.\ (2) \textbf{76} (2007), 498-512.

\bibitem{ric}
W.\ Ricker, \emph{Operator algebras generated by commuting projections: a vector measure approach}, Springer, Berlin, 1999.

\bibitem{rud}
W.\ Rudin, \emph{Functional analysis}, $2^{\textup{nd}}$ ed., Tata McGraw-Hill, New Delhi, 1999.

\bibitem{schbook}
H.H.\ Schaefer, \emph{Banach lattices and positive operators}, Springer, New York, 1974.

\bibitem{wil}
D.\ Williams, \emph{Probability with martingales}, Cambridge University Press, Cambridge, 1991.

\bibitem{wnu}
W.\ Wnuk, \emph{Banach lattices with order continuous norms}, Polish Scientific Publishers PWN, Warsaw, 1999.

\bibitem{zaa}
A.C.\ Zaanen, \emph{Riesz spaces. II}, North-Holland, Amsterdam, 1983.


\end{thebibliography}
\end{document}